  \newcommand{\Z}{\ensuremath{\mathbb{Z}}}%
  \newcommand{\N}{\ensuremath{\mathbb{N}}}%
    \newcommand{\A}{\ensuremath{\mathcal{A}}}%
        \newcommand{\G}{\ensuremath{\mathcal{G}}}%
        \newcommand{\M}{\ensuremath{\mathcal{M}}}%
        \renewcommand{\H}{\ensuremath{\mathcal{H}}}%
  \newcommand{\supp}{\ensuremath{\textrm{supp}}}%
    \newcommand{\acts}{\ensuremath{\curvearrowright}}%
  \newcommand{\sub}{\ensuremath{\operatorname{Sub}}}%
    \newcommand{\aut}{\ensuremath{\operatorname{Aut}}}%
  \newcommand{\stab}{\ensuremath{\operatorname{Stab}}}%
  \newcommand{\homeo}{\ensuremath{\operatorname{Homeo}}}%
      \newcommand{\oGamma}{\ensuremath{\overline{\Gamma}}}%
\theoremstyle{definition}
  \newtheorem{defin}{Definition}[section]
\theoremstyle{plain}
  \newtheorem{thm}[defin]{Theorem}
  \newtheorem{main thm}{Theorem}
  \newtheorem{prop}[defin]{Proposition}
  \newtheorem{cor}[defin]{Corollary}
  \newtheorem{lemma}[defin]{Lemma}
\theoremstyle{remark}
  \newtheorem{remark}[defin]{Remark}
  \newtheorem{example}[defin]{Example}
  \author{Nicol\'as Matte Bon}
  \date{February 2016}
  \title{Full groups of bounded automaton groups}
\begin{document}
  \maketitle\abstract{We show that every bounded automaton group can be embedded in a finitely generated, simple amenable group. The proof is based on the study of the topological full groups associated to the  Schreier dynamical system of the mother groups. We also show that if $\G$ is a minimal \'etale groupoid with unit space the Cantor set, the group $[[\G]]_t$ generated by all torsion elements in the topological full group has simple commutator subgroup. }  
  
\section{Introduction}
Groups generated by finite automata are a classical source of groups acting on rooted trees. A well-study family of automaton groups are groups generated by bounded automata. Examples of such groups are the Grigorchuk group of intermediate growth~\cite{Grigorchuk:grigorchukgroups}, Gupta-Sidki groups~\cite{Gupta-Sidki}, the Basilica group~\cite{Grigorchuk-Zuk:basilica}, iterated monodromy groups of post-critically finite polynomials~\cite{Nekrashevych:book}. 
One feature of automata groups is that they provide many examples of amenable groups \cite{Grigorchuk:grigorchukgroups, Bartholdi-Virag:amenability, Bartholdi-Kaimanovich-Nekrashevych, Amir-Angel-Virag:linear, Juschenko-Nekrashevych-Salle:recurrentgroupoids}, that are often non-elementary amenable~\cite{Grigorchuk:grigorchukgroups, Grigorchuk-Zuk:basilica, Juschenko:nonelementary}. In particular, every bounded automaton group is amenable by a theorem of Bartholdi, Kaimanovich and Nekrashevych~\cite{Bartholdi-Kaimanovich-Nekrashevych}.

Another notion that has attracted some attention in relation to amenability is the \emph{topological full group} of  a group of homeomorphisms. Let $G\acts X$ be a countable group acting by homeomorphisms on the Cantor set. The topological full group of  $G\acts X$ is the group $[[G]]$ of all homeomorphisms of $X$ that are locally given by the action of elements of $G$. More generally a topological full group can be defined for every \emph{\'etale groupoid} $\G$  ~\cite{Matui:groupoids}  (these definitions are recalled in Section \ref{S: groupoids}). This recovers the previous definition if $\G$ is the groupoid of germs of the action $G\acts X$. Topological full groups were first defined and studied by Giordano, Putnam and Skau in~\cite{Giordano-Putnam-Skau:flipconjugacy} in the case of minimal $\Z$-actions on the Cantor set.  The commutator subgroups of topological full groups of minimal $\Z$-subhifts provided the first examples of finitely generated infinite simple groups that are amenable (their simplicity and finite generation was established by Matui \cite{Matui:simple} and their amenability by Juschenko and Monod~\cite{Juschenko-Monod:simpleamenable}.  Amenability of some topological full groups that are not given by $\Z$-actions was established in \cite{Juschenko-Nekrashevych-Salle:recurrentgroupoids, ExtAmen}. Recently new examples were given by Nekrashevych  in \cite{Nekrashevych:periodic}, who obtained the first examples of infinite, finitely generated, simple periodic groups that are amenable.\\

Many  connections between the theory of automata groups and topological full groups exist. This was first noticed by Nekrashevych in~\cite{Nekrashevych:inversesemigroups} and became more apparent recently, see ~\cite{Juschenko-Nekrashevych-Salle:recurrentgroupoids, GrigorchukTopFull}. One motivation for this paper is to further explore this connection.

 Let $G$ be a group acting level-transitively on a regular rooted tree $T_d$.  It is well-known that the topological full group of the action $G\acts \partial T_d$ on the boundary of the rooted tree does not provide any substantially new example of finitely generated groups (more precisely, all its finitely generated subgroups embed in a finite extension of a direct power of $G$).

We consider instead the topological full group of the \emph{Schreier dynamical system} of $G$ in the sense of Grigorchuk. In the terminology of Glasner and Weiss  \cite{Glasner-Weiss:URS}, this can be defined as  the  \emph{uniformly recurrent subgroup} (URS) arising from the \emph{stability system} of  the action on the boundary of the rooted tree $\partial T_d$.  Namely, consider the stabiliser map 
\[\stab\colon \partial T_d\to \sub(G)\]
where $\sub(G)$ is the space of subgroups of $G$, endowed with the Chabauty topology. This map is not, in general, continuous.  Let $Y\subset \partial T_d$ be the set of continuity points of $\stab$ (which is always a $G_\delta$-dense subset of $\partial T_d$), and let $X\subset \sub{(G)}$ be the closure of the image of $Y$.  Then $G$  acts continuously on $X$ by conjugation. The action $G\acts X$ is called the \emph{Schreier dynamical system}Ê of $G$.

We define a full group $[[G]]$ associated to $G$ as the topological full group of the Schreier dynamical system $G\acts X$. 

As a consequence of the results from Sections \ref{S: amenability} and \ref{S: finite generation}, we have:

\begin{thm}\label{T: main}
Every group generated by a bounded activity automaton can be embedded in a finitely generated, simple amenable group.
\end{thm}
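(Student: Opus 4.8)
The plan is to build, for any given bounded automaton group $G$, a finitely generated simple amenable group that contains $G$. The natural vehicle is the full group $[[G]]$ associated to the Schreier dynamical system $G \acts X$ introduced in the excerpt. The proof should proceed by assembling four ingredients, each of which I expect to be the content of the sections referenced in the statement. First, I would produce an embedding of $G$ itself into $[[G]]$, or into a closely related group such as its commutator subgroup or the subgroup generated by torsion elements $[[\G]]_t$; the point here is that the Schreier system faithfully records the action, so the original group sits inside its own topological full group. Second, I would establish amenability of $[[G]]$: this is where the hypothesis that $G$ is generated by a \emph{bounded} automaton is essential, since boundedness is exactly the condition that makes the associated groupoid (or Schreier system) recurrent/amenable in the sense of the Juschenko--Nekrashevych--Salle framework. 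I expect this to be the substantive analytic input of Section \ref{S: amenability}.

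Third, I would address \textbf{simplicity}. The abstract tells me the main structural tool: for a minimal \'etale groupoid $\G$ with Cantor unit space, the subgroup $[[\G]]_t$ generated by torsion elements has simple commutator subgroup. So the strategy is to verify that the Schreier groupoid attached to $G$ is minimal with Cantor unit space $X$, invoke this general result to extract a simple group $S = [[\G]]_t'$ (the commutator subgroup), and then ensure that the embedding from the first step actually lands inside $S$, or can be arranged to do so after passing to commutators and torsion elements. Fourth, I would establish \textbf{finite generation} of the relevant simple group; this is the job of Section \ref{S: finite generation}, and for topological full groups of this type finite generation typically follows from minimality together with an expansivity or finiteness property of the groupoid, proved by exhibiting an explicit finite generating set adapted to a clopen partition of $X$.

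The main obstacle, I expect, is the simultaneous satisfaction of all four demands by a \emph{single} group. Amenability, simplicity, finite generation, and containing $G$ are individually attainable, but the tension is real: the simple group one extracts is a derived or torsion subgroup $[[\G]]_t'$, and one must check that $G$ embeds into precisely this subgroup rather than merely into the ambient full group. Reconciling this requires understanding how elements of $G$ decompose in $[[\G]]$ — in particular whether they are products of torsion elements and lie in the commutator subgroup — which may force a preliminary modification (for instance, passing from $G$ to a group commensurable to it, or embedding $G$ into a product $G \times G$ first so as to kill abelianization obstructions). I would therefore spend the most care on the embedding step, making it compatible with the torsion-and-commutator structure dictated by the simplicity theorem, and only then knit together amenability (Section \ref{S: amenability}) and finite generation (Section \ref{S: finite generation}) as the concluding assembly.
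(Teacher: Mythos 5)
Your outline correctly identifies the four ingredients (embedding, amenability, simplicity, finite generation), but it applies them to the wrong group, and this creates a genuine gap. You propose to carry out the whole program inside $[[G]]$, the full group of the Schreier dynamical system of the \emph{given} bounded automaton group $G$. The fatal step is finite generation: the paper proves (Lemma \ref{L: basilica}) that if $G$ acts topologically regularly on $\partial T_d$ --- which happens, e.g., for the Basilica group --- then the Schreier dynamical system is a generalized odometer rather than a subshift, and consequently $[[G]]_t'$ is \emph{not} finitely generated unless $X$ is finite. So the expansivity/subshift property you hope to invoke in your fourth step provably fails for some bounded automaton groups, and no amount of care in the other three steps can repair this: for such $G$ the group $[[G]]_t'$ simply is not the finitely generated simple amenable group you need.

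The paper's essential move, which your proposal omits, is a preliminary embedding of $G$ into the (alternating) \emph{mother group} $M$ (Theorem \ref{T: mothergroup contains all}): every bounded automaton group embeds in $M$, so it suffices to treat this single group. The program is then run for $M$ specifically, exploiting its special structure: the Schreier dynamical system $M\acts X$ admits a Bratteli diagram representation by homeomorphisms of bounded type with trivial isotropy, giving amenability of $[[M]]$ via Juschenko--Nekrashevych--de la Salle (Section \ref{S: amenability}); $M\acts X$ \emph{is} conjugate to a subshift (Lemma \ref{L: subshift}), which together with the Gray-code combinatorics yields finite generation of $[[M]]_t'$ (Section \ref{S: finite generation}); and $M$ embeds into $[[M]]_t'$ because its generating subgroups $A$ and $B$ are built from alternating groups, so every generator is a commutator of torsion elements. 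Note also that the tension you flagged --- arranging the embedding to land in the torsion-commutator subgroup --- is resolved by this same alternating structure of $M$, not by passing to $G\times G$ or a commensurable group; your proposed fixes address the wrong obstruction, while the real one (finite generation) is exactly why, as the paper remarks, ``the preliminary embedding in the mother group cannot be avoided.''
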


The proof is based on a detailed study of $[[G]]$ when  $G$ is one of the bounded activity \emph{mother groups},  a family of bounded automaton groups introduced in  \cite{Bartholdi-Kaimanovich-Nekrashevych, Amir-Angel-Virag:linear} that contain all other bounded automaton groups as subgroups. We show that $[[G]]$ is amenable and admits a finitely generated, infinite simple subgroup that contains $G$. The preliminary embedding in the mother group cannot be avoided to obtain a finitely generated group, as finite generation does not hold for some different choices of $G$ (see Lemma \ref{L: basilica}).\\

We also study simplicity of subgroups of topological full groups for more general group actions and \'etale groupoids.  Matui proves in \cite{Matui:simple, Matui:groupoids}Ê  that, for some classes of group actions and groupoids,   the topological full group has simple commutator subgroup (in particular this holds for any minimal $\Z^d$-action on the Cantor set, and more generally for every \emph{almost finite}Ê and \emph{purely infinite} Êminimal \'etale groupoid, see \cite{Matui:groupoids}). It is not known whether this holds true for every minimal action of a countable group on the Cantor set (more generally for every minimal \'etale groupoid with Cantor set unit space).

Given an \'etale groupoid $\G$ with Cantor set unit space (e.g. the groupoid of germs of a group action on the Cantor set $G\acts X$), we denote by $[[\G]]_t$ the subgroup of the topological full group generated by torsion elements. We show the following result:

\begin{thm}[Theorem \ref{T: simplicity}] \label{T: main 2}
Let $\G$ be a minimal \'etale groupoid with unit space the Cantor set. Then $[[\G]]_t'$ is simple.
\end{thm}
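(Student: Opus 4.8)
The plan is to fix a nontrivial normal subgroup $N \trianglelefteq [[\G]]_t'$ and prove that $N = [[\G]]_t'$, following the general strategy of Matui for commutator subgroups of full groups, but carried out entirely inside the torsion-generated subgroup. The first step is to single out a convenient generating set of $[[\G]]_t'$. Recall that $[[\G]]_t$ is generated by finite-order elements, the most basic of which are built from a tuple of disjoint bisections cyclically permuting a tuple of disjoint compact-open subsets of the unit space; I will call the order-$3$ elements of this kind \emph{clopen $3$-cycles}. Since a clopen $3$-cycle is a commutator of two such involutions and, conversely, commutators of cyclic torsion elements decompose into products of clopen $3$-cycles, the group $[[\G]]_t'$ is generated by clopen $3$-cycles. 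This is precisely where the restriction to torsion elements is used: the generators and all the conjugators appearing below are themselves torsion, so every manipulation stays inside $[[\G]]_t'$, whereas for the full commutator subgroup $[[\G]]'$ no such clean generating set is available.

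Next I would establish a \emph{localisation} statement: for every nonempty compact-open set $U$, the subgroup $N$ contains a nontrivial element supported in $U$. Given $1\neq h\in N$, since $h$ is not the unit there is, after shrinking, a nonempty compact-open $V$ with $hV\cap V=\emptyset$. Choose clopen $3$-cycles $a,b$ supported in a small clopen subset of $V$ with $[a^{-1},b]\neq 1$, which is possible because the unit space is perfect. As $hah^{-1}$ is supported in $hV$ and therefore commutes with everything supported in $V$, a direct computation gives
\[
\bigl[\,[h,a],\,b\,\bigr]=[a^{-1},b],
\]
which lies in $N$ and is a nontrivial product of clopen $3$-cycles supported in $V$. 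Transporting $V$ into a prescribed $U$ by a bisection (available by minimality) and conjugating by the corresponding torsion element then yields a nontrivial element of $N$ supported in $U$.

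With a nontrivial torsion element $s\in N$ supported in an arbitrarily small clopen $U$ at hand, the argument becomes an analogue of the simplicity of finite alternating groups. Refining $U$ into many disjoint clopen pieces (again using perfectness) and conjugating $s$ by clopen $3$-cycles supported in $U$, I would show that the normal closure of $s$ inside the group of torsion elements supported in $U$ contains a clopen $3$-cycle. Finally, minimality of $\G$ provides, for every clopen $3$-cycle $c$, a torsion element (a product of bisection-supported involutions) conjugating a $3$-cycle supported in $U$ to $c$; since the conjugators are torsion and clopen $3$-cycles generate $[[\G]]_t'$, it follows that $N$ contains every clopen $3$-cycle, whence $N=[[\G]]_t'$.

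The main obstacle I anticipate is the interplay between the two roles of $U$ in the localisation and alternating steps: one must simultaneously make the support small, keep the extracted element nontrivial, and ensure that every commutator and conjugator remains in $[[\G]]_t'$ rather than the ambient $[[\G]]$. The local alternating analysis—showing that the normal closure of any nontrivial torsion element on $U$ contains a $3$-cycle—together with the careful use of disjoint-support commutators is the technical heart, as is verifying that minimality supplies conjugating bisections realised by genuine torsion elements.
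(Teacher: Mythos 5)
Your localisation step is sound: the identity $\bigl[[h,a],b\bigr]=[a^{-1},b]$ for disjointly supported data is exactly the commutator trick that the paper uses in Proposition \ref{P: normal contains commutator}. But the architecture around it has two genuine gaps. First, your starting point --- that clopen $3$-cycles generate $[[\G]]_t'$ --- is asserted on the grounds that ``commutators of cyclic torsion elements decompose into products of clopen $3$-cycles''. This does not suffice: $[[\G]]_t'$ is the normal closure of commutators of \emph{arbitrary} torsion elements, and when $\G$ has nontrivial isotropy a torsion element of $[[\G]]$ need not be a cycle, nor visibly a product of cycles, because its fixed-point set need not be clopen; a cycle, by contrast, has clopen support and acts freely on it. (For instance the involution $b$ of the Grigorchuk group, viewed in the full group of the groupoid of germs of its boundary action, has non-clopen support, so it is not a cycle and no decomposition into cycles is apparent.) In the paper, generation of $[[\G]]_t'$ by $3$-cycles is not an input but an \emph{output} of the main machinery: the subgroup generated by $3$-cycles is a nontrivial normal subgroup of $[[\G]]_t$, and Proposition \ref{P: normal contains commutator} together with Lemma \ref{L: infinitesimally generated} is needed to conclude that it contains $[[\G]]_t'$. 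You cannot take it for free.

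Second, and fatally, your last step is false. You claim that minimality provides, for every clopen $3$-cycle $c$, a torsion conjugator carrying some $3$-cycle supported in $U$ to $c$. Conjugation carries supports onto supports ($kc'k^{-1}=c$ forces $k(\supp(c'))=\supp(c)$), and every element of $[[\G]]$ preserves every $\G$-invariant probability measure; many groupoids covered by the theorem carry such a measure. Concretely, in the $3$-adic odometer the element cyclically permuting the three clopen residue classes mod $3$ is a clopen $3$-cycle with full support, hence is not conjugate in $[[\G]]$ to anything supported in a proper clopen subset. (A smaller problem of the same kind occurs earlier: $N$ is normal only in $[[\G]]_t'$, so the conjugators used to transport localised elements must themselves lie in $[[\G]]_t'$, e.g.\ $3$-cycles rather than involutions.) The missing idea is to argue in the opposite direction: rather than conjugating generators into $U$, decompose arbitrary $g,h\in[[\G]]_t$ as $g=g_1\cdots g_m$ and $h=h_1\cdots h_n$ so that each union $\supp(g_i)\cup\supp(h_j)$ can be moved into $U$ by a \emph{single} element of the group. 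This is the ``doubly infinitesimally generated'' property, proved in Lemma \ref{L: infinitesimally generated} by covering $X$ with $g$-invariant clopen sets individually movable into $U$ (minimality plus compactness) and refining to a partition. Then $[g,h]$ lies in the normal closure of the commutators $[g_i,h_j]$, and your localisation computation applies to each piece; this decomposition, not conjugation of generators, is how the paper closes the argument, and it is the step your proposal lacks.
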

Here $[[\G]]_t'$ denotes the derived subgroup of $[[\G]]_t$. It is not known whether there exists $\G$ as in the statement, for which the inclusion $[[\G]]_t'\le[[\G]]'$ is strict. 

A very similar result has been recently shown by Nekrashevych in \cite{Nekrashevych:groupoidsimple}, and appeared while the writing of this paper was being completed\footnote{In a preliminary version of this paper, Theorem \ref{T: main 2} was proven under an additional assumption on $\G$, that was removed in the present version.}Ê. He defines a subgroup $A(\G)$ of $[[\G]]$ analogous to the alternating group, and shows that $A(\G)$ is simple if $\G$ is minimal,  and that $A(\G)$ is finitely generated if $\G$ is expansive.  We refer the reader to \cite{Nekrashevych:groupoidsimple}Ê for the definition of $A(\G)$. Since it is apparent from the definitions that $A(\G)\unlhd [[\G]]_t'$, it follows from Theorem \ref{T: main 2} that $A(\G)= [[\G]]_t'$ if $\G$ is minimal.

 This paper is structured as follows. In Section 2 we recall preliminaries on \'etale groupoids and their topological full groups, and prove Theorem \ref{T: main 2}. In Section 3 we recall preliminaries on groups acting on rooted trees and their Schreier dynamical systems. In Section 4 we study the Schreier dynamical system of the alternating mother groups $M\acts X$. We show that this action can be efficiently encoded by a Bratteli diagram representation. Combined with a result from \cite{Juschenko-Nekrashevych-Salle:recurrentgroupoids} Êthis allows to show that $[[M]]$ is amenable. Finally in Section 5 we show that $[[M]]_t'$ is finitely generated.

   \paragraph*{Acknowledgements}
   I am grateful to Anna Erschler for many discussions, and to G. Amir for a very useful conversation. I thank R. Grigorchuk, H. Matui and V. Nekrashevych for useful comments on a previous version. Part of this work was completed at the Bernoulli Center during the trimester ``Analytic and Geometric Aspects of Probability on Graphs'', the author acknowledges the CIB and the organizers of the program.
\section{\'Etale groupoids and topological full groups} \label{S: groupoids}
\subsection{Preliminary notions}
A \emph{groupoid} $\G$ is a small category (more precisely, its set of morphisms) in which every morphism is an isomorphism. Recall that a category is said to be \emph{small}Ê if the collection of its objects and morphisms are sets. 
The set of objects of a groupoid is called its \emph{unit space}.

Throughout the section let $\G$ be a groupoid and $X$ be its unit space.
For every $\gamma \in \G$ the initial and final object of $\gamma$ are denoted $s(\gamma)$ and $r(\gamma)$. The maps $s, r: \G\to X$ are called the \emph{source} and the \emph{range} maps. Thus, the product of $\gamma, \delta \in \G$ is defined if and only if $r(\delta)=s(\gamma)$. In this case we have $s(\gamma\delta)=s(\delta)$ and $r(\gamma\delta)= r(\gamma)$. We will systematically identify $X$ with a subset of $\G$ via the identification $x\mapsto \operatorname{Id}_x$ (the identity isomorphism of the object $x$).

An \emph{\'etale groupoid} is a  groupoid $\G$ endowed with a second countable locally compact groupoid topology so that the range and source maps $r,s\colon \G\to X$ are open local homeomorphism. Note that we do not require $\G$ to be Hausdorff.

\begin{example}\label{E: action groupoid}
Let $G$ be a countable group acting on a topological space $X$ by homeomorphisms. The \emph{action groupoid} is the groupoid $\G=G\times X$. The product of $(g,x)$ and $(h,y)$ is defined if and only if $hy=x$ and in this case $(g, x)(h,y)=(gh, y)$. The unit space of $\G$ naturally identifies with $X$. The range and source maps are given by $s(g, x)=x$ and $r(g, x)=gx$. The topology on $\G$ is the product topology on $G\times X$, where $G$ is endowed with the discrete topology. This topology makes $\G$ a Hausdorff \'etale groupoid.
\end{example}

\begin{example}\label{E: groupoid of germs}
Let again $G$ be a countable group acting on a topological space $X$ by homeomorphisms. Given $g\in G$ and $x\in X$ the \emph{germ} of $g$ at $x$ is denoted $\operatorname{germ}(g, x)$.  Germs of elements of $G$ form a groupoid $\G$, the \emph{groupoid of germs} of the action. The groupoid of germs is naturally endowed with a topology where a basis for open sets is given by sets of the form 
\[ U(g, V)= \{ \operatorname{germ}(g, x)\: : x\in V\},\]
where $g\in G$ is fixed and $V\subset X$ is an open subset. This topology makes $\G$ an \'etale groupoid, which is non-Hausdorff in many interesting cases. 

\end{example}

\begin{example}\label{E: pseudogroup groupoid}
More generally let $\mathscr{F}$ be a \emph{pseudogroup} of \emph{partial homeomorphisms} of a topological space $X$. A partial homeomorphism of $X$ is a homeomorphisms between open subsets of $X$ 
\[\tau\colon U\to V,\quad U, V\subset X \text{ open }\] 
where $U$ and $V$ are called the \emph{domain} and the \emph{range} of $\tau$.
A pseudogroup $\mathscr{F}$ is a collection of partial homeomorphisms which is closed under taking inverses, taking restriction to an open subset of $U$, taking composition on the locus where it is defined, and verifying the following: 
whenever $\tau$ is a partial homeomorphism of $X$ as above and the domain $U$ admits a covering by open subsets $U=\cup_{i\in I} U_i$ so that $\tau|_{U_i}\in \mathscr{F}$ for every $i\in I$ then $\tau \in \mathscr{F}$.
One can naturally associate to $\mathscr{F}$ a \emph{groupoid of germs}  $\G$, and endow it with a topology in the same way as in Example \ref{E: groupoid of germs}, which makes it an \'etale groupoid.
\end{example}

Two  \'etale groupoids are said to be \emph{isomorphic} if they are isomorphic as topological groupoids.

An \'etale groupoid is said to be \emph{minimal} if it acts minimally on its unit space (i.e. every orbit is dense in $X$).

Let $\G$ be an \'etale groupoid. A \emph{bisection} is an open subset $\mathcal U\subset \G$ so that the source and range maps $s\colon\mathcal U\to s(\mathcal U)$ and $r\colon \mathcal U\to r(\mathcal U)$ are homeomorphism onto their image. To any bisection $\mathcal U$ one can associate a partial homeomorphism of $X$ (i.e. a homeomorphism between open subsets of $X$) given by

\[\tau_{\mathcal{U}}:=r\circ s^{-1}\colon s(\mathcal U)\to r(\mathcal U).\]

A bisection $\mathcal U$ is said to be \emph{full} if $s(\mathcal U)=r(\mathcal U)=X$.

The \emph{topological full group} of an \'etale groupoid $\G$ is the group 
\[ [[\G]]=\{\tau_{\mathcal U}\: : \: \mathcal U\subset \G\: \text{ full bisection }\}\le \homeo(X).\]

\begin{example}
Let $\G$ be either an action groupoid as in Example \ref{E: action groupoid}, or a groupoid of germs as in Example \ref{E: groupoid of germs}. Then $[[\G]]$ coincides with the group of homeomorphisms $h$  of $X$ the following property: for every $x\in X$ there exists a neighbourhood $V$ of $x$ and an element $g\in G$ so that $h|_V=g|_V$. 

\end{example}

\begin{example}
Let $\G$ be the groupoid of germs associated to a pseudogroup $\mathscr{F}$ as in Example \ref{E: pseudogroup groupoid}. Then $[[\G]]$ coincides with the set of all elements of $\mathscr{F}$ whose domain and range are equal to $X$.
\end{example}

For a groupoid $\G$ with unit space $X$ the set $\G_x=\{\gamma \in \G\: : \: s(\gamma)=r(\gamma)=x\}$ forms a group, the \emph{isotropy group} at the point $x\in X$. A groupoid is said to be \emph{principal} (or an \emph{equivalence relation}) if $\G_x=\{\operatorname{Id}_x\}$ for every $x\in X$.

From now on, all the groupoids that we consider have a unit space $X$ homeomorphic to the Cantor set.\\

An \emph{elementary subgroupoid}  $\mathcal{K}\le \G$ is a principal compact open subgroupoid with unit space $X$. 

An \'etale groupoid is said to be an \emph{AF-groupoid}  if it is a countable union of elementary subgroupoids. In particular, AF-groupoids are principal. 

We now recall basic concepts concerning \emph{Bratteli diagrams} and their relation to AF-groupoids.

A \emph{Bratteli diagram} is a labelled directed graph $B$, possibly with multiple edges, whose vertex set $V$ is a disjoint union of finite \emph{levels} $V_n$, $n\ge 0$. The level $V_0$ contains only one vertex, called the \emph{top vertex}. Every edge $e$ connects a vertex in a level $V_n$ to a vertex in the next level $V_{n+1}$ for some $n\ge 0$. The starting vertex and the ending vertex of an edge $e$ are called the \emph{origin} and \emph{target} of $e$ and are denoted $o(e)$ and $t(e)$ respectively. We denote $E_n$ the set of edges going from vertices in $V_n$ to vertices in $V_{n+1}$. 

The \emph{path space} of a Bratteli diagram $B$ is the set of infinite directed paths in $B$, i.e.
\[X_B=\{e_0e_1e_2\cdots \: : \: e_i\in E_i, \: t(e_i)=o(e_{i+1})\}.\]
It is endowed with the topology induced by the product of the discrete topology on each $E_i$.

A  Bratteli diagram is said to be \emph{simple} if for every $n$ and every vertex $v\in V_n$ there exists $m>n$ so that $v$ is connected by a path to every vertex of $V_m$.

To a Bratteli diagram one can associate an AF-groupoid $\H_B$, the \emph{tail groupoid}, as follows.

Let $\gamma=f_0f_1\cdots f_n$ be a path in $B$ starting at the top vertex, i.e. $f_i\in E_i$ and $t(f_i)=o(f_{i+1})$. We denote $t(\gamma)=t(f_n)$.  The sets of infinite paths in $X_B$ starting with $\gamma$ is called a \emph{cylinder subset} of $X_B$ and is denoted $C_{\gamma}$. Clearly cylinders subsets are clopen and form a basis for the topology on $X_B$.

Given $v\in V_n$, the \emph{tower} corresponding to $v$, denoted $T_v$, is the collection of all cylinder subsets $C_\gamma$ so that $t(\gamma)=v$.

Let $C_\gamma, C_{\gamma'}$ be two cylinders in the same tower. Then there is a partial homeomorphism of $X_B$ with domain $C_\gamma$ and range $C_{\gamma'}$ given by 
\begin{align*}
\tau_{\gamma, \gamma'}\colon C_\gamma&\to C_{\gamma'}\\
 \gamma e_{n+1}e_{n+2}\cdots &\mapsto \gamma'e_{n+1}e_{n+2}\cdots.
\end{align*}

Let $\mathscr{F}_B$ be the pseudogroup generated by all partial homeomorphisms of this form. The groupoid of germs of this pseudogroup, endowed with a topology as in Example \ref{E: pseudogroup groupoid}, is called the \emph{tail groupoid} of $B$ and it is denoted $\H_B$.  
The tail groupoid is an increasing union of the subgroupoid $\H_B^{(n)}$ consisting of all germs of partial homeomorphisms of the form $\tau_{\gamma, \gamma'}$ where $\gamma$ and $\gamma'$ have length at most $n$. It is easy to check that each $\H^{(n)}_B$ is an elementary subgroupoid in $\H_B$. In particular $\H_B$ is an AF-groupoid. It is not difficult to check that the groupoid $\H_B$ is minimal if and only if the diagram $B$ is simple.

The groups $H_n=[[\H_B^{(n)}]]$ are finite and are isomorphic to a direct product of symmetric groups. The group $[[\H]]'$  is simple if and only if $B$ is simple, see \cite{Matui:simple}.

The following fundamental result says that every minimal AF-groupoid arises in this way.

\begin{thm}[Giordano--Putnam--Skau \cite{Giordano-Putnam-Skau:affable}]\label{T: AF equivalences}
Let $\G$ be a minimal \'etale groupoid with unit space the Cantor set. The following are equivalent.
\begin{itemize}
\item[(i)] $\G$ is an AF-groupoid;
\item[(ii)] there exists a simple Bratteli diagram $B$ so that $\G$ is isomorphic to $\H_B$.

\end{itemize}
Moreover in this case the topological full group $[[\G]]$ is locally finite.
\end{thm}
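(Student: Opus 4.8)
The implication (ii)$\Rightarrow$(i), together with the equivalence between minimality of $\G$ and simplicity of $B$, has essentially been established in the discussion preceding the statement: the tail groupoid $\H_B$ is by construction the increasing union of the elementary subgroupoids $\H_B^{(n)}$, hence an AF-groupoid, and it is minimal precisely when $B$ is simple. So the plan is to concentrate on the converse (i)$\Rightarrow$(ii), and then to deduce the final ``moreover'' clause.

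For (i)$\Rightarrow$(ii), I would start by writing the AF-groupoid as an increasing union $\G=\bigcup_{n\ge 0}\mathcal{K}_n$ of elementary subgroupoids, and I may assume $\mathcal{K}_0=X$. The first key step is a structural lemma describing a single elementary subgroupoid: any principal compact open subgroupoid $\mathcal{K}\le\G$ with unit space the Cantor set $X$ decomposes $X$ into finitely many clopen \emph{towers}. Concretely, since $\mathcal{K}$ is compact and \'etale its source fibres are finite, so its orbits are finite and of locally constant size; grouping points according to the isomorphism type of the local $\mathcal{K}$-structure produces a finite clopen partition $X=\bigsqcup_{v\in V}\bigsqcup_{i=1}^{h_v}X_{v,i}$ in which $\mathcal{K}$ is generated by the canonical homeomorphisms $X_{v,i}\to X_{v,j}$ matching the floors inside each tower. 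In other words each $\mathcal{K}_n$ is isomorphic to the level-$n$ piece of a depth-one diagram, and the partition into floors, call it $\mathcal{P}_n$, records all of its combinatorial data.

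The second, and I expect most delicate, step is to make these tower partitions compatible. Using $\mathcal{K}_n\le\mathcal{K}_{n+1}$ I would arrange that every floor of a $\mathcal{K}_{n+1}$-tower is a union of floors of $\mathcal{K}_n$-towers, so that $\mathcal{P}_{n+1}$ refines $\mathcal{P}_n$; this may require passing to a subsequence of the $\mathcal{K}_n$ and enlarging each by intersecting with a fixed refining sequence of clopen partitions that generates the topology of $X$ (available since $X$ is the Cantor set). The point of the latter is to guarantee that the floor-partitions $\mathcal{P}_n$ separate points, which is what will make the resulting path space homeomorphic to $X$ rather than to a quotient. With this in hand I would define the Bratteli diagram $B$ by setting $V_n$ to be the set of towers of $\mathcal{K}_n$ and declaring the number of edges from $v\in V_n$ to $w\in V_{n+1}$ to be the number of $v$-floors contained in a single $w$-floor. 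The nested floors then define a homeomorphism $\phi\colon X_B\to X$, sending an infinite path to the unique point in the intersection of the corresponding decreasing sequence of floors.

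It remains to check that $\phi$ conjugates $\H_B$ to $\G$, and here the main content is matching germs: the generators $\tau_{\gamma,\gamma'}$ of the pseudogroup $\mathscr{F}_B$ correspond under $\phi$ exactly to the floor-matching partial homeomorphisms of some $\mathcal{K}_n$, whose germs generate $\G=\bigcup_n\mathcal{K}_n$; checking that the resulting bijection of germs is an isomorphism of topological groupoids uses that both topologies are generated by the cylinder, respectively floor, bisections. The main obstacle throughout is precisely this bookkeeping --- simultaneously achieving nestedness of the towers, topology-generation, and compatibility with the groupoid structure --- rather than any single hard estimate. Finally, for the ``moreover'' clause, I would argue that $[[\H_B]]=\bigcup_n H_n$ with $H_n=[[\H_B^{(n)}]]$ finite: any full bisection $\mathcal U$ is, by compactness of $X$ and the local form of germs, a finite union of pieces of the form $\tau_{\gamma,\gamma'}$ with $|\gamma|,|\gamma'|\le n$ for some $n$, so $\tau_{\mathcal U}\in H_n$; being an increasing union of finite groups, $[[\G]]\cong[[\H_B]]$ is locally finite.
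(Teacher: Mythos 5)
There is nothing to compare against inside the paper: Theorem \ref{T: AF equivalences} is imported from Giordano--Putnam--Skau and is never proved in the text (the surrounding discussion only sets up the notions of towers, $\H_B^{(n)}$, and the remark that $\H_B$ is minimal iff $B$ is simple). So your proposal has to be judged against the standard literature argument --- and that is exactly the route you take: clopen Kakutani--Rokhlin tower decompositions of elementary subgroupoids, nesting of the tower partitions, reading off a Bratteli diagram, matching germs, and obtaining local finiteness from $[[\H_B]]=\bigcup_n H_n$ with each $H_n$ finite. The implication (ii)$\Rightarrow$(i), the reduction of simplicity of $B$ to minimality of $\G$, and the ``moreover'' clause are handled correctly.

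However, in the core step (i)$\Rightarrow$(ii) you have two directional errors which, taken literally, break the construction. First, the nesting is backwards: since $\mathcal{K}_n\le\mathcal{K}_{n+1}$, the $\mathcal{K}_{n+1}$-orbits are \emph{larger}, so the towers get taller and their floors get \emph{thinner}; what you must arrange is that every floor of a $\mathcal{K}_n$-tower is a union of floors of $\mathcal{K}_{n+1}$-towers, not the other way around as you wrote. This is the only orientation compatible with your own later steps, namely defining $\phi$ via intersections of \emph{decreasing} sequences of floors and asking the floor partitions to separate points; as stated, your premise and your conclusion about which partition refines which contradict each other. Second, the edge count is transposed and in fact not well-defined as written: the number of edges from $v\in V_n$ to $w\in V_{n+1}$ should be the number of $w$-floors contained in one (equivalently, any) $v$-floor, i.e.\ the number of $\mathcal{K}_n$-orbits of type $v$ inside a single $\mathcal{K}_{n+1}$-orbit of type $w$. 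Your count (``the number of $v$-floors contained in a single $w$-floor'') depends on the choice of $w$-floor: if $\mathcal{K}_n$ is the unit space with two height-one towers $X_1\sqcup X_2$ and $\mathcal{K}_{n+1}$ is the equivalence relation of a homeomorphism $X_1\to X_2$ (one tower $w$ of height two with floors $X_1,X_2$), then one $w$-floor contains one $v_1$-floor and the other contains none, while the correct count is one edge from each of $v_1,v_2$ to $w$. Both slips are fixable by reversing the orientation consistently, after which your outline coincides with the Giordano--Putnam--Skau/Krieger argument; but as submitted the write-up is internally inconsistent, so these points do need to be repaired rather than waved through.
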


\subsection{A characteristic simple subgroup of topological full groups}
Given an \'etale groupoid $\G$, we denote by $[[\G]]_t< [[\G]]$ the subgroup generated by torsion elements, and by $[[\G]]'_t$ the derived subgroup of $[[\G]]_t$.

\begin{thm}\label{T: simplicity}
Let $\G$ be a minimal groupoid with compact totally disconnected unit space $X$. Then $[[\G]]_t'$ is simple.
\end{thm}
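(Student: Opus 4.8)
The plan is to prove simplicity of $[[\G]]'_t$ by establishing the two standard ingredients for simplicity of a commutator subgroup: first, that $[[\G]]_t'$ is generated by a well-understood class of ``small'' torsion elements supported on clopen bisections, and second, a commutator/conjugation argument showing that the normal closure of any nontrivial element of $[[\G]]_t'$ exhausts the whole group. The key structural tool I would exploit is minimality of $\G$: for any clopen set $U \subset X$ and any point, minimality lets me find group elements (bisections) moving small clopen neighborhoods around the unit space so that any two clopen sets of ``comparable type'' can be interchanged, and disjoint copies of a given configuration can be produced. This is the replacement for the usual dynamical flexibility one has in the $\Z$-action case.

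\textbf{Step 1 (Generators).} I would first identify a convenient generating set for $[[\G]]_t$. Every torsion element of $[[\G]]$ decomposes, via its action on $X$, into pieces supported on clopen sets; the basic torsion elements are finite-order bisection elements that permute finitely many disjoint clopen sets cyclically (these are the analogues of transpositions/cycles in $\Sym$). Using the groupoid structure and the local form of elements of $[[\G]]$, I would show $[[\G]]_t$ is generated by such ``elementary cycles.'' Passing to the commutator subgroup, $[[\G]]_t'$ is generated by products that, restricted to small clopen sets, look like even permutations — so morally $[[\G]]_t'$ plays the role of an ``infinite alternating group'' built from these local cyclic symmetries, which matches the remark that $A(\G) \unlhd [[\G]]_t'$.

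\textbf{Step 2 (Normal generation / the simplicity argument).} The heart of the proof is to take an arbitrary nontrivial $g \in [[\G]]_t'$ and show its normal closure in $[[\G]]_t'$ is everything. Since $g \ne \mathrm{Id}$, there is a point $x$ with $g(x) \ne x$, hence a clopen set $U$ with $g(U) \cap U = \emptyset$. Using minimality I would produce enough disjoint clopen translates of $U$ to form small alternating-type elements supported there, and then run the classical commutator trick: for a suitable $h$ supported in $U \cup g(U)$, the commutator $[g,h]$ is a nontrivial element supported on a controlled clopen set, and by conjugating and multiplying such commutators (using that $3$-cycles generate alternating groups, and that I can always find fresh disjoint clopen ``room'' by minimality) I can manufacture any prescribed elementary cycle of Step 1. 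This shows every generator lies in the normal closure of $g$, giving simplicity.

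\textbf{The main obstacle} I anticipate is the absence of the Hausdorff hypothesis and of any amenability/almost-finiteness assumption — unlike Matui's setting, I cannot assume $\G$ is AF or purely infinite, so I have no index/comparison map to control how clopen sets embed into one another, and the groupoid may have nontrivial isotropy and non-Hausdorff germ space. The delicate point is therefore Step 2's requirement of ``enough disjoint room'': with only minimality, producing the disjoint clopen translates needed to realize the commutator identities, and ensuring the resulting elements genuinely lie in $[[\G]]_t$ (i.e. are built from honest full bisections and are torsion), requires a careful local analysis of bisections rather than an appeal to a comparison theory. I expect the torsion constraint to be what forces working with the commutator subgroup $[[\G]]_t'$ rather than $[[\G]]_t$ itself, since only the even/alternating part admits the clean $3$-cycle normal-generation that survives in this general groupoid setting.
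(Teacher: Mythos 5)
Your overall strategy --- the commutator trick on a clopen $U$ with $g(U)\cap U=\varnothing$, minimality to move clopen sets around, and an ``alternating'' structure built from cycles --- is indeed the paper's strategy, but there are two genuine gaps exactly where the technical work lies. First, your plan reduces everything to \emph{conjugating} generators (elementary cycles) so that their supports land inside $U$. That step fails as stated: conjugation cannot shrink a support, so an element of $[[\G]]_t'$ whose support is all of $X$ (for instance a cycle permuting three clopen sets that partition $X$) can never be conjugated into $U$. The missing idea is a \emph{cutting} step, prior to any conjugation: each torsion element $g$ must first be written as a product $g=g_1\cdots g_m$ of its restrictions to a finite partition of $X$ into $g$-invariant clopen sets, each small enough to be carried into $U$ by an element of the group. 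This is precisely where the torsion hypothesis is used (finite $g$-orbits give arbitrarily small $g$-invariant clopen sets $V=\bigcup_i g^i(W)$; compactness gives a finite invariant partition; minimality plus patching of bisections produces an involution, or a $3$-cycle when one needs the compressing element to lie in $[[\G]]_t'$ itself, carrying each piece into $U$). The paper isolates this as the property of being \emph{doubly infinitesimally generated} (Definition \ref{D: infinitesimally generated}, Lemma \ref{L: infinitesimally generated}), and that lemma is the heart of the proof; your sketch flags this as ``the main obstacle'' but leaves it unresolved. (Your attribution of the role of torsion is also off: torsion is what makes the cutting argument work, i.e.\ it is the reason for $[[\G]]_t$ versus $[[\G]]$; passing to the derived subgroup is the separate, standard symmetric-versus-alternating issue.)

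Second, the conclusion of your Step 2 is too quick even granting compression. If $N$ is the normal closure of a nontrivial element in $H=[[\G]]_t'$, the commutator trick only yields that $N$ contains commutators of compressible elements of $H$, i.e.\ $N\supseteq H'=[[\G]]_t''$; to finish, one needs $[[\G]]_t'$ to be \emph{perfect}. Your observation that $3$-cycles are commutators of $2$-cycles does not give this, since $2$-cycles lie in $[[\G]]_t$ but need not lie in $[[\G]]_t'$. The paper resolves both this and your Step 1 simultaneously by running the same machinery one level up: applying Proposition \ref{P: normal contains commutator} to $[[\G]]_t$ (with $N$ the subgroup generated by $3$-cycles, which is normal and nontrivial) shows $[[\G]]_t'$ \emph{equals} the subgroup generated by $3$-cycles, and perfectness then follows because nontrivial $5$-cycles exist and every $5$-cycle is a commutator of $3$-cycles, so $[[\G]]_t''$ is a nontrivial normal subgroup of $[[\G]]_t$ and hence contains $[[\G]]_t'$. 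In particular, your Step 1 claim that $[[\G]]_t$ is generated by elementary cycles is never justified in your sketch, and in the paper the analogous generation statement is an output of the simplicity machinery, not an input to it.
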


The reader may compare the present proof with the proof of Bezuglyi and Medynets \cite{Bezuglyi-Medynets:flipconjugacy} of simplicity of $[[\G]]'$ in the special case where $\G$ is the groupoid of germs of a minimal $\Z$-action on the Cantor set (this result was first shown by Matui \cite{Matui:simple} with a rather different proof).

We use the notations $[g, h]=ghg^{-1}h^{-1}$ or the commutator and $g^h=hgh^{-1}$ for the conjugation.

Let us fix some notation and terminology in the following definition.
\begin{defin}\label{D: infinitesimally generated}
Let $H$ be a group of homeomorphisms of a topological space $X$. Given two subsets $U, V\subset X$ we write $U\preceq_H V$ if there exists $h\in H$ such that $h(U)\subset V$. 

We say that $H$ is \emph{infinitesimally generated} if for every open subset $U\subset X$ the set 
\[ S_U=\{ h\in H : \supp(h)\preceq_H U\}\]
generates $H$.

We say that $H$ is \emph{doubly infinitesimally generated} if the following holds. For every open subset $U\subset X$ and every $g, h\in H$ there exist $g_1, \ldots g_m, h_1, \ldots h_n\in H$ such that $g=g_1\cdots g_m$, $h=h_1\cdots h_n$, and the condition 
\[\supp(g_i)\cup \supp(h_j)\preceq_H U\] 
holds for every $1\leq i \leq m$ and $1\leq j\leq n$.
\end{defin}

\begin{remark}\label{R: generating set infinitesimally generated}
To check that $H$ is doubly infinitesimally generated, it is enough to check that the condition above is satisfied for $g, h$ in a generating set of $H$.
\end{remark}

The idea in the proof of the following proposition is classical and has been used in many proofs of simplicity of groups (cf. in particular the argument at the end of the proof of \cite[Theorem 3.4]{Bezuglyi-Medynets:flipconjugacy}.)
\begin{prop}\label{P: normal contains commutator}
Let $H$ a group of homeomorphism of a topological Haurdoff space  $X$. Assume that $H$ is doubly infinitesimally generated. Then every non-trivial normal subgroup of $H$ contains the derived subgroup $H'$. In particular, if $H$ is perfect then it is simple.
\end{prop}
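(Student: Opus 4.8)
The plan is to take a non-trivial normal subgroup $N\unlhd H$ and show it contains every commutator $[g,h]$ with $g,h\in H$, which then gives $H'\le N$. First I would use non-triviality of $N$ to extract some $n\in N$ with $n\neq\operatorname{Id}$; since $n$ is a non-trivial homeomorphism of the Hausdorff space $X$, there is a point $x$ moved by $n$, and by the Hausdorff property I can separate $x$ from $n(x)$ to produce a non-empty open set $U$ with $U\cap n(U)=\emptyset$. This $U$ will be the open set we feed into the doubly-infinitesimally-generated hypothesis. The key elementary fact I want to isolate is the following commutator-displacement lemma: if $f\in H$ has $\supp(f)\subset W$ and $n(W)\cap W=\emptyset$, then the element $[f,n]=f\,(n f^{-1} n^{-1})$ lies in $N$ (because $nf^{-1}n^{-1}=f^{-1}{}^{n^{-1}}$ conjugated appropriately is in $N$ by normality) and moreover its two factors have \emph{disjoint} supports, which will let me compare products of commutators with products of the $f$'s themselves.

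Concretely, here is the mechanism I would exploit. Given any $f$ with $\supp(f)\preceq_H U$, choose $t\in H$ with $t(\supp(f))\subset U$ and set $\tilde f=t f t^{-1}$, so $\supp(\tilde f)\subset U$. Then $[\tilde f,n]\in N$ by normality of $N$, and since $\supp(\tilde f)\subset U$ while $\supp(n\tilde f^{-1}n^{-1})=n(\supp(\tilde f))\subset n(U)$ is disjoint from $U\supset\supp(\tilde f)$, the product $[\tilde f,n]=\tilde f\cdot(n\tilde f^{-1}n^{-1})$ restricts to $\tilde f$ on $U$ and to the conjugate on $n(U)$. Conjugating $[\tilde f,n]$ back by $t^{-1}$ and by a further element that untangles the $n(U)$ piece, I would recover an element of $N$ that agrees with $f$ where it matters; the upshot is that $N$ contains all elements whose support is $\preceq_H U$, i.e.\ the full generating set $S_U$ up to the disjoint ``garbage'' supported in $n(U)$. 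The real content is to show that the garbage cancels when one multiplies several such elements together, which is exactly what the \emph{double} factorisation is designed for.

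Now I would invoke the hypothesis in its full strength. Take any $g,h\in H$ and factor $g=g_1\cdots g_m$, $h=h_1\cdots h_n$ with every $\supp(g_i)\cup\supp(h_j)\preceq_H U$. Each factor can be displaced into $U$ simultaneously, so that after conjugating by a single $t$ all the $g_i,h_j$ have support inside $U$ and their $n$-translates have support inside the disjoint set $n(U)$. Writing the commutator $[g,h]$ in terms of the factors and inserting the relation $\tilde f\equiv[\tilde f,n]\pmod{N}$ for each factor, the pieces living in $n(U)$ commute with everything supported in $U$ and telescope: the product of all the commutators $[\tilde g_i,n],[\tilde h_j,n]$ is an element of $N$ (a product of elements of $N$) that equals $[g,h]$ times a homeomorphism supported entirely in $n(U)$, and by reassembling the $n(U)$-parts one checks this leftover is trivial. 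Hence $[g,h]\in N$. The main obstacle I anticipate is precisely this bookkeeping step — keeping careful track, under the conjugations by $t$ and $n$, of which factors have support in $U$ versus $n(U)$ and verifying that the $n(U)$-supported remainders genuinely cancel rather than merely commute. This is where the disjointness $U\cap n(U)=\emptyset$ and the two-sided (double) factorisation are both essential, and it is the step one must write out with care; everything else is formal manipulation with supports and normality.
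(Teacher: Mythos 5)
Your opening moves (a nontrivial $n\in N$, an open $U$ with $U\cap n(U)=\varnothing$ from the Hausdorff property, and the observation that $[f,n]=f\cdot(nf^{-1}n^{-1})\in N$ has its ``garbage'' factor supported in $n(U)$) coincide with the paper's, but the execution has a fatal gap at the step ``after conjugating by a single $t$ all the $g_i,h_j$ have support inside $U$.'' The hypothesis does not provide this: being doubly infinitesimally generated gives, for each \emph{pair} $(i,j)$, some $k_{ij}\in H$ with $k_{ij}(\supp(g_i)\cup\supp(h_j))\subset U$, and the displacing element genuinely depends on the pair. A single simultaneous $t$ cannot exist in general: if $g$ has full support (fixed-point-free involutions in topological full groups are examples), then $\bigcup_i\supp(g_i)\supseteq\supp(g)=X$ cannot be mapped into a proper open set by any homeomorphism. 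The pair-indexed form of the definition exists precisely to work around this, and the idea your plan is missing is the paper's group-theoretic reduction: $[g,h]$ lies in the subgroup \emph{normally generated} by the commutators $[g_i,h_j]$, so it suffices to prove $[g_i,h_j]\in N$ one pair at a time, for which pairwise displacement (plus normality of $N$, to justify replacing $g_i,h_j$ by $g_i^{k_{ij}},h_j^{k_{ij}}$) is enough.

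There is a second, independent error in your cancellation claim, even if simultaneous displacement were granted. Writing $g'=[\tilde g_1,n]\cdots[\tilde g_m,n]=\tilde g A$ and $h'=[\tilde h_1,n]\cdots[\tilde h_n,n]=\tilde h B$ with $A,B$ supported in $n(U)$, one computes $[g',h']=[\tilde g,\tilde h]\,[A,B]$, and $[A,B]=n\,[(\tilde g_m\cdots\tilde g_1)^{-1},(\tilde h_n\cdots\tilde h_1)^{-1}]\,n^{-1}$: the leftover is a conjugate of another commutator of exactly the kind you are trying to put in $N$, not the identity, so ``one checks this leftover is trivial'' is false and any attempt to absorb it is circular. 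The repair is the paper's asymmetric trick: replace only \emph{one} side by its $n$-commutator. With $\supp(g_i)\cup\supp(h_j)\subset U$, the element $[g_i,n]=g_i\,(ng_i^{-1}n^{-1})$ lies in $N$, its second factor commutes with $h_j$, hence $[h_j,[g_i,n]]=[h_j,g_i]$ \emph{exactly}, with no leftover; and $[h_j,[g_i,n]]\in N$ because $N$ is normal. Combining this one-sided identity with the normal-closure reduction above yields the proposition; applied symmetrically to both sides at once, as in your plan, it cannot be made to close.
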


\begin{proof}

Let $N\unlhd H$ be a non-trivial normal subgroup. Let $g, h\in H$. We show that $[g, h]\in N$. Let $f\in N$ be non-trivial. Let $U\subset X$ be an open set so that $f(U)\cap U=\varnothing$ (here we use the assumption that $X$ is Hausdorff). Write $g=g_1\cdots g_m$ and $h=h_1\cdots h_n$ as in Definition \ref{D: infinitesimally generated} (with respect to the clopen subset $U$). The commutator $[g, h]$ belongs to the subgroup normally generated by $[g_i, h_j]$ for all $i, j$. Hence it is enough to show that $[g_i, h_j]\in N$ for every $i$ and $j$.  Let $k\in H$ be such that $k(\supp(g_j)\cup \supp(h_j))\subset U$. Since $k$ normalizes $N$ it is sufficient to show that $[g_i^k, h_j^k]\in N$. Hence, up to replacing $g_i, h_j$ with $g_i^k, h_j^k$ we may assume that $\supp(g_i)\cup\supp
(h_j)\subset U$. After this assumption is made, we have that $g_i^f$ commutes with $g_i$ and $h_j$ since its support is contained in $f(U)$. 
Since $f\in N$ we have $[g_i, f]\in N$. Using that $g_i^f$ commutes with $g_i$ and $h_j$ this implies
\[[h_j,g_i]=[h_j, g_i(g_i^{-1})^f]=[h_j, [g_i, f]]\in N\]
thereby concluding the proof.\qedhere

\end{proof}

\begin{lemma}\label{L: infinitesimally generated}
Let $\G$ be as in Theorem \ref{T: simplicity}. Then $[[\G]]_t$ is doubly infinitesimally generated.
\end{lemma}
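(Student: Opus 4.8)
The plan is to use Remark~\ref{R: generating set infinitesimally generated} to reduce the verification to the case where $g$ and $h$ range over a generating set of $[[\G]]_t$, namely the torsion elements, and then to exploit their finite order to break each of them into factors supported in small clopen sets that can be pushed into $U$ by elements of $[[\G]]_t$. Note first that minimality forces $X$ to be perfect (the isolated points form an open invariant set, which by minimality is either empty or all of $X$, and a compact discrete space is finite), so $X$ is a Cantor set. Fix a nonempty open $U\subsetneq X$ (the case $U=X$ being trivial, with $k=\mathrm{id}$) and torsion elements $g,h\in[[\G]]_t$ of orders $N$ and $M$.

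The first step is a decomposition of a single torsion element $g$ of order $N$. Starting from any finite clopen partition $\mathcal Q$ of $X$, the common refinement $\mathcal R=\bigvee_{j=0}^{N-1}g^{j}\mathcal Q$ is again a finite clopen partition, and since $g^{N}=\mathrm{id}$ we have $g\mathcal R=\mathcal R$, so $g$ permutes the atoms of $\mathcal R$. Splitting this permutation of atoms into cycles and letting $g_i$ coincide with $g$ on the ($g$-invariant) union of the atoms of the $i$-th cycle and be the identity elsewhere, we obtain torsion elements $g_i\in[[\G]]_t$ with pairwise disjoint supports such that $g=\prod_i g_i$ and each $\supp(g_i)$ is contained in a union of at most $N$ atoms of $\mathcal R$. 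Fix a $\G$-invariant probability measure $\mu$ (which is non-atomic by minimality); refining $\mathcal Q$, we may assume every atom of $\mathcal R$ has arbitrarily small $\mu$-measure. Performing the same construction for $h$, with a partition $\mathcal R'$, for every pair $(i,j)$ the clopen set $K_{ij}$ obtained as the union of the atoms of $\mathcal R$ meeting $\supp(g_i)$ and the atoms of $\mathcal R'$ meeting $\supp(h_j)$ contains $\supp(g_i)\cup\supp(h_j)$, is a union of at most $N+M$ atoms, and hence has small $\mu$-measure. It thus suffices to produce, for each $K_{ij}$, an element $k\in[[\G]]_t$ with $k(K_{ij})\subset U$.

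This reduces the lemma to the following compression statement: every clopen set $K$ with $\mu(K)<\tfrac12\mu(U)$ satisfies $K\preceq_{[[\G]]_t}U$, and here minimality does the work. Set $T=U\setminus K$, a clopen set with $\mu(T)\ge\mu(U)-\mu(K)>\mu(K)$, into which $K$ will be routed. The inductive step is an extension move: if $K^\ast\subsetneq K$ is a clopen set already routed into $T$ by a bisection $\mathcal V$ with $s(\mathcal V)=K^\ast$ and $r(\mathcal V)\subset T$, pick $x\in K\setminus K^\ast$; since bisections preserve $\mu$ we have $\mu\bigl(T\setminus r(\mathcal V)\bigr)=\mu(T)-\mu(K^\ast)>0$, so this clopen set is nonempty, and by minimality the $\G$-orbit of $x$ meets it, yielding $\gamma\in\G$ with $s(\gamma)=x$ and $r(\gamma)\in T\setminus r(\mathcal V)$; the étale structure extends $\gamma$ to a bisection on a clopen neighbourhood of $x$ inside $K\setminus K^\ast$ with range in $T\setminus r(\mathcal V)$, enlarging the partial routing. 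A standard exhaustion argument based on this move and the compactness of $K$ routes all of $K$, producing finitely many bisections with pairwise disjoint sources partitioning $K$ and pairwise disjoint ranges contained in $T$. The associated involutions—each swapping a source with its range via the bisection and fixing the rest—are torsion, hence lie in $[[\G]]_t$, and have pairwise disjoint supports, so their product is an element $k\in[[\G]]_t$ with $k(K)\subset T\subset U$.

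The crux, and the reason the statement concerns $[[\G]]_t$ rather than all of $[[\G]]$, is exactly this need for enough room inside $U$ to place the routed pieces disjointly. It is secured by the budget $\mu(K_{ij})<\tfrac12\mu(U)$, which is made available by the finite order of $g$ and $h$: that is precisely what forces each $K_{ij}$ to be a union of \emph{boundedly many} small atoms, and so to have small measure. This is what lets one bypass the comparison property of clopen sets, which underlies the corresponding argument for minimal $\Z$-actions used in \cite{Bezuglyi-Medynets:flipconjugacy} but is not available for general minimal groupoids. If instead $\G$ admits no invariant probability measure it is purely infinite, every nonempty clopen set then compresses into every other, and the routing is unobstructed. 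Assembling the commuting involutions for each pair $(i,j)$ yields the decompositions demanded by Definition~\ref{D: infinitesimally generated}, proving that $[[\G]]_t$ is doubly infinitesimally generated.
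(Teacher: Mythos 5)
Your reduction to torsion generators and your cycle decomposition of a torsion element via the $g$-invariant refinement $\bigvee_{j=0}^{N-1}g^{j}\mathcal{Q}$ are fine, and in spirit close to what the paper does. The gap is the compression step, and it is fatal as written. Your claim that every clopen $K$ with $\mu(K)<\tfrac12\mu(U)$ satisfies $K\preceq_{[[\G]]_t}U$ is exactly \emph{dynamical comparison} for $\G$, and the greedy exhaustion you sketch does not prove it: each extension move routes a clopen neighbourhood of one more point, but nothing bounds from below the measure captured per move, so the process need not terminate in finitely many steps; after countably many moves the unrouted part of $T=U\setminus K$ is a closed set of positive measure which may well have empty interior, and minimality (density of orbits) only gives access to nonempty \emph{open} sets, so the extension move becomes unavailable and the routing stalls strictly short of $K$. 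Comparison of this kind is a theorem for minimal $\Z$-actions (Glasner--Weiss) and for almost finite groupoids, but for a general minimal \'etale groupoid with Cantor unit space it is not known; it cannot be invoked, and compactness of $K$ does not supply it. Your fallback for the measureless case is an equally serious gap: ``no invariant probability measure implies $\G$ is purely infinite, hence every clopen set compresses into every nonempty open set'' is the (open, in general) stably-finite versus purely-infinite dichotomy for minimal ample groupoids. Tarski's theorem only yields paradoxicality of the class of $X$ in the type semigroup; it does not give $[K]\le[U]$ for arbitrary clopen $K$ and nonempty open $U$. So both horns of your dichotomy assume unproved statements.

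The paper's proof shows why none of this machinery is needed, and this is precisely where torsion enters: since $g$ has finite order, each point $x$ has a finite $g$-orbit $x_1,\ldots,x_l$, and by minimality the $\G$-orbit of $x$ meets $U$ in infinitely many points, so one can pick distinct $y_1,\ldots,y_l\in U$ and assemble, from finitely many small bisections, an involution $k\in[[\G]]_t$ with $k(x_i)=y_i$ for all $i$; then the $g$-invariant clopen set $V=\bigcup_{i}g^{i}(W)$, for $W$ a small enough clopen neighbourhood of $x$, satisfies $k(V)\subset U$. Compactness gives a finite cover of $X$ by such $g$-invariant sets, refined to a partition, and the decomposition $g=g_1\cdots g_m$ along this partition comes with the compressing involutions already built in. In other words, the pieces are made small \emph{topologically} (clopen neighbourhoods of finite sets), not small in measure, and compressing a neighbourhood of a finite set into $U$ requires only minimality and the \'etale structure --- no invariant measure, no comparison. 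Your splitting of $U$ to handle $g$ and $h$ simultaneously parallels the paper's partition $U=U_1\sqcup U_1'$, and your argument would be repaired by replacing the measure-theoretic compression lemma with this orbit-based construction.
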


\begin{proof}
Let us first show that $[[\G]]_t$ is infinitesimally generated.  Let $U\subset X$  be clopen. It is enough to show that any torsion element $g\in [[\G]]_t$ can be written as a product of elements in $S_U$. Let $d$ be the order of $g$. Pick a point $x\in X$ and enumerate its $g$-orbit by $x=x_1, x_2 \ldots x_{l}$ (for some $l| d$). By minimality one can find $y_1, \ldots y_{l}\in U$ lying in the same $\G$-orbit of $x$, and such that $x_1\ldots x_{l}, y_1, \ldots y_{l}$ are all distinct. Clearly there exists an element $k\in [[\G]]_t$ of order 2 such that $k(x_i)=y_i$ for every $i=1, \ldots l$. To see this, consider $\gamma_i \in \G$  such that $s(\gamma_i)= x_i$ and $r(\gamma_i)=y_i$ for every   $i=1, \ldots l$. For each $i$ let $\mathcal{U}_i\subset \G$ be a bisection containing $\gamma_i$ such that $x_i\in s(\mathcal{U}_i)$, $y_i\in r(\mathcal{U}_i)$ are clopen sets and are small enough so that the sets $s(\mathcal{U}_1),\ldots, s(\mathcal{U}_l), r(\mathcal{U}_1), \ldots, r(\mathcal{U}_l)$ are all disjoint.  Let $k\in [[\G]]_t$ be the element that  coincides with $\tau_{\mathcal{U}_i}$ on $s(\mathcal{U}_i)$ and with $\tau_{\mathcal{U}_i}^{-1}$ on $r(\mathcal{U}_i)$  for all $i$ and with the identity elsewhere. More formally $k=\tau_{\mathcal{V}}$ for the full bisection $\mathcal V= \mathcal U_1\cup\ldots \mathcal U_l\cup \mathcal U_1^{-1}\cup \cdots \mathcal{U}_l^{-1}\cup X'$ where $X'=X\setminus(\bigcup s(\mathcal{U}_i) \cup \bigcup r(\mathcal{U}_i)$).  

Now let $W$ be a clopen neighbourhood of $x$ and set $V=\cup_{i=0}^{d-1} g^i(W)$. Then $V$ is $g$-invariant, and we have $k(V)\subset U$ if $W$ is small enough.

By compactness we have proven that there exists a finite covering of $X$ by $g$-invariant clopen sets $V$ such that $V\preceq_{[[\G]]_t} U$. Up to taking a refinement we may assume that this covering is a partition (since taking intersections and differences preserves the $g$-invariance). Then $g=g_1\cdots g_m$ where each $g_i$ coincides with the restriction of $g$ on each element of the partition and with the identity elsewhere, hence $g_i\in S_U$. This proves that $[[\G]]_t$ is infinitesimally generated.

Now observe that the construction above yields the following more precise conclusion. For every torsion element $g\in [[\G]]_t$ and every clopen set $U\subset X$ there exist a writing  $g=g_1\cdots g_m$,  a partition into clopen subsets $X=V_1\sqcup \ldots\sqcup V_m$ and elements $k_1,\ldots k_m\in [[\G]]_t$ such that for every $i=1,\ldots, m$ we have
\begin{enumerate} \item $\supp(g_i)\subset V_i$;
\item the elements $k_i$ have order 2, $k_i(V_i)\subset U$ and moreover $k_i(V_i)\cap V_i=\varnothing$; 
\end{enumerate}

We now show that $[[\G]]_t$ is doubly infinitesimally generated. Let $g,g'\in [[\G]]_t$ be torsion elements (by Remark \ref{R: generating set infinitesimally generated}, it is enough to check that the condition in Definition \ref{D: infinitesimally generated} is satisfied for $g$ and $g'$ in the generating set consisting of torsion elements).
Let $U\subset X$ be a non-empty clopen set and consider a partition $U=U_1\cup U_1'$ into two non-empty clopen sets. 

Consider decompositions $g=g_1\cdots g_m$, sets $V_1,\ldots V_m$, elements $k_1,\ldots, k_m$ as above such that $k_i(V_i)\subset U_1$, and similarly $g'=g'_1\cdots g'_n$, $V_1', \ldots V_n'$, $k_1',\ldots, k'_n$ such that $k_i'(V_i')\subset U_1'$. Fix $i$ and $j$. Set $W_i=V_i\setminus U$ and $W'_j=V'_j\setminus (U\cup W_i)$. Then by construction, the four clopen sets $W_i, k_i(W_i), W'_j, k'_j(W_j)$ are all disjoint. Let $h\in [[\G]]_t$ be the element that coincides with $k_i$ on $W_i\cup k_i(W_i)$, with $k'_j$ on $W'_j\cup k'_j(W'_j)$ and with the identity elsewhere. Then $h\in [[\G]]_t$ and we have $h(V_i\cup V'_j)\subset U$, thereby concluding the proof.

\end{proof}

\begin{lemma}
The group $[[\G]]_t'$ is doubly infinitesimally generated and perfect.
\end{lemma}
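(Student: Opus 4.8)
The plan is to establish the two properties separately and to leverage the work already done on $[[\G]]_t$. For perfectness, I would argue that $[[\G]]_t'$ coincides with its own derived subgroup, i.e. that $[[\G]]_t'' = [[\G]]_t'$. To see this, I would apply Proposition \ref{P: normal contains commutator} to the group $H = [[\G]]_t$: by Lemma \ref{L: infinitesimally generated}, $H$ is doubly infinitesimally generated, so every non-trivial normal subgroup of $H$ contains $H' = [[\G]]_t'$. The key observation is then that $[[\G]]_t''$ is a non-trivial normal subgroup of $[[\G]]_t$ (it is normal because it is characteristic in $[[\G]]_t'$, which is itself normal in $[[\G]]_t$; and it is non-trivial provided $[[\G]]_t'$ is non-abelian, which follows from minimality since $[[\G]]_t$ contains enough torsion elements with disjoint supports to generate a non-abelian commutator subgroup). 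Hence $[[\G]]_t' \le [[\G]]_t''$, giving equality and showing $[[\G]]_t'$ is perfect.

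For the double infinitesimal generation of $[[\G]]_t'$, the natural strategy is to transfer the property from $[[\G]]_t$ to its derived subgroup. First I would check the infinitesimal generation: given a clopen $U \subset X$, I want every commutator $[g,g'] \in [[\G]]_t'$ to be expressible as a product of elements of $[[\G]]_t'$ whose supports are $\preceq_{[[\G]]_t'}$-small. Here I would exploit the refined conclusion of Lemma \ref{L: infinitesimally generated}: decompose $g = g_1\cdots g_m$ and $g' = g'_1\cdots g'_n$ with the supports of the factors compressible into $U$ via the order-two elements $k_i$, $k'_j$. The commutator $[g,g']$ then lies in the normal closure of the commutators $[g_i, g'_j]$, each of which has support contained in $V_i \cup V'_j$, compressible into $U$. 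The point is that these building blocks are themselves commutators, hence belong to $[[\G]]_t'$, and the conjugating and compressing elements can likewise be arranged to lie in $[[\G]]_t'$ (or their action on the relevant supports realized by elements of $[[\G]]_t'$), so the compression $\supp(\,\cdot\,) \preceq_{[[\G]]_t'} U$ is witnessed inside the derived subgroup.

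For the \emph{double} infinitesimal generation I would repeat the argument of Lemma \ref{L: infinitesimally generated} almost verbatim, now with $g$ and $g'$ replaced by a pair of generators of $[[\G]]_t'$, namely a pair of commutators of torsion elements. Splitting $U = U_1 \cup U_1'$ into two non-empty clopen pieces and constructing a single element $h$ that simultaneously compresses the supports of the relevant factors of the two commutators into $U$ (disjointly) goes through exactly as before; the only thing to verify is that the compressing element $h$ can be taken in $[[\G]]_t'$ rather than merely in $[[\G]]_t$, which I would ensure by choosing $h$ itself to be a product of commutators of order-two elements with disjoint supports (such an $h$ is a commutator and lands in the derived subgroup). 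The main obstacle I anticipate is precisely this bookkeeping: every compressing and conjugating homeomorphism used in the proof of Lemma \ref{L: infinitesimally generated} must be upgraded to lie in $[[\G]]_t'$ and not just in $[[\G]]_t$, and the condition $\preceq$ must be reinterpreted relative to the smaller group $H = [[\G]]_t'$. Because the order-two elements $k_i$ built by minimality can be chosen with large free room in their supports, each such element can be written as a commutator (or as a product of disjointly-supported involutions, hence an element of the derived subgroup), which is what makes the transfer work. Once both properties are in hand, applying Proposition \ref{P: normal contains commutator} to $H = [[\G]]_t'$ together with perfectness yields simplicity, completing the proof of Theorem \ref{T: simplicity}.
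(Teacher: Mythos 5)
Your overall architecture matches the paper's: perfectness is reduced, via Proposition \ref{P: normal contains commutator} and Lemma \ref{L: infinitesimally generated}, to showing that $[[\G]]_t''$ is a non-trivial normal subgroup of $[[\G]]_t$, and double infinitesimal generation of $[[\G]]_t'$ is obtained by re-running the proof of Lemma \ref{L: infinitesimally generated} with all factors and all compressing elements forced into the derived subgroup. The genuine gap lies in the devices you invoke to force elements into $[[\G]]_t'$; as stated they are either unjustified or false. A product of disjointly-supported involutions is not ``hence an element of the derived subgroup'' (a single involution is itself such a product, and whether it lies in $[[\G]]_t'$ is exactly the point at issue); likewise ``torsion elements with disjoint supports'' cannot witness non-abelianness of $[[\G]]_t'$, because disjointly supported homeomorphisms commute, so all the relevant commutators vanish. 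Finally, the claim that the order-two compressors $k_i$ of Lemma \ref{L: infinitesimally generated} ``can be written as commutators'' is precisely what needs proof, and it is not true in general: an involution swapping two clopen sets is a transposition-like element, and abelianizations of topological full groups typically contain $2$-torsion detecting such elements, so they need not lie in the derived subgroup at all.

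The missing idea, which is how the paper resolves all three points at once, is to work with $n$-cycles: elements of order $n$ whose support splits into clopen sets $V_1\sqcup\cdots\sqcup V_n$ permuted cyclically. The paper first proves that $[[\G]]_t'$ equals the subgroup $N$ generated by $3$-cycles: $N$ is normal in $[[\G]]_t$ (a conjugate of a $3$-cycle is a $3$-cycle) and non-trivial (by minimality one constructs a non-trivial $3$-cycle explicitly), so $[[\G]]_t'\subset N$ by Proposition \ref{P: normal contains commutator} and Lemma \ref{L: infinitesimally generated}; conversely every $3$-cycle is a commutator of two $2$-cycles, so $N\subset [[\G]]_t'$. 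This identification supplies a generating set with the right support structure: the restriction of a $3$-cycle to an invariant clopen set is again a $3$-cycle, so the factors produced by the argument of Lemma \ref{L: infinitesimally generated} automatically stay in $[[\G]]_t'$, and the compressors $k_i$ can be chosen to be $3$-cycles (cycling $V_i$ through two small disjoint sets inside $U$), which are commutators of $2$-cycles and hence manifestly lie in $[[\G]]_t'$ --- no claim about involutions being commutators is ever needed. Non-triviality of $[[\G]]_t''$ is then obtained concretely as well: non-trivial $5$-cycles exist by minimality, and every $5$-cycle is a commutator of $3$-cycles, hence lies in $[[\G]]_t''$. Without some device of this kind (odd-order cycles playing the role of even permutations), the bookkeeping you describe does not close up.
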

The proof is a modification of an argument used by Cornulier (cf. the end of the proof of Th\'eor\`eme 3.1.6 in  \cite{Cornulier:Bourbaki}).

\begin{proof}
Let us say that an element $g\in[[\G]]_t$ is an $n$-\emph{cycle} if it has the following form.
$g$ has order $n$, and its support decomposes as a disjoint union of clopen sets
\[\supp(g)= V_1\sqcup \cdots \sqcup V_n,\]
such that $g(V_i)=V_{i+1}$ taking $i$ modulo $n$. 
Let $N$ be the subgroup of $[[\G]]_t$ generated by $3$ cycles (the same reasoning will apply for any $n\geq 3$ odd).
Then $N$ is normal in $[[\G]]_t$, since any conjugate of an $n$-cycle is still an $n$-cycle. It is easy to check that $N\neq \{e\}$. Namely  consider 3 points $x,y,z \in X$ lying in the same $\G$-orbit, and let $\gamma_1, \gamma_2\in \G$ be such that $s(\gamma_1)=x, r(\gamma_1)= y=s(\gamma_2), r(\gamma_2)=y$. Consider bisections $\mathcal{U}_1, \mathcal{U}_2$ containing $\gamma_1$ and $\gamma_2$. Let $V_1$ be a clopen neighborhood of $x$ small enough so that $V_1\subset s(\mathcal{U}_1)$, $V_1\cap\tau_{\mathcal{U}_1}(V_1)=\varnothing$,  $\tau_{\mathcal{U}_1}(V_1)\subset s(\mathcal{U}_2)$, and $\tau_{\mathcal{U}_2}\circ \tau_{\mathcal{U}_1}(V_1)\cap \tau_{\mathcal{U}_1}(V_1)=\varnothing$. The element $g\in [[\G]]_t$ that coincides with $\tau_{\mathcal{U}_1}$ on $V_1$, with $\tau_{\mathcal{U}_2}$ on $\tau_{\mathcal{U}_1}(V_1)$ and with $\tau_{\mathcal{U}_1}^{-1}\circ \tau_{\mathcal{U}_2}^{-1}$ on $\tau_{\mathcal{U}_2}\circ \tau_{\mathcal{U}_1}(V_1)$ and with the identity elsewhere is a non-trivial 3-cycle.

By Lemma \ref{L: infinitesimally generated} and Proposition \ref{P: normal contains commutator} it follows that $[[\G]]_t\subset N$.    

Moreover it is easy to see that every 3-cycle is the commutator of two 2-cycles. Hence $N\subset [[\G]]_t'$. Thus $[[\G]]_t'=N$.

Using the fact that $[[\G]]_t'$ is generated by 3 cycles, the same proof as in Lemma \ref{L: infinitesimally generated} can be repeated here to show that it is doubly infinitesimally generated (with a  minor modification to choose the elements $k_i$ there to be 3-cycles instead of involutions, thereby ensuring that $k_i\in [[\G]]_t'$). 

It remains to be seen that $[[\G]]_t'=[[\G]]_t''$. By Proposition \ref{P: normal contains commutator} and Lemma \ref{L: infinitesimally generated} it is enough to show that $[[\G]]_t''$ is non-trivial (since it is normal in $[[\G]]_t$). The same reasoning as above can be used to see that there exist non-trivial 5-cycles and that every  5-cycle is a commutator of 3-cycles, thereby belongs to $[[\G]]_t''$.

\end{proof}

\begin{remark}
The reason why we considered the group $[[\G]]_t'$ instead of $[[\G]]'$ is that we are not able to show, in general, that $[[\G]]$ is doubly infinitesimally generated. If one is able to show this, the same proof applies to show simplicity of the group $[[\G]]'$.

\end{remark}

\section{Preliminaries on groups acting on rooted trees}
\subsection{Basic definitions and bounded automata}
Let $T_d$ be the infinite $d$-regular rooted tree. The group of automorphisms of $T_d$ is denoted $\operatorname{Aut}(T_d)$.

 We fix an identification of vertices of $T_d$ with the set of words on the finite alphabet $\mathcal{A}=\{0,\ldots d-1\}$. The root of $T_d$ corresponds to the empty word $\varnothing$. We identify the symmetric group $S_d$ with the subgroup of $\aut(T_d)$ permuting the first level and acting trivially below.
 
 Every $g\in \aut(T_d)$ fixes the root and preserves the levels of the tree. Thus for every vertex $v\in T_d$ the automorphism $g$ induces an isomorphism between the sub-trees rooted at $v$ and at $g(v)$. The choice of an indexing of $T_d$ by the alphabet $\A$ allows to identify this isomorphism with a new element of $\aut(T_d)$, which is called the \emph{section} of $g$ at $v$ and is denoted $g|_v$.

A subgroup $G<\aut(T_d)$ is called \emph{self-similar} if for every $g\in G$ and every $v\in T_d$ we have $g|_v\in G$.
Any self-similar group admits a \emph{wreath recursion}, i.e. an injective homomorphism
\begin{align*}\label{E: wreath recursion}
G\hookrightarrow &G\wr_E S_d:=\bigoplus_E G \rtimes S_d\\
g\mapsto  &(g|_0,\ldots g|_{d-1})\sigma
\end{align*}

where $g|_0\cdots g|_{d-1}$ are the sections of $g$ at vertices at the first level (identified with the alphabet $E$) and the permutation $\sigma\in S_d$ gives the action of $g$ on the first level. 

If $G$ is a self-similar group, a the section at a vertex $v\in T_d$ defines a homomorphism
\[\varphi_v: \stab(v)\to G,\quad g\mapsto g|_v.\]
 
An important special case of self-similar groups are \emph{automaton groups}. A \emph{finite-state automaton} over the alphabet $\mathcal{A}$ is a finite subset $S\subset \aut(T_d)$ which is closed under taking sections: for every $g\in S$ and every $v\in \aut(T_d)$ we have $g|_v\in S$. Such a set can naturally be given the structure of an automaton in the more usual sense, see \cite{Nekrashevych:book}.

 The \emph{activity function} of an automaton $S$ is the function $p_S:\N\to \N$ that counts the number of vertices at level $n$ for which at least an element of $S$ has non-trivial section:
 \[p_S(n)=|\{v\in \mathcal{A}^n\: :\: \exists g\in S, g|_v\neq e\}|.\]
It can be shown that this function grows either polynomially with some well defined integer exponent $d\geq 0$, or exponentially. In the former case the integer $d$ is called the \emph{activity degree} of the automaton $S$. We will mostly be interested in the case $d=0$; in this case the function $p_S(n)$ is uniformly bounded in $n$ and the automaton is said to be of \emph{bounded activity} (for short, a \emph{bounded automaton}).

An \emph{automaton group} is a self-similar group $G<\aut(T_d)$ generated by a a finite-state automaton.

\subsection{The Schreier dynamical system of a group acting on a rooted tree}

Every level-transitive self-similar group has an associated \emph{Schreier dynamical system}, a well-studied object, see \cite{SchreierBasilica, Vorobets:Schreier}. 
It fits in the framework of \emph{uniformly recurrent subgroups} (URS), the topological analogue of an invariant random subgroup (IRS), recently introduced and studied by Glasner and Weiss \cite{Glasner-Weiss:URS}.

%
 
 Let $G$ be a countable group, and consider the space $\sub(G)$ of subgroups of $G$ endowed with the Chabauty topology (in the countable case this is simply the topology induced by the product topology on $\{0,1\}^G$). If $G$ is finitely generated, the choice of a finite symmetric generating set $S$ allows to identify $\sub(G)$ with a space of \emph{pointed labelled Schreier graphs} (where edges are labelled by generators in $S$), and the Chabauty topology coincides with the topology inherited by the space of pointed labelled graphs. The group $G$ acts on $\sub(G)$ by conjugation and this defines an action by homeomorphisms. If $\sub(G)$ is identified with the space of Schreier graphs with respect to a generating set $S$, the conjugation action corresponds to the action by ``moving the basepoint" as follows. Given a pointed labelled Schreier graph $(\Gamma, \gamma)$ and $g\in G$, choose a writing $g=s_n\cdots s_1$, with $s_i\in S$. Then $g\cdot (\Gamma, \gamma)=(\Gamma, g\gamma)$, where by definition $g\gamma$ is the endpoint of the unique path starting from $\gamma$ whose edges are labelled $s_1,\ldots, s_n$. 
 
 A \emph{uniformly recurrent subgroup}~\cite{Glasner-Weiss:URS}, or URS, is a nonempty closed minimal  $G$-invariant subset $X\subset \sub(G)$. 
 
 A construction of uniformly recurrent subgroups is given by the \emph{stability system} associated to a minimal $G$-dynamical system~\cite{Glasner-Weiss:URS}. Namely let $Y$ be a compact space and $G\acts Y$ be a minimal action by homeomorphisms. 
Consider the {stabiliser map}
\begin{align*}
Y\to \sub(G)\\
y\mapsto \stab(y).
\end{align*}
This map need not be continuous, however there is always a $G_\delta$-subset $Y_0\subset Y$ on which it is continuous, see  \cite[\S 1]{Glasner-Weiss:URS}.
The following proposition is proven in \cite{Glasner-Weiss:URS}.

\begin{prop}[Glasner-Weiss \cite{Glasner-Weiss:URS}]\label{P: stability system}
Let $G\acts Y$ be a minimal action of a countable group on a compact space by homeomorphisms, and $Y_0\subset Y$ the continuity locus of the stabiliser map, Then 
\[X=\overline{\{\stab(y)\: :\: y\in Y_0\}}\subset \sub(G).\]
is a URS, called the \emph{stability system} associated to the system $G\acts Y$.
\end{prop}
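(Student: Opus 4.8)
The plan is to verify the four defining properties of a URS: that $X$ is nonempty, closed, $G$-invariant, and minimal. The first three are soft. The map $\stab$ is $G$-equivariant, $\stab(gy)=g\,\stab(y)\,g^{-1}$, and conjugation by $g$ is a homeomorphism of $\sub(G)$; since continuity at a point is preserved by composing with homeomorphisms on both sides, the continuity locus $Y_0$ is $G$-invariant, hence so is the image $\{\stab(y):y\in Y_0\}$, and therefore so is its closure $X$. By construction $X$ is closed. For nonemptiness I would recall why $Y_0$ is dense: for each $g\in G$ the fixed-point set $\{y:gy=y\}$ is closed, so the $g$-th coordinate of $\stab$ (valued in $\{0,1\}$) is discontinuous exactly on the boundary of this closed set, which is nowhere dense; thus $\stab$ is continuous off a meager set, and since $Y$ is compact Hausdorff (hence Baire), $Y_0$ is a dense $G_\delta$. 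In particular $Y_0\neq\varnothing$ and $X\neq\varnothing$.

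The substance is minimality, and the obstacle is precisely that $\stab$ is only continuous on $Y_0$, so orbit-density in $Y$ cannot be transported to $X$ naively (a limit of stabilisers $\stab(y_n)\to P$ with $y_n\to y\notin Y_0$ only yields $P\le\stab(y)$). I would circumvent this by passing to the correspondence
\[ R=\overline{\{(y,\stab(y)) : y\in Y_0\}}\subseteq Y\times \sub(G),\]
a closed, $G$-invariant (for the diagonal action) subset on which both coordinate projections $\pi_1,\pi_2$ are continuous and equivariant. Since $Y_0$ is dense and $\pi_1(R)$ is compact hence closed, $\pi_1(R)=Y$, and $\pi_2(R)=X$ because $\pi_2(R)$ is closed and contains the dense set $\stab(Y_0)$. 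The crucial observation is that over a continuity point the fibre of $\pi_1$ is a single point: if $(y_0,P)\in R$ with $y_0\in Y_0$, then $(y_0,P)$ is a net-limit of points $(y_\alpha,\stab(y_\alpha))$ with $y_\alpha\to y_0$, and continuity of $\stab$ at $y_0$ forces $\stab(y_\alpha)\to\stab(y_0)$, hence $P=\stab(y_0)$.

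From this I would deduce minimality in two steps. First, $R$ itself is minimal: any minimal closed $G$-invariant subset $R_0\subseteq R$ (which exists by compactness and Zorn) satisfies $\pi_1(R_0)=Y$ by minimality of $G\acts Y$, so $R_0$ meets the fibre over every $y_0\in Y_0$; by the singleton-fibre property $(y_0,\stab(y_0))\in R_0$ for all $y_0\in Y_0$, and taking closures gives $R_0=R$. Second, $X=\pi_2(R)$ is a continuous $G$-equivariant image of the minimal system $R$, and continuous equivariant images of minimal systems are minimal, since the $\pi_2$-preimage of a proper closed invariant subset of $X$ would be a proper closed invariant subset of $R$. Hence $X$ is minimal, completing the verification that $X$ is a URS. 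The only delicate point is the singleton-fibre property over $Y_0$, which is exactly where the definition of $Y_0$ as the continuity locus is used.
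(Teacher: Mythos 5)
Your proof is correct, but note that there is nothing in the paper to compare it against: the paper states this proposition with a citation (``The following proposition is proven in [Glasner--Weiss]'') and gives no argument, so your write-up is a genuine, self-contained proof rather than a variant of the paper's. The soft steps are all handled properly: equivariance $\stab(gy)=g\stab(y)g^{-1}$ gives $G$-invariance of $Y_0$ and hence of $X$; the Baire-category argument (boundaries of the closed fixed-point sets $\mathrm{Fix}(g)$ are nowhere dense, $G$ countable, $Y$ compact Hausdorff) gives that $Y_0$ is a dense $G_\delta$, so $X\neq\varnothing$. The essential step, minimality, is where your route differs from the original one: Glasner--Weiss work directly in $\sub(G)$, exploiting the one-sided semicontinuity of stabilisers (if $y_\alpha\to y$ and $\stab(y_\alpha)\to P$ then only $P\le\stab(y)$ holds in general) together with continuity at points of $Y_0$ and a limit-exchange/diagonal extraction to show that every $P\in X$ has $\stab(y_0)$ in its conjugation-orbit closure. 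You instead pass to the closed invariant relation $R=\overline{\{(y,\stab(y)):y\in Y_0\}}\subset Y\times\sub(G)$, prove the singleton-fibre property of $\pi_1$ over points of $Y_0$ (the only place continuity enters), deduce minimality of $R$ by intersecting an arbitrary minimal subsystem $R_0$ with those fibres and using $\pi_1(R_0)=Y$, and finally transport minimality through the factor map $\pi_2$, since continuous equivariant surjections preserve minimality. This graph-closure packaging buys two things: it avoids the diagonal extraction (which is delicate with nets when $Y$ is not assumed metrizable, whereas Glasner--Weiss work in the metric setting), and it isolates exactly how the definition of $Y_0$ is used in one transparent lemma. The price is only the mild bookkeeping of working in the product $Y\times\sub(G)$ and invoking Zorn for the existence of a minimal subsystem.
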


\begin{defin}[Grigorchuk]
Let $G$ be a group acting level-transitively on a rooted tree $T_d$. The \emph{Schreier dynamical system} of $G$ is th uniformly recurrent subgroup $X\subset \sub(G)$ given by the stability system for the action on the boundary of the tree.
\end{defin}
\begin{remark}
The assumption that the group acts level-transitively is equivalent to minimality of the action on the boundary of the tree, and thus Proposition \ref{P: stability system} applies. In particular, $G\acts X$ is also minimal.
\end{remark}
Let $G$ be a countable group acting by homeomorphisms on a compact space $Y$.
We say that a point $y\in Y$ is \emph{topologically regular} if every $g\in G$ that fixes $y$ fixes a neighbourhood of $y$ pointwise (in other words, if the isotropy group $\G_y$ for the groupoid of germs of the action is trivial). If every point is topologically regular, we say that the action $G\acts Y$ is topologically regular. 
\begin{lemma}\label{L: regularity implies continuity}
A point $y\in Y$ is topologically regular if and only if the stabiliser map is continuous at $y$.
\end{lemma}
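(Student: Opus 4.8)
The plan is to prove the equivalence by unpacking both conditions at the level of the groupoid of germs and showing they are two phrasings of the same local triviality of isotropy. Recall the stabiliser map sends $y$ to $\stab(y)=\{g\in G : gy=y\}\in\sub(G)$, and the Chabauty topology on $\sub(G)$ is the restriction of the product topology on $\{0,1\}^G$, so a subbasis is given by the sets $\{H : g\in H\}$ and $\{H : g\notin H\}$. Thus continuity of the map at $y$ amounts to controlling, for each fixed $g\in G$, the membership $g\in\stab(y')$ for $y'$ near $y$.

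First I would treat the easy direction, regularity implies continuity. Suppose $y$ is topologically regular and fix $g\in G$. I want to show $g\in\stab(y')$ is ``locally constant enough'' near $y$ to guarantee continuity. Split into two cases. If $g\notin\stab(y)$, i.e.\ $gy\neq y$, then since the action is by homeomorphisms and $Y$ is Hausdorff (being compact totally disconnected, or more simply since we work with the Cantor-type spaces of the paper), there is a neighbourhood $V$ of $y$ with $gV\cap V=\varnothing$, so $g\notin\stab(y')$ for all $y'\in V$; this is the open condition and needs no regularity. If $g\in\stab(y)$, then by topological regularity $g$ fixes a whole neighbourhood $W$ of $y$ pointwise, so $g\in\stab(y')$ for every $y'\in W$. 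In either case the membership of $g$ is constant on a neighbourhood of $y$. Since this holds for every $g$ in the (countable) group, and continuity in the product topology only requires control of finitely many coordinates at a time on a neighbourhood, I conclude $\stab$ is continuous at $y$.

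For the converse, continuity implies regularity, I would argue by contraposition: if $y$ is not topologically regular, there is some $g\in G$ with $gy=y$ but such that $g$ does not fix any neighbourhood of $y$ pointwise. This means there is a sequence (or net) $y_n\to y$ with $gy_n\neq y_n$, hence $g\notin\stab(y_n)$ for all $n$. But $g\in\stab(y)$. So in the $g$-coordinate of $\{0,1\}^G$ the value of $\stab(y_n)$ is $0$ while the value of $\stab(y)$ is $1$, and $\stab(y_n)\not\to\stab(y)$. This exhibits a failure of continuity of the stabiliser map at $y$, completing the contrapositive.

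I expect the main subtlety, rather than a genuine obstacle, to be bookkeeping the two topologies correctly: one must check that the only way continuity can fail is through a coordinate $g$ that is fixed at $y$ but broken nearby, since the ``$gy\neq y$'' coordinates are automatically locally constant by the Hausdorff/homeomorphism argument. Concretely, the delicate point is that in the Chabauty topology convergence $\stab(y_n)\to\stab(y)$ means: for each $g$, eventually $g\in\stab(y_n)\iff g\in\stab(y)$. The direction $g\in\stab(y)\Rightarrow$ eventually $g\in\stab(y_n)$ is precisely the statement that each element fixing $y$ fixes points near $y$, i.e.\ topological regularity, whereas the direction $g\notin\stab(y)\Rightarrow$ eventually $g\notin\stab(y_n)$ is free. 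Making this matching explicit is the heart of the proof, and once it is set up the equivalence with the triviality of the isotropy group $\G_y$ of the groupoid of germs is immediate from the definition of germ.
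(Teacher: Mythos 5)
Your proof is correct and takes essentially the same route as the paper's: both identify $\sub(G)$ with a subspace of $\{0,1\}^G$ and reduce continuity of $\stab$ at $y$ to continuity at $y$ of each coordinate map $z\mapsto 1_{gz=z}$, which for the coordinates with $gy\neq y$ is automatic by Hausdorffness and for those with $gy=y$ is exactly topological regularity. Your contrapositive treatment of the converse merely unfolds the equivalence the paper states in one line, and your explicit handling of the Hausdorff point is a welcome piece of care the paper leaves implicit.
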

\begin{proof}
 Topological regularity is equivalent to the fact that for every  $g\in G$ the map $Y\to \{0,1\}$ given by $z\mapsto 1_{gz=z}$ is constant on a neighbourhood of $y$ and hence continuous at $y$. It follows that the product map $Y\to \{0,1\}^G$, $z\mapsto (1_{gz=z})_g$ is continuous at $y$. But this is exactly the stabiliser map, after identifying $\sub(G)$ with a subset of $\{0,1\}^G$.  
\end{proof}

For groups generated by bounded automata, the Schreier dynamical system admits a fairly more explicit description. To give it we first recall some terminology.

Let now be a group $G$ acting on a rooted tree $T_d$. We say that a ray $\gamma=x_1x_2\cdots\in T_d$ is \emph{regular} if for every  $g\in G$ the section $g|_{x_1\cdots x_n}$ is eventually trivial, and \emph{singular} otherwise. The following Lemma is well-known and straightforward.
\begin{lemma}
 If a ray $\gamma\in \partial T_d$ is regular, then it is topologically regular, while the converse does not hold.
 \end{lemma}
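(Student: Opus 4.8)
The plan is to treat the two assertions separately. For the implication ``regular $\Rightarrow$ topologically regular'', I would fix a regular ray $\gamma = x_1 x_2 \cdots \in \partial T_d$ and an element $g \in G$ with $g(\gamma) = \gamma$, and show that $g$ fixes a neighbourhood of $\gamma$ pointwise. First I would observe that, since $g$ preserves the levels of $T_d$ and fixes the boundary point $\gamma$, it fixes each of the prefixes $v_n := x_1 \cdots x_n$ as vertices of the tree. By the definition of regularity there is some $N$ with $g|_{v_N} = e$. I would then take the cylinder $C_{v_N} \subset \partial T_d$ consisting of all rays beginning with $v_N$, which is a clopen neighbourhood of $\gamma$. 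For any $\eta = v_N \eta' \in C_{v_N}$, the defining property of the section gives $g(\eta) = g(v_N)\, g|_{v_N}(\eta') = v_N \eta' = \eta$, so $g$ fixes $C_{v_N}$ pointwise. This is precisely topological regularity of $\gamma$.

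It is worth noting that this argument is reversible: if $\gamma$ is topologically regular and $g$ fixes $\gamma$, then $g$ fixes some cylinder $C_{v_N}$ pointwise, which forces $g|_{v_N}$ to act trivially on the boundary of the subtree and hence $g|_{v_N} = e$. Thus topological regularity of $\gamma$ is equivalent to the statement that \emph{every} $g \in G$ fixing $\gamma$ has eventually trivial section along $\gamma$, whereas regularity is the same statement with the quantifier ranging over \emph{all} $g \in G$. This observation pinpoints where the converse can fail: exactly when the singularity of $\gamma$ is witnessed only by elements that move $\gamma$.

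For the failure of the converse I would exhibit a concrete witness guided by this observation. Take $d = 2$ and let $G = \langle a \rangle \le \aut(T_2)$ be the odometer (adding machine), given by the wreath recursion $a = (e, a)\sigma$ with $\sigma$ the nontrivial element of $S_2$. A straightforward induction gives $a|_{1^n} = a \ne e$ for every $n$, so along the ray $\gamma = 111\cdots$ no element has eventually trivial section and $\gamma$ is singular, hence not regular. On the other hand $a$ acts on $\partial T_2 = \{0,1\}^{\N}$ as addition of $1$ with carry, so $a^n(\gamma) = \gamma$ holds only for $n = 0$; the only element of $G$ fixing $\gamma$ is the identity, which trivially fixes a neighbourhood. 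Hence $\gamma$ is topologically regular but not regular.

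The forward implication is routine once the section identity is invoked, so the only real content lies in choosing a clean witness for the converse. The odometer works because it acts freely enough that the sole element fixing $\gamma$ is the identity, yet it has a persistently nontrivial section along $\gamma$; the one point I would double-check is that the element producing the nontrivial sections genuinely moves $\gamma$, which is immediate from the carry description.
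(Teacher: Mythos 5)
Your proof is correct. There is nothing in the paper to compare it against line by line: the paper states this lemma with the remark that it is ``well-known and straightforward'' and gives no proof, so your writeup supplies exactly the details the paper takes for granted. Your forward implication is the standard argument (fixing $\gamma$ forces fixing every prefix $v_n$; regularity gives $g|_{v_N}=e$ for some $N$; the section identity then makes $g$ fix the cylinder $C_{v_N}$ pointwise), and your observation that topological regularity of $\gamma$ is equivalent to ``eventually trivial sections along $\gamma$ for those $g$ that fix $\gamma$'' is a correct sharpening that pinpoints exactly where the converse can fail. The odometer witness is valid: the action on $\partial T_2$ is free, so every point is vacuously topologically regular, while $a|_{1^n}=a$ for all $n$ shows that $1^\infty$ is singular; this is also consistent with the paper's later remark that the Basilica group acts topologically regularly on $\partial T_d$, which implicitly serves the same purpose since that group also has singular rays. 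One small overstatement to fix: your claim that along $\gamma=1^\infty$ ``no element has eventually trivial section'' is false as stated, since negative powers of the odometer do have eventually trivial sections (e.g. $a^{-1}|_1=e$, because subtracting one sends $1w$ to $0w$ with no carry propagation). Singularity only requires the existence of a single element whose sections are not eventually trivial, and $a$ itself is such an element, so your conclusion is unaffected.
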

We say that an orbit for the action of $G$ on $\partial T_d$ is regular if one (equivalently, all) of its points is regular, and \emph{singular} otherwise. The following lemma is also straightforward to check from the definition of activity.
\begin{lemma}
Let $G$ be a group generated by a bounded activity automaton. Then $G$ has finitely many singular orbits.
\end{lemma}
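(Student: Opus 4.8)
The plan is to reduce the statement about all of $G$ to a combinatorial finiteness statement about the generating automaton $S$, invoking boundedness only at the very end. Call a ray $\gamma=x_1x_2\cdots\in\partial T_d$ an \emph{$S$-singular} ray if there is a single state $s\in S$ whose section $s|_{x_1\cdots x_n}$ is non-trivial for infinitely many $n$. Since $S\subset G$, every $S$-singular ray is singular. The first goal is the converse at the level of orbits: an orbit is singular if and only if it contains an $S$-singular ray.

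For the non-trivial direction I would argue by contraposition. Fix $g=s_1\cdots s_k\in G$ with $s_i\in S$ and a ray $\gamma$. Iterating the section formula $(ab)|_v=a|_{b(v)}\,b|_v$ gives, for $v=x_1\cdots x_n$,
\[ g|_{x_1\cdots x_n}=\prod_{i=1}^{k} s_i\big|_{(\gamma_i)_n},\qquad \gamma_i:=(s_{i+1}\cdots s_k)\gamma, \]
where $(\gamma_i)_n$ denotes the length-$n$ prefix of the ray $\gamma_i$ (here one uses that a tree automorphism sends the prefix of $\gamma$ to the prefix of its image). Each $\gamma_i$ lies in the $G$-orbit of $\gamma$. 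Hence, if no ray in the orbit of $\gamma$ is $S$-singular, then for every $i$ the state $s_i$ has eventually trivial section along $\gamma_i$, so every factor $s_i|_{(\gamma_i)_n}$ is trivial once $n$ is large, whence $g|_{x_1\cdots x_n}=e$ eventually. As $g$ was arbitrary, $\gamma$ is regular. This shows that a singular orbit must contain an $S$-singular ray, and thereby identifies the singular orbits as precisely those meeting the set of $S$-singular rays.

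The second step is to count $S$-singular rays. Call a vertex $v$ \emph{active} if $s|_v\neq e$ for some $s\in S$. Because $s|_{wa}=(s|_w)|_a$, activity passes to ancestors, so the active vertices form a rooted subtree $\mathcal T\subset T_d$, and boundedness of $S$ says exactly that $\mathcal T$ has at most $C:=\sup_n p_S(n)<\infty$ vertices on each level. Along a single ray the prefixes are totally ordered and ``active'' is downward closed, so $\gamma$ is $S$-singular precisely when all of its prefixes are active (finiteness of $S$ upgrades ``infinitely many non-trivial sections'' to ``all prefixes active'' after passing to a state occurring infinitely often), i.e. precisely when $\gamma$ is an infinite ray contained in $\mathcal T$. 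Finally, $C+1$ distinct such rays would be pairwise eventually distinct and would produce more than $C$ active vertices at some level, contradicting the width bound; hence $\mathcal T$ has at most $C$ infinite rays.

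Combining the two steps, the $S$-singular rays number at most $C$ and therefore meet at most $C$ distinct orbits, and by the first step every singular orbit is among these; thus $G$ has finitely many singular orbits. The only genuinely delicate point is the orbit-level reduction of the first step: singularity of $\gamma$ tested against a general $g\in G$ is governed not by $\gamma$ alone but by the behaviour of the generators along the translated rays $\gamma_i$, which is exactly why the correct finiteness statement lives at the level of orbits rather than individual rays. Everything else is a routine bounded-width-tree argument.
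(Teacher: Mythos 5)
Your proof is correct. The paper itself gives no argument for this lemma (it is declared ``straightforward to check from the definition of activity''), and your two-step write-up --- reducing singularity of an orbit to $S$-singularity of some ray in it via the iterated section formula $(s_1\cdots s_k)|_v=\prod_{i=1}^{k} s_i|_{(s_{i+1}\cdots s_k)(v)}$, and then bounding the number of $S$-singular rays by the uniform width $C=\sup_n p_S(n)$ of the downward-closed active subtree --- is precisely the routine verification the paper gestures at, carried out carefully and with the one genuinely delicate point (that singularity must be tested orbit-wise, not ray-wise) correctly identified and handled.
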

In particular, it follows from minimality that
\begin{cor}\label{C: regular ray}Let $G$ be a level-transitive bounded automaton group and let $\gamma\in \partial T_d$ be any regular ray. Then the Schreier dynamical system $X$ of $G$ is given by
\[ X= \overline{ \{\stab(g\gamma) \: | \: g\in G \}}.\]
\end{cor}

We now state the following definition.
\begin{defin}
Let $G$ be a bounded automaton group. The \emph{full group} of $G$, denoted $[[G]]$, is the topological full group of the Schreier dynamical system $G\acts X$.
\end{defin}

Recall that we denote $[[G]]_t$ the subgroup generated by torsion elements, and $[[G]]_t'$ its commutator subgroup. It immediately follows from Theorem \ref{T: simplicity} that
\begin{cor}\label{C: simple}
If $G$ is level-transitive, the group $[[G]]_t'$ is simple.
\end{cor}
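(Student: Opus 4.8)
The plan is to recognise $[[G]]$ as the topological full group $[[\G]]$ of an étale groupoid $\G$ to which Theorem \ref{T: simplicity} directly applies, so that no new argument is needed. Concretely, I would take $\G$ to be the groupoid of germs of the action $G\acts X$ (equivalently, the action groupoid $G\times X$: by the Example following the definition of $[[\G]]$ these two groupoids have the same topological full group). Under this identification the definition of $[[G]]$ as the topological full group of the Schreier dynamical system $G\acts X$ gives $[[G]]=[[\G]]$, and hence $[[G]]_t=[[\G]]_t$ and $[[G]]_t'=[[\G]]_t'$. It then suffices to check that $\G$ meets the two hypotheses of Theorem \ref{T: simplicity}: that its unit space is compact and totally disconnected, and that $\G$ is minimal.

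I would verify both hypotheses straight from the construction of the Schreier dynamical system. The unit space of $\G$ is $X$, which by definition is a closed subset of $\sub(G)$; since $G$ is countable, $\sub(G)$ sits as a closed subspace of $\{0,1\}^G$, which is compact and totally disconnected, so $X$ inherits both properties. For minimality, recall that $\G$ is minimal exactly when every $G$-orbit in $X$ is dense. This is precisely the content of the Remark following the definition of the Schreier dynamical system: level-transitivity of $G$ on $T_d$ is equivalent to minimality of the boundary action $G\acts\partial T_d$, and the associated stability system $G\acts X$ is then minimal as well. Thus the level-transitivity hypothesis of the corollary feeds in exactly at this point.

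With these two verifications in place, Theorem \ref{T: simplicity} applies verbatim and yields that $[[\G]]_t'=[[G]]_t'$ is simple. There is no genuine obstacle: the corollary is a direct specialisation of Theorem \ref{T: simplicity}, as the paper indicates. The only point requiring a moment's care is the identification of the topological full group of the action $G\acts X$ with that of its associated groupoid, which is furnished by the Example; once this bookkeeping is settled, the dependence on the hypotheses (compactness and total disconnectedness of $X$, and minimality via level-transitivity) is transparent. One could in principle bypass the groupoid formulation and instead re-run Lemma \ref{L: infinitesimally generated} and Proposition \ref{P: normal contains commutator} for $G\acts X$ by hand, but routing the argument through Theorem \ref{T: simplicity} is cleaner and avoids duplicating work.
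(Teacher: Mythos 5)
Your proposal is correct and matches the paper's own (one-line) proof: the corollary is stated there as an immediate consequence of Theorem \ref{T: simplicity}, and your verifications---that $X$ is compact and totally disconnected as a closed subset of $\sub(G)\subset\{0,1\}^G$, and that level-transitivity yields minimality of $G\acts X$ via the stability-system construction---are exactly the bookkeeping the paper leaves implicit. Nothing further is needed.
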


The group $[[G]]_t'$ is not, in general, finitely generated, as the following lemma shows:

\begin{lemma}\label{L: basilica}
Assume that $G$ acts topologically regularly on   $\partial T_d$. Then the group $[[G]]_t'$ is not finitely generated unless $X$ is finite.
\end{lemma}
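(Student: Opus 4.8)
The plan is to derive non-finite generation from local finiteness: a group that is locally finite and infinite is never finitely generated, since any finitely generated locally finite group is finite. (Simplicity of $[[G]]_t'$, from Corollary \ref{C: simple}, is available but is not needed for this implication.) So it suffices to show that, under the topological regularity hypothesis, $[[G]]_t'$ is locally finite and infinite; the case where $X$ is finite is trivial, since then $[[G]]\le \Sym(X)$ is finite. Infiniteness is the easy half: $X$ is an infinite minimal system, so for every $N$ minimality together with the explicit construction of $3$-cycles in the proof of Theorem \ref{T: simplicity} lets one embed a finite alternating group $A_N$ into $[[G]]_t'$.

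First I would record what the hypotheses give. By Lemma \ref{L: regularity implies continuity}, topological regularity is the continuity of $\stab$ on all of $\partial T_d$, so $X=\stab(\partial T_d)$ is a continuous $G$-equivariant image of $\partial T_d$ and the germ groupoid of the boundary action is principal; transferring this through the factor map to obtain principality of the germ groupoid $\G$ of $G\acts X$ is a first point needing care. The second input is bounded activity: along any regular ray the sections of every $g\in G$ are eventually trivial, so at a regular point the germ of $g$ agrees with that of a \emph{finitary} automorphism (one altering only finitely many initial letters), and by the lemmas recalled above only finitely many $G$-orbits fail to be regular. Let $\Rcal\le\G$ be the subgroupoid of finitary germs. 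Filtering by the depth of the finitary automorphism realises $\Rcal$ as an increasing union of elementary subgroupoids $\Rcal^{(n)}$, so $\Rcal$ is an AF-groupoid; consequently $[[\Rcal]]=\bigcup_n[[\Rcal^{(n)}]]$ is an increasing union of finite groups and hence locally finite (this is the mechanism underlying Theorem \ref{T: AF equivalences}, and no minimality of $\Rcal$ is required). Note that I am \emph{not} claiming that $[[G]]$ is locally finite --- indeed $G\le[[G]]$ may contain elements of infinite order --- only that the finitary part is.

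The argument would then be complete if one could prove the key claim that every torsion element of $[[G]]$ has only finitary germs, i.e. $[[G]]_t\le[[\Rcal]]$, for then $[[G]]_t$ and a fortiori $[[G]]_t'$ would be locally finite. This claim is the main obstacle, and it is exactly here that topological regularity is indispensable and that the contrast with the mother group (whose boundary action is not topologically regular, and for which $[[M]]_t'$ is on the contrary finitely generated) appears. Over the regular orbits the claim is automatic from bounded activity; the difficulty is the finitely many singular orbits, on which $G$ acts non-finitarily. The dangerous configuration is an order-$m$ element cyclically permuting clopen sets meeting singular orbits, implemented by non-finitary germs whose product around the cycle is trivial. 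Principality of $\G$ only yields that the return map of such an element at a periodic point is trivial \emph{as a homeomorphism}, which does not by itself forbid non-finitary germs; genuinely finer information about the structure of the orbital graphs at the finitely many singular orbits is needed to exclude them, and this is the crux I expect to be hardest. If the finitary claim turns out to be too strong as stated, I would instead package the same dichotomy as a strictly increasing chain of proper subgroups of $[[G]]_t'$ indexed by the tree level, exhausting $[[G]]_t'$: properness of each term again encodes that topological regularity prevents any finite set of torsion elements from reaching all scales, and since $[[G]]_t'$ is infinite this shows it is not finitely generated.
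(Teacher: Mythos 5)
Your strategy (prove that $[[G]]_t'$ is locally finite and infinite, hence not finitely generated) is a genuinely different route from the paper's, but it has a gap at exactly the point you flag yourself: the key claim $[[G]]_t\le[[\Rcal]]$ --- that every torsion element of the full group has only finitary germs --- is never proved, and nothing in the sketch makes progress on it. Your own discussion shows why it is delicate: principality of the germ groupoid only forces the \emph{return} germ of a torsion element at a periodic point to be trivial, and a trivial product of non-finitary germs around a cycle is a priori possible; ruling this out would require precisely the ``finer information about the orbital graphs at the singular orbits'' that you defer. The fallback you offer (a strictly increasing chain of proper subgroups exhausting $[[G]]_t'$) is also only asserted, not constructed; note that for a countable group the existence of such a chain is \emph{equivalent} to non-finite-generation, so it cannot be invoked as a shortcut --- it is a restatement of the goal. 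There are further unresolved secondary points you acknowledge: principality and the finitary subgroupoid $\Rcal$ live naturally over $\partial T_d$, and transferring them to $X$ through $\stab$ needs an argument, since topological regularity makes $\stab$ continuous and surjective onto $X$ but not necessarily injective. As it stands, the proposal is an outline with its central step missing, not a proof.

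For comparison, the paper's proof avoids all of this with a soft obstruction. It invokes the necessary condition of Matui and Cornulier that the topological full group of a Cantor system (and, by the same argument, $[[G]]_t$ and $[[G]]_t'$) can be finitely generated only if the action is conjugate to a \emph{subshift}, and then shows that $G\acts X$ is not a subshift when $X$ is infinite. Indeed, by Lemma \ref{L: regularity implies continuity}, topological regularity makes $\stab\colon \partial T_d\to X$ a continuous equivariant surjection, so every clopen $U\subset X$ pulls back to a clopen subset of $\partial T_d$, i.e.\ a finite union of level-$n$ cylinders; since the $G$-action preserves levels, $\stab^{-1}(U)$, and hence $U$, has finite orbit. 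Thus $G\acts X$ is a generalized odometer: the translates of any finite clopen partition form a finite family of sets, which cannot separate the points of an infinite $X$. Your observation that continuity of $\stab$ holds everywhere is exactly the right starting point, but the efficient way to exploit it is this subshift criterion rather than local finiteness; if your key claim could be proved, it would yield a strictly stronger structural statement (local finiteness of $[[G]]_t$) than the lemma needs.
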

An example of a bounded automaton group acting topologically regularly on $\partial T_d$ is given by the Basilica group. In fact, it follows from results in \cite{SchreierBasilica} that for the Basilica group, the map $\stab: \partial T_d \to \sub(G)$ is a homeomorphism onto its image.
\begin{proof}
Recall that the action $G\acts X$ is a \emph{subshift} if there exists a finite partition of $X$ in clopen subsets such that the $G$-translates of this partition separate points. A necessary condition for the groups $[[G]], [[G]]'$ to be finitely generated is that the action $G\acts X$ is a subshift~\cite{Matui:simple, Cornulier:Bourbaki}, and the same argument applies to $[[G]]_t, [[G]]_t'$ as well.
Hence it is enough to show that if $G$ is as in the statement, then the Schreier dynamical system $G\acts X$ is not a subshift. In fact, it is a \emph{generalized odometer}: the orbit of every clopen set is finite. Indeed by Lemma \ref{L: regularity implies continuity}, topological regularity of $G\acts \partial T_d$ implies that  $\stab: \partial T_d\to X$ is a continuous equivariant surjective map. Let $U\subset X$ be clopen. It is enough to show that the orbit of $\stab^{-1}(U)$ is finite. But $\stab^{-1}(U)\subset \partial T_d$ is clopen, hence consists of a finite union of cylinder corresponding to a deep enough level of the tree. Since the $G$-action preserves levels of the tree, the conclusion follows. \qedhere

\end{proof}

The group $[[G]]_t'$ is however finitely generated in some cases. In the next section, we will study the group $[[G]]$ when $G$ is the \emph{alternating mother group}. In this case the group $[[G]]_t'$ is finitely generated and, as we shall see, this is enough to prove Theorem \ref{T: main}.

\section{The Schreier dynamical system of the mother group}
\label{S: amenability}
 Mother groups are a family of bounded automaton groups, first defined in~\cite{Bartholdi-Kaimanovich-Nekrashevych}, that contain all bounded automaton groups as subgroups \cite{Bartholdi-Kaimanovich-Nekrashevych, Amir-Angel-Virag:linear}. We work with a variant of the original definition defined only in term of alternating permutations, considered by Brieussel in \cite{Brieussel:folner}. 
\begin{defin}
Fix $d\geq 5$. The (alternating) \emph{mother group}  over $d$ elements is the group $M<\aut(T_d)$  generated by the two finite subgroups $A,B<\aut(T_d)$, where
\begin{itemize}
\item $A$ is the alternating group over $d$ elements acting on the first level with trivial sections.
\item $B$ is the set of all elements having a wreath recursion of the form
 \[g=(g, \sigma_1,\cdots, \sigma_{d-1})\rho,\]
 where $\sigma_1\cdots \sigma_{d-1}, \rho$ are alternating permutations  and $\rho$ fixes $0$.

\end{itemize}

\end{defin}

Observe that $B$ is a finite subgroup of $\aut(T_d)$, isomorphic to the permutational wreath product $A_d\wr_{\{1, \cdots, d-1\}} A_{d-1}$ where $A_d$ is the alternating group.

The interest of the mother group relies on the following fact:
\begin{thm}[\cite{Bartholdi-Kaimanovich-Nekrashevych}, \cite{Amir-Angel-Virag:linear}]\label{T: mothergroup contains all}
Let $G$ be a bounded automaton group. Then $G$ embeds in an alternating mother group (over a possibly bigger alphabet).
\end{thm}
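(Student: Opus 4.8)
The plan is to build an injective self-similar homomorphism $\Phi\colon G\hookrightarrow M$, where $M$ is the alternating mother group over a suitably enlarged alphabet. Since a self-similar group is determined by its wreath recursion, it suffices to define $\Phi$ on a generating automaton of $G$ and to check that the assignment is compatible with the section homomorphisms $\varphi_v$; faithfulness will be automatic, as $\Phi$ is realized by an action on an embedded copy of $T_d$. First I would invoke the structure theory of bounded automata (Sidki's classification of activity, as presented in \cite{Nekrashevych:book, Bartholdi-Kaimanovich-Nekrashevych}): a bounded automaton group $G<\aut(T_d)$ is generated by finitely many \emph{finitary} automorphisms (those with only finitely many non-trivial sections) together with finitely many \emph{directed} automorphisms, where a directed automorphism $g$ has a single eventually periodic spine ray, along which the sections of $g$ recur periodically, while all of its off-spine sections are finitary of uniformly bounded depth.

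Second, I would put these generators into a normal form by \emph{telescoping}. Regrouping $T_d$ into $T_{d^{L}}$ over the super-alphabet $\A^{L}$ is a tree isomorphism embedding $\aut(T_d)$ into $\aut(T_{d^{L}})$, and choosing $L$ to be a common multiple of all spine periods and strictly larger than all off-spine depths has two effects: every spine becomes period one (so a directed generator acquires a wreath recursion $g=(\dots)\bar\rho$ whose section at a single super-letter equals $g$ itself), and every finitary piece---both the finitary generators and the off-spine decorations---becomes a permutation of the super-alphabet with trivial sections. At this point each generator already has the shape of a mother-group generator, except that its local permutations need not be even.

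Third, I would make all permutations alternating by adjoining two extra letters to the super-alphabet and applying the standard embedding $S_n\hookrightarrow A_{n+2}$ that multiplies each odd permutation by the transposition of the two new letters; this transposition fixes every old letter (in particular the spine letter and the letters fixed by each root permutation), and the new letters are assigned trivial sections, so the recursion below is unaffected. The finitary generators then lie in the finitary alternating subgroup of $M$, which is genuinely contained in $M$: one checks, for instance, that commutators $[a,b]$ with $a\in A$ and $b\in B$ produce non-trivial finitary automorphisms, and that elements of this form generate all finitary alternating automorphisms. Each directed generator now has exactly the form defining $B$, once its spine direction is identified with the distinguished letter $0$.

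The hard part is precisely this last identification. The subgroup $B$ supplies directed elements along the single ray $0^{\infty}$ only, whereas the directed generators of $G$ may have several genuinely distinct periodic spines, and no tree automorphism straightens distinct rays onto a common one. The key device---the technical core of the arguments of \cite{Bartholdi-Kaimanovich-Nekrashevych} and \cite{Amir-Angel-Virag:linear}---is to enlarge the alphabet further so that it encodes the finite state set of the automaton: the arbitrary alternating off-spine decorations $\sigma_1,\dots,\sigma_{d-1}$ available in a single $B$-generator provide enough ``slots'' to record the automaton's transitions, so that the whole directed part of $G$, including its several cycles, is simulated by a bounded number of $B$-generators sharing the spine $0^{\infty}$ and conjugated by elements of the finitary alternating subgroup. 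Once the target alphabet is chosen to carry this encoding, $\Phi$ sends the finitary generators into the finitary alternating subgroup and the directed generators into $\langle A,B\rangle=M$, and a level-by-level verification of the wreath recursions shows that $\Phi$ is a well-defined injective homomorphism, proving the theorem.
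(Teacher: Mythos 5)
Your first three steps (Sidki's finitary/directed decomposition of bounded automata, telescoping to make spines periodic of period one and finitary pieces rooted, and the $S_n\hookrightarrow A_{n+2}$ trick with trivial sections at the two new letters) are sound, and the last of these is a legitimate variant of the only thing the paper itself actually proves in connection with this statement: the paper does \emph{not} reprove the embedding theorem, it cites \cite{Bartholdi-Kaimanovich-Nekrashevych, Amir-Angel-Virag:linear} for the embedding into the symmetric-permutation mother group, and then remarks that the symmetric mother group over $d$ letters embeds into the alternating one over $2d$ letters. So the only part of your write-up that is genuinely yours to establish is your fourth step, and that is exactly where there is a gap.

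The gap is the case of directed generators whose spines lie in \emph{different cofinality classes} (e.g.\ spines $1^\infty$ and $2^\infty$; telescoping never merges these, it only turns them into $(1^L)^\infty$ and $(2^L)^\infty$). By Lemma \ref{L: properties mother group}(2), the unique singular orbit of $M$ is the cofinality class of $0^\infty$: every element of $M$ has eventually trivial sections along every ray not cofinal with $0^\infty$. Consequently no embedding $\Phi\colon G\to M$ can be ``realized by an action on an embedded copy of $T_d$'': a level-respecting equivariant embedding of rooted trees sends the spine of a directed element to a singular ray of its image and preserves non-cofinality, so two non-cofinal spines of $G$ would have to land in the single cofinality class of $0^\infty$, a contradiction. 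This breaks your claim that faithfulness is automatic, and it also shows that the one concrete mechanism you name for the hard step --- simulating the directed part by spine-$0^\infty$ generators of $B$ ``conjugated by elements of the finitary alternating subgroup'' --- cannot possibly suffice, since conjugation by finitary elements moves singular rays only within their cofinality class. The true embedding of \cite{Bartholdi-Kaimanovich-Nekrashevych, Amir-Angel-Virag:linear} is necessarily non-geometric: it rests on a recoding of the boundary that breaks cofinality (in the spirit of the recoding $X\simeq\widetilde{X}$ in Section 4 of this paper, where letters are pushed ``to infinity''), and verifying that the resulting assignment on generators extends to an injective homomorphism is the actual content of the theorem. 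As written, your fourth step is an appeal to the cited papers rather than an argument, with its only explicit mechanism being one that provably fails. (A smaller glossed-over point: after telescoping, a directed generator $g$ with $g|_x=g$ need not have root permutation fixing $x$, whereas the definition of $B$ requires $\rho(0)=0$; this twist must also be absorbed somewhere.)
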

\begin{remark}
In fact, the mother groups defined in \cite{Bartholdi-Kaimanovich-Nekrashevych, Amir-Angel-Virag:linear} are slightly different: generators are defined by  same recursive rules but without the constraint that the permutations involved belong to the alternating group. However, it is straightforward to see that the mother group over $d$ elements embeds in the alternating mother group over $2d$ elements.
\end{remark}

 
The following fact is proven by Brieussel in~\cite{Brieussel:folner}.
 
\begin{lemma}[Proposition 3.1 in~\cite{Brieussel:folner}]\label{L: Brieussel}
 If $d\geq 5$ the wreath recursion map defines an isomorphism 
 \[M\tilde{\to}  \oplus_\mathcal{A} M\rtimes A_d= M\wr_\mathcal{A}A_d\]
 where $A_d$ is the alternating group.
 \end{lemma}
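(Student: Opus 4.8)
The plan is to show that the wreath recursion $\psi\colon M\to M\wr_{\A}A_d$, $\psi(g)=(g|_0,\dots,g|_{d-1})\sigma_g$ (with $\sigma_g\in\Sym(\A)$ the action of $g$ on the first level), is a well-defined isomorphism; the only real work is surjectivity. For well-definedness I would check two things on the generators $A,B$ and propagate them by the cocycle identity $(gh)|_v=g|_{h(v)}\,h|_v$: that $M$ is self-similar (elements of $A$ have trivial sections, and for $g=(g,\sigma_1,\dots,\sigma_{d-1})\rho\in B$ the sections are $g\in B$ and $\sigma_i\in A_d=A$, all in $M$); and that every $g\in M$ acts on the first level by an even permutation (clear for $A$, while for $B$ the first-level permutation is $\rho\in A_{d-1}\le A_d$). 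Injectivity is immediate, since $M$ acts faithfully on $T_d$ and an automorphism fixing the first level with trivial first-level sections is trivial, so $\ker\psi=1$.

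For surjectivity, note first that $A$ already yields the whole top group $(1,\dots,1)\sigma$, $\sigma\in A_d$, and that $A_d$ permutes $\A$ transitively; hence it suffices to put the base group $\bigoplus_\A M$ in $\psi(M)$, and for this it is enough to produce, for every $m\in M$, the single-coordinate element $\delta_0(m):=(m,1,\dots,1)$, since conjugating by the top group moves $m$ to any slot and products then fill $\bigoplus_\A M$. Here lies the main obstacle: the recursion constantly reproduces the localized element (the $0$-section of a $B$-generator is the generator itself), so naive cancellations never terminate, and the difficulty is to control all coordinates at once. I expect this upgrade, from controlling a single \emph{section} to controlling a single \emph{coordinate}, to be the heart of the proof.

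I would resolve it with a normal-subgroup argument, after first establishing the self-replicating property $\pi_0\big(\stab_M(\text{level }1)\big)=M$. Indeed $B_0:=\ker(B\to A_{d-1})\cong A_d^{\,d-1}$ lies in the first-level stabiliser with $b|_0=b$; the element $b_\rho\in B$ with trivial sections and top permutation $\rho\in A_{d-1}$ gives $a_\rho^{-1}b_\rho\in\stab$ with $0$-section $b_\rho$ (where $a_\rho\in A$ is the level-one permutation $\rho$); and conjugating a $B_0$-element with $\sigma_1=\tau$ by an even permutation of $A$ swapping the first two coordinates yields an element of $\stab$ with $0$-section $\tau$, realizing all of $A$. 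Since $\langle A,B_0,\{b_\rho\}\rangle=\langle A,B\rangle=M$, this gives $\pi_0(\stab)=M$.

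Now set $N=\{m\in M:\delta_0(m)\in\psi(M)\}$, a subgroup of $M$. Self-replication makes $N$ normal: given $m\in N$ and $g\in M$, choose $s\in\stab$ with $s|_0=g$; conjugating $\psi^{-1}(\delta_0(m))$ by $s$ produces $\delta_0(gmg^{-1})\in\psi(M)$, because the remaining coordinates of $\psi(s)$ only conjugate trivial entries to trivial entries. Moreover $b_\rho\in N$ for every $\rho\in A_{d-1}$, as $\psi(a_\rho^{-1}b_\rho)=\delta_0(b_\rho)$. The key computation is then that for $\rho,\alpha\in A_{d-1}$ the element $b_\rho^{-1}\,(a_\alpha b_\rho a_\alpha^{-1})$ is exactly the level-one permutation $\rho^{-1}\alpha\rho\alpha^{-1}=[\rho^{-1},\alpha]\in A$; hence $N\cap A$ contains $[A_{d-1},A_{d-1}]$, which is non-trivial for $d\ge 5$. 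As $N\cap A\trianglelefteq A=A_d$ and $A_d$ is simple, this forces $A\subseteq N$. Consequently single-coordinate copies $\delta_i(\tau)$ of every $\tau\in A_d$ are available in all slots, so multiplying $\psi(b_0)=(b_0,\sigma_1,\dots,\sigma_{d-1})$ by the $\delta_i(\sigma_i)^{-1}$ cleans the coordinates $i\ge 1$ and shows $B_0\subseteq N$. Therefore $N\supseteq\langle A,B_0,\{b_\rho\}\rangle=M$, i.e. $N=M$; this delivers the full base group, and together with the top group completes surjectivity, so $\psi$ is an isomorphism.
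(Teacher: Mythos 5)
Your proof is correct; I verified the key computations (the identity $\psi(a_\rho^{-1}b_\rho)=\delta_0(b_\rho)$, the conjugation formula $(shs^{-1})|_i=s|_i\,h|_i\,(s|_i)^{-1}$ underlying normality of $N$, and the crucial cancellation $b_\rho^{-1}(a_\alpha b_\rho a_\alpha^{-1})=a_{[\rho^{-1},\alpha]}$, in which all sections indeed collapse and only the top permutation survives). Note, however, that the paper itself contains no proof of this lemma: it is quoted as Proposition 3.1 of Brieussel's paper, so the comparison is with the literature rather than with an internal argument. Your packaging --- first the self-replication property $\pi_0\bigl(\stab_M(\text{level }1)\bigr)=M$, then the normal subgroup $N=\{m\in M:\delta_0(m)\in\psi(M)\}$, which meets $A$ nontrivially because $A_{d-1}$ is non-abelian and hence contains all of $A$ because $A_d$ is simple, after which the coordinates of $\psi(b)$, $b\in B_0$, can be cleaned by the elements $\delta_i(a_{\sigma_i})^{-1}$ --- is a clean and complete route to surjectivity, and it uses $d\geq 5$ exactly where it must (non-commutativity of $A_{d-1}$ and simplicity of $A_d$). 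It is the same mechanism of commutator manipulations with alternating permutations that drives Brieussel's own argument, organized here through a normal-subgroup trick that makes the bookkeeping transparent. One wording slip, easily repaired: there is no \emph{even} permutation that only swaps the first two coordinates; what your computation actually requires is an even $a\in A$ with $a(1)=0$, e.g.\ $a=(0\,1)(2\,3)$ (available since $d\geq 4$), and then $(aba^{-1})|_0=b|_{a^{-1}(0)}=b|_1=\tau$ exactly as you claim.
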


A consequence (that can also be proven directly by induction) is the following
\begin{cor}\label{C: totally non free}
The action $M\acts \partial T_d$ is totally non-free (two distinct points have different stabilizers).
\end{cor}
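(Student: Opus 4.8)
The plan is to prove the equivalent statement that the stabiliser map $\stab\colon \partial T_d \to \sub(M)$ is injective, arguing by induction on the level at which two distinct rays first differ. For distinct rays $\xi = x_1 x_2 \cdots$ and $\eta = y_1 y_2 \cdots$ it suffices to produce an element $g \in M$ that fixes exactly one of them, since such a $g$ witnesses $\stab(\xi) \neq \stab(\eta)$.

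For the base case, suppose $\xi$ and $\eta$ already differ at the first letter, $x_1 \neq y_1$. Since $d \geq 5$, the stabiliser of $x_1$ inside $A_d$ is a copy of $A_{d-1}$ acting on $\mathcal{A} \setminus \{x_1\}$ (a set of size $d-1 \geq 4$), so I can choose $\rho \in A_d$ with $\rho(x_1) = x_1$ and $\rho(y_1) \neq y_1$ (for instance a suitable $3$-cycle fixing $x_1$ and moving $y_1$). Viewing $\rho$ as an element of the subgroup $A < M$, namely the alternating group acting on the first level with trivial sections, the element $g = \rho$ fixes $\xi$ while moving $\eta$, so $g \in \stab(\xi) \setminus \stab(\eta)$.

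For the inductive step, suppose instead that $\xi$ and $\eta$ share their first letter $x_1 = y_1 = a$ and first differ at a deeper level. Then the shifted rays $\xi' = x_2 x_3 \cdots$ and $\eta' = y_2 y_3 \cdots$ are distinct and first differ one level earlier, so by the induction hypothesis there is $h \in M$ fixing exactly one of them, say $h \in \stab(\xi') \setminus \stab(\eta')$. By Lemma \ref{L: Brieussel} the wreath recursion is an isomorphism onto $M \wr_{\mathcal{A}} A_d$, so the tuple that places $h$ in the coordinate indexed by $a$, the identity in every other coordinate, and has trivial root permutation, defines a genuine element $g \in M$. By construction $g$ fixes $a$ and acts as $h$ on the subtree hanging below $a$, whence $g$ fixes $\xi$ and moves $\eta$, giving $g \in \stab(\xi) \setminus \stab(\eta)$.

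I expect essentially no deep obstacle here: the argument is driven entirely by the fullness of the wreath recursion. The only points requiring care are the two membership facts that make the construction work, and these should be stated carefully. In the base case membership uses that the entire alternating group $A_d$ embeds in $M$ as $A$; in the inductive step it uses the surjectivity half of Lemma \ref{L: Brieussel}, which is exactly what permits installing an arbitrary section below a single first-level vertex while keeping all other sections and the root permutation trivial. Keeping the bookkeeping of ``fixes exactly one'' consistent through the induction is the only thing to watch.
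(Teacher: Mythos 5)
Your proof is correct and is essentially the paper's argument: the paper also uses Lemma \ref{L: Brieussel} to produce $g\in M$ whose section at the longest common prefix $w$ is a permutation $\sigma\in A$ fixing the next letter of one ray and moving that of the other. Your induction on the level of first difference simply unfolds, level by level, the single invocation of the wreath-recursion isomorphism that the paper performs in one step.
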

\begin{proof}
Let $\gamma\neq \eta\in \partial T_d$ and let $w$ be their biggest common prefix. Let $x,y\in \mathcal{A}$ be the letters following $w$ in $\gamma, \eta$. Using Lemma \ref{L: Brieussel}, one can find $g\in M$ so that $g|w$ is an element of $A$ given by a permutation $\sigma$ such that $\sigma(x)=x$ and $\sigma(y)\neq y$.  Hence $g\gamma=\gamma$ but $g\eta\neq \eta$. \qedhere
\end{proof}

We also collect here same well-known facts about the group $M$ that we will use. They can be easily proven by induction, or proofs can be found e.g. in \cite{Amir-Virag:positivespeed}.
\begin{lemma}\label{L: properties mother group}
\begin{enumerate}
\item $M$ acts level-transitively on $T_d$.
\item Two rays $\gamma, \eta\in \partial T_d$ are in the same $M$-orbit if and only if they are cofinal. Moreover there is only one singular orbit, namely the orbit of $\gamma=0^\infty$.
\item Let $\rho=x_0x_1\cdots \in \partial T_d$ be in the orbit of $0^\infty$  and $g\in M$. Then the section $g|_{x_0\cdots x_n}$ is eventually constant and belongs to $B$. We denote $g|_\rho$ its eventual value.
\item The group $M$ is \emph{contracting} with nucleus $A\cup B$: for every $g\in M$ there exists $n$ so that all sections of $g$ at levels $r\geq n$ belong to  $A\cup B$.
\end{enumerate}
\end{lemma}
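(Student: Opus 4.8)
The plan is to establish these four facts by induction, treating the contracting property (item 4) as the crux and deriving the others from it together with the self-replicating structure furnished by Lemma \ref{L: Brieussel}.

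First I would prove item (4). The key point is that the finite set $\mathcal{N}=A\cup B$ is closed under taking sections: the sections of $a\in A$ are trivial below the first level, while for $b=(b,\sigma_1,\ldots,\sigma_{d-1})\rho\in B$ one has $b|_0=b\in B$ and $b|_i=\sigma_i\in A$ for $i\neq 0$, so all sections of $b$ lie in $\mathcal{N}$. Given this, contraction with nucleus $\mathcal{N}$ follows by the standard induction on word length in the generating set $A\cup B$, once one checks the pairwise contraction statement: for any $g,h\in A\cup B$, all sections of $gh$ at level $\geq 2$ lie in $\mathcal{N}$. This is a short case analysis. When $g,h\in A$ or exactly one lies in $A$, the sections at the first level already lie in $\mathcal{N}$, using that $A$ is a group and that elements of $A$ have trivial sections. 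The only delicate case is $g=b$, $h=b'$ both in $B$: writing $b'=(b',\sigma'_1,\ldots)\rho'$ with $\rho'(0)=0$, the section $(bb')|_0=bb'$ lies in $B$ because $B$ is a subgroup; for $j$ with $\rho'(j)=0$ one gets $(bb')|_j=b\sigma'_j$ whose first-level sections are all of the form $b|_m\in\mathcal{N}$ (again as $\sigma'_j\in A$ has trivial sections); and for the remaining $j$ the section is a product of two elements of $A$, hence in $A$. Thus all level-$2$ sections of $bb'$ lie in $\mathcal{N}$, completing the pairwise check and hence (4). I expect this product-of-two-$B$-elements case, resolved by the fact that $A$ and $B$ are genuine subgroups, to be the main obstacle.

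Next, item (1) follows from self-replication. By Lemma \ref{L: Brieussel}, the wreath recursion identifies $M$ with $M\wr_{\mathcal{A}}A_d$; in particular $A\cong A_d$ acts transitively on the first level (as $d\geq 5$), and the base group $\bigoplus_{\mathcal{A}}M$ embeds into $M$, so the section homomorphism $\varphi_0\colon\stab(0)\to M$ is onto. Level-transitivity then follows by induction on $n$: use $A$ to send the first letter of any level-$(n+1)$ vertex to $0$, then combine an element of $\stab(0)$ with prescribed section at $0$ with the inductive transitivity on level $n$. For item (2) I would first check directly from the recursion that each generator maps any ray to a cofinal one: $a\in A$ changes only the first letter, while $b\in B$ preserves an initial block of $0$'s and alters at most two letters once the ray leaves the $0$-branch. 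Since cofinality is transitive, induction on word length shows $g\gamma$ is cofinal to $\gamma$ for every $g\in M$, giving ``same orbit $\Rightarrow$ cofinal''. For the converse, if $\gamma=v\xi$ and $\eta=w\xi$ with $|v|=|w|=n$, choose $h$ with $h(v)=w$ by (1), then correct the resulting tail using an element that fixes level $n$ pointwise and has prescribed section at $w$ (available by self-replication), moving $h|_v(\xi)$ back to $\xi$. Finally, that the orbit of $0^\infty$ is the unique singular orbit follows from (4): along the all-zero tail the section $b|_{0^n}=b$ never dies, so $0^\infty$ is singular; conversely, if $\gamma$ has infinitely many nonzero letters then, once a deep section has entered $\mathcal{N}$ by (4), any subsequent nonzero letter pushes it from $B$ into $A$ and then to the identity, where it remains, so $\gamma$ is regular.

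Item (3) is then immediate: if $\rho$ lies in the orbit of $0^\infty$ it is eventually constantly $0$, so for $n$ large enough the section $g|_{x_0\cdots x_n}$ lies in $\mathcal{N}$ by (4) and the ray continues with $0$'s. If this section lies in $A$ its next section is trivial and stays trivial, while if it lies in $B$ it satisfies $h|_0=h$ and is therefore constant thereafter. In both cases the eventual value lies in $B$, since $e\in B$.
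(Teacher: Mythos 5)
Your proof is correct; the paper itself gives no argument for this lemma (it states only that the items ``can be easily proven by induction'', with a pointer to the literature), and your argument --- the section-closedness of $A\cup B$ plus the pairwise contraction check feeding a standard induction on word length for item (4), then items (1)--(3) deduced from the self-replication supplied by Lemma \ref{L: Brieussel} together with the cofinality analysis --- is exactly the kind of inductive proof the paper has in mind. One harmless blemish: since $\rho'$ fixes $0$, your sub-case ``$j$ with $\rho'(j)=0$'' is vacuous for $j\neq 0$, so in the two-$B$-elements case the only sections that actually occur at the first level are $(bb')|_0=bb'\in B$ and $(bb')|_j=\sigma_{\rho'(j)}\sigma'_j\in A$ for $j\neq 0$; your handling of the vacuous case is itself correct, so nothing is affected.
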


From now on, we shall fix $d\geq 5$, and denote $M\acts X$ the Schreier dynamical system of $M$, and $\M$ its groupoid of germs. We further denote $[[M]]$ the topological full group of $\M$. Recall that we denote $[[M]]_t$ the subgroup of $[[M]]$ generated by torsion elements, and $[[M]]_t'$ the commutator subgroup of $[[M]]_t$.
We have
\begin{lemma}
 If $d\geq 6$, $M$ embeds in $[[M]]_t'$.
 \end{lemma}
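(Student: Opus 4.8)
The plan is to show that the natural homomorphism $\pi\colon M\to [[M]]$ induced by the action $M\acts X$ — which sends $g\in M$ to the homeomorphism of $X$ it induces, an element of $[[M]]$ since it is everywhere a germ of the $M$-action — is injective and has image contained in $[[M]]_t'$. These two facts together yield the desired embedding $M\hookrightarrow [[M]]_t'$.

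For the containment in $[[M]]_t'$, I would first note that the generating subgroups $A$ and $B$ of $M$ are finite, so $\pi(A)$ and $\pi(B)$ consist of torsion elements of $[[M]]$; since $M=\langle A,B\rangle$ it follows that $\pi(M)\subseteq [[M]]_t$. It then suffices to prove that $\pi(M)$ is perfect, because then $\pi(M)=\pi(M)'\subseteq [[M]]_t'$. In fact $M$ itself is perfect when $d\ge 6$: the group $A\cong A_d$ is perfect for $d\ge 5$, while $B\cong A_d\wr_{\{1,\dots,d-1\}}A_{d-1}$ is perfect precisely when both $A_d$ and $A_{d-1}$ are perfect (the abelianization of a permutational wreath product is assembled from those of its factors), i.e.\ precisely when $d\ge 6$. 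Being generated by the two perfect subgroups $A$ and $B$, the group $M$ is then perfect, and its perfect quotient $\pi(M)$ is too. This is exactly the step where the hypothesis $d\ge 6$ is used.

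The main point, and the one I expect to require the most care, is the injectivity of $\pi$, i.e.\ faithfulness of the conjugation action $M\acts X$. Suppose $g\in M$ acts trivially on $X$. Fixing a regular ray $\gamma$, Corollary \ref{C: regular ray} shows that $\{\stab(h\gamma):h\in M\}$ is dense in $X$, and the conjugation action reads $g\cdot\stab(h\gamma)=\stab(gh\gamma)$; hence $\stab(gh\gamma)=\stab(h\gamma)$ for every $h\in M$. By total non-freeness of $M\acts\partial T_d$ (Corollary \ref{C: totally non free}), distinct boundary points have distinct stabilizers, so $gh\gamma=h\gamma$ for all $h$. Thus $g$ fixes the entire $M$-orbit of $\gamma$, which by Lemma \ref{L: properties mother group}(2) is the set of all rays cofinal with $\gamma$ and is therefore dense in $\partial T_d$. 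Since the fixed-point set of $g$ in $\partial T_d$ is closed, $g$ fixes $\partial T_d$ pointwise; as $\aut(T_d)$ acts faithfully on the boundary, $g=e$. This establishes injectivity of $\pi$ and, combined with the previous paragraph, completes the embedding $M\cong\pi(M)\le [[M]]_t'$.
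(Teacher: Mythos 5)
Your proof is correct and follows essentially the same route as the paper's: faithfulness of $M\acts X$ via a regular ray, density of its orbit, equivariance of the stabiliser map and total non-freeness (Corollary \ref{C: regular ray} and Corollary \ref{C: totally non free}), followed by a torsion/perfection argument to land in $[[M]]_t'$. The only difference is cosmetic, in the second step: the paper writes each generator of $A$ and $B$ as a product of commutators of torsion elements by decomposing $B$ into alternating subgroups $B_0,\dots,B_{d-1}$, whereas you deduce perfection of $B$ (hence of $M$, hence of its image in $[[M]]_t$) from the abelianization of the wreath product $A_d\wr_{\{1,\dots,d-1\}}A_{d-1}$ --- both hinge on exactly the same point, namely that $A_{d-1}$ is perfect, i.e.\ that $d\ge 6$.
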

 \begin{proof}
First observe that the action $M\acts X$ is faithful. Namely let $\gamma \in T_d$ be a regular ray and $O(\gamma)$ be its orbit. By Lemma \ref{L: properties mother group} 1 $O(\gamma)$ is dense in $\partial T_d$ and thus $M\acts O(\gamma)$ is faithful. By Lemma \ref{C: regular ray} the space $X$ is given by the closure of stabilizers of points in $O(\gamma)$, and by Corollary  \ref{C: totally non free}Ê the stabiliser map restricted to $O(\gamma)$ is injective. It follows that $X$ contains an invariant subset on which the action is faithful. Hence $M$ embeds in $[[M]]$. To check that it is actually contained in $[[M]]_t'$, it is enough to show that generators in $A$ and $B$ can be written as commutators of torsion elements. This is obvious for generators in $A$ (since $A$ is the alternating group over $d\geq 6$ elements), and observe that $B$ is generated by its subgroups $B_0, B_1,\ldots B_{d-1}$ where $B_0$ is the subgroup consisting of elements with all the $\sigma_i$ trivial (it is thus isomorphic to an alternating group over $d-1\geq 5$ elements) and for $1\leq i \leq d-1$ $B_i$ is the subgroup of $B$ consisting of elements that have $\rho$ and $\sigma_j$ trivial for all $j\neq i$ (it is thus isomorphic to the alternating group over $d$ elements).

\end{proof}

 Thus, Theorem \ref{T: main} follows from the combination of Theorem \ref{T: mothergroup contains all} with
\begin{thm}\label{T: full group mother group}
The group $[[M]]_t'$ is a finitely generated, simple amenable group.
\end{thm}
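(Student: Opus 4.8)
The statement to prove is Theorem \ref{T: full group mother group}: the group $[[M]]_t'$ is a finitely generated, simple, amenable group. These three properties require separate arguments, so the plan is to establish each in turn, drawing on the results already assembled in the excerpt and on the outline given in the introduction (which promises amenability in Section 4 and finite generation in Section 5).

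The plan is to dispatch \emph{simplicity} first, since it is essentially already in hand: by Corollary \ref{C: simple}, applied with $G=M$ (which acts level-transitively by Lemma \ref{L: properties mother group}(1), so that the Schreier dynamical system $M\acts X$ is minimal), the group $[[M]]_t'$ is simple. This reduces the theorem to establishing finite generation and amenability. For \emph{amenability}, the strategy indicated in the introduction is to show that $[[M]]$ is amenable, from which amenability of the subgroup $[[M]]_t'$ follows automatically (amenability passes to subgroups). The route to amenability of $[[M]]$ is to encode the Schreier dynamical system $M\acts X$ by a Bratteli diagram representation, so that the groupoid $\M$ is understood as an extension of an AF-groupoid (the tail groupoid $\H_B$ of Theorem \ref{T: AF equivalences}) by controlled additional germs, and then to invoke the amenability criterion from \cite{Juschenko-Nekrashevych-Salle:recurrentgroupoids}. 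The contracting property and the wreath-recursion structure (Lemma \ref{L: Brieussel}, Lemma \ref{L: properties mother group}) are what make this explicit combinatorial encoding possible.

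For \emph{finite generation}, the obstacle flagged in Lemma \ref{L: basilica} must be avoided: finite generation of $[[G]]_t'$ can fail, and a necessary condition is that $M\acts X$ be a subshift rather than a generalized odometer. So the first task is to verify that $M\acts X$ \emph{is} a subshift — this is precisely where the level-transitivity and the presence of a nontrivial singular orbit (the orbit of $0^\infty$, by Lemma \ref{L: properties mother group}(2)) distinguish $M$ from the Basilica group, whose action is topologically regular. Once the subshift property holds, I would produce an explicit finite generating set for $[[M]]_t'$, using the description of $[[M]]_t'$ as the group generated by $3$-cycles (from the proof of Theorem \ref{T: simplicity}) and exploiting the self-similar structure to show that finitely many ``small'' cycles, together with the copy of $M$ embedded by the preceding lemma, suffice to generate all of $[[M]]_t'$.

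The hard part will be the finite generation argument. Amenability is delegated to an external criterion once the Bratteli encoding is in place, and simplicity is immediate from the general Theorem \ref{T: simplicity}; but proving finite generation requires genuinely new input. The difficulty is twofold: one must first confirm the subshift property for $M\acts X$ (which is not automatic and fails for related groups), and then one must build an explicit finite generating set and verify that the infinitely many $3$-cycles generating $[[M]]_t'$ can all be expressed in terms of it. This is where the fine structure of the mother group — its contracting nucleus $A\cup B$ and the wreath recursion $M\cong M\wr_{\mathcal{A}}A_d$ — will have to be used in an essential and quantitative way, and I expect the bulk of the work (Section 5) to be consumed by this step.
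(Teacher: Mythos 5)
Your decomposition is exactly the paper's: simplicity is quoted from Corollary \ref{C: simple}; amenability is obtained by conjugating $M\acts X$ to an action by homeomorphisms of bounded type on the path space of a Bratteli diagram (Theorem \ref{T: Bratteli}) and invoking the Juschenko--Nekrashevych--de la Salle criterion (Theorem \ref{T: JNS}), after which amenability of $[[M]]$ passes to the subgroup $[[M]]_t'$; and finite generation is attacked via the subshift property of $M\acts X$ plus an explicit generating set of small $3$-cycles. This matches Sections 4 and 5, including your correct observation that the subshift property is what separates $M$ from the topologically regular examples of Lemma \ref{L: basilica}. One subtlety your amenability sketch elides: the Bratteli path space is not $\partial T_d$ itself --- the paper must delete the singular orbit of $0^\infty$ and replace it by sequences carrying two extra letters ``at infinity'' (Definition \ref{D: tilde X}), precisely because the stabiliser map fails to be continuous on that orbit; the strategy is nonetheless the one you describe.

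The genuine gap is in finite generation, which is exactly where you yourself locate the bulk of the work. Asserting that ``finitely many small cycles together with the embedded copy of $M$ suffice to generate'' is a statement of the goal, not an argument: one needs a concrete mechanism for expressing the $3$-cycles $\eta_{U,s,t}$ supported on arbitrarily long Gray code cylinders (of which there are infinitely many isomorphism types) in terms of those of bounded length. The paper supplies two such mechanisms. The soft one: Lemma \ref{L: subshift} makes the groupoid of germs expansive, Nekrashevych's theorem then gives finite generation of the alternating subgroup $A(\M)$, and $A(\M)=[[M]]_t'$ because $A(\M)$ is a nontrivial normal subgroup of the simple group $[[M]]_t'$. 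The combinatorial one: Proposition \ref{P: marginals} shows that a central Gray code piece is determined by its two marginals, whence $C_{I, \gamma}=C_{I_l, \gamma}\cap C_{I_r, \gamma}$ (Lemma \ref{L: marginals and cylinders}); this intersection identity is what makes Matui's commutator trick (Lemma \ref{L: commutator trick}) run, expressing a $3$-cycle on a cylinder of length $2n+1$ as a commutator of $3$-cycles on cylinders of length $2n-1$, so that induction terminates at the finitely many cylinders of length $2n_0+1$. Without the expansivity route or an analogue of the marginals proposition, your induction on ``small cycles'' has no engine, so the proposal as written does not yet prove finite generation.
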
 
The rest of the paper is devoted to the proof of this result. Simplicity has already been established (see Corollary \ref{C: simple}).

\subsection{Bratteli diagram representation and amenability}

Let $B$ be a Bratteli diagram. Given a vertex $v$ of $B$, recall that we denote $T_v$ the \emph{tower} corresponding to $v$. Recall that this is the collection of all cylinders subsets of $X_B$ corresponding to paths ending in $v$.

\begin{defin}
 A homeomorphism $g$ of $X_B$ is said to be \emph{of bounded type} if for every $v\in B$ the number of cylinders $C_\gamma\in T_v$ so that $g|_{C_\gamma}$ is not equal to $\tau_{\gamma, \gamma'}$ for some $\gamma'$ is bounded uniformly in $v$, and the set of points $x\in X_B$ such that the germ of $g$ in $x$ does not belong to $\H_B$ is finite. 
\end{defin}
The following result is due to Juschenko, Nekrashevych and de la Salle \cite[Theorem 4.2]{Juschenko-Nekrashevych-Salle:recurrentgroupoids}. 

\begin{thm}[\cite{Juschenko-Nekrashevych-Salle:recurrentgroupoids}] \label{T: JNS}
Let $G$ be a group of homeomorphisms of bounded type of the path space $X_B$ of a Brattelli diagram, and $\mathcal G$ be the groupoid of germs of $G\acts X_B$. Assume that for every $x\in X_B$ the isotropy group $\G_x$ is amenable. Then $G$ is amenable. Moreover $[[\G]]$ is amenable.

\end{thm}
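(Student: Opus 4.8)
The plan is to prove first that $G$ is amenable, and then to obtain the amenability of $[[\G]]$ by applying the same statement to $[[\G]]$ itself: its elements are again homeomorphisms of bounded type, its groupoid of germs is $\G$, and so its isotropy groups coincide with those of $G$ and stay amenable. Since amenability is preserved under directed unions of subgroups, I may assume $G$ is finitely generated. The engine is the notion of \emph{extensive amenability} of an action (Juschenko--Monod): an action $Q\acts Y$ is extensively amenable if the induced action on the group $\bigoplus_Y\Z/2\Z$ of finite subsets of $Y$ admits a $Q$-invariant mean giving full weight to each cone $\{E:\,F\subseteq E\}$ with $F$ finite. I will use the Juschenko--Monod criterion (in the groupoid form of \cite{Juschenko-Nekrashevych-Salle:recurrentgroupoids}): for an amenable group $A$, the wreath product $\bigl(\bigoplus_Y A\bigr)\rtimes Q$ is amenable if and only if $Q$ is amenable and $Q\acts Y$ is extensively amenable.

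The first step is to realise $G$ inside such a wreath product over $Y=X_B$. By the bounded type hypothesis, every $g\in G$ agrees, off a finite set of points, with a tail move $\tau_{\gamma,\gamma'}$ of the AF-groupoid $\H_B$; globally this produces an element $q(g)$ of the tail full group $Q:=[[\H_B]]$, and $g$ differs from $q(g)$ only through germ-corrections at finitely many points, each correction lying in an isotropy group $\G_x$. Recording these corrections as a finitely supported function on $X_B$ valued in the bundle $A$ of isotropy groups, I get a homomorphism $G\to\bigl(\bigoplus_{X_B}A\bigr)\rtimes Q$, injective because a homeomorphism is determined by its germs. Here $Q=[[\H_B]]$ is locally finite by the Giordano--Putnam--Skau theorem (Theorem \ref{T: AF equivalences}), hence amenable, and $\bigoplus_{X_B}A$ is a direct sum of amenable groups. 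By the criterion above, it therefore remains only to prove that $Q\acts X_B$ is extensively amenable.

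This is the heart of the matter, and I would establish it via recurrence. The tail-equivalence orbit of a path $x=e_0e_1\cdots$ is the set of cofinal paths, organised as an increasing union of the finite towers $T_v$; the generators of $Q$ coming from the finite groups $H_n=[[\H_B^{(n)}]]$ turn each orbit into a Schreier graph. I would produce a probability measure $\mu$ on $Q$, supported on the subgroups $H_n$, whose induced random walk on these Schreier graphs is recurrent, and then invoke the principle (Juschenko--Monod, building on Kaimanovich) that recurrence of the orbital random walk implies extensive amenability of the action. Recurrence itself would be proved by a level-by-level estimate: the bounded type condition forces the number of relevant cylinders in each tower to be uniformly bounded, which controls the growth of the Schreier graphs and lets one propagate recurrence from one level of the Bratteli diagram to the next.

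The main obstacle is exactly this recurrence estimate. In the model case of a minimal $\Z$-action the orbital graphs are lines and recurrence is just the classical recurrence of the simple random walk on $\Z$; here the orbital graphs branch with the diagram, and one must exploit the bounded type hypothesis to keep the transition probabilities under control and the walk recurrent. Obtaining uniform bounds on return probabilities across all towers---so that branching in the Bratteli diagram does not destroy recurrence---is the technical core of \cite{Juschenko-Nekrashevych-Salle:recurrentgroupoids}, and it is where the hypothesis enters decisively. Granting recurrence, hence extensive amenability, the wreath-product criterion together with the amenability of the locally finite group $Q$ yields amenability of $G$ and, by the reduction above, of $[[\G]]$.
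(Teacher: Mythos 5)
There is a genuine gap, and it sits exactly where you placed the main weight of your argument. (Note first that the paper itself does not prove this statement: it imports it from \cite{Juschenko-Nekrashevych-Salle:recurrentgroupoids}, Theorem 4.2, so your proposal must be judged against that argument.) Your claimed embedding $G\hookrightarrow\bigl(\bigoplus_{X_B}A\bigr)\rtimes Q$ with $Q=[[\H_B]]$ does not exist. First, two homeomorphisms of a space without isolated points that agree off a finite set agree everywhere, so ``$g$ differs from $q(g)$ only through germ-corrections at finitely many points'' would force $g=q(g)$; corrections ``at points'' cannot be recorded as a finitely supported function while keeping homeomorphisms of $X_B$. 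Second, the bounded type hypothesis does not even furnish a single $q(g)\in[[\H_B]]$ canonically, let alone multiplicatively: the binary odometer is of bounded type with unique singular point $1^\infty$, and while it agrees with some tail full group element off each cylinder $C_{1^n}$, the resulting ``correction'' $g\,q(g)^{-1}$ is an honest homeomorphism supported on a clopen set (conjugate to the odometer itself, hence as complicated as $g$), not an element of any isotropy group, and the assignment $g\mapsto q(g)$ depends on choices and is not a homomorphism. Third, even formally $\bigoplus_{X_B}A$ is not a group: the isotropy groups $\G_x$ vary with $x$, and $Q$ has no action on this bundle, since $\H_B$ need not be contained in $\G$ and germs of tail transformations do not conjugate $\G_x$ to $\G_{qx}$. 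So the wreath-product criterion of Juschenko--Monod, correct as you state it, has nothing to apply to.

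A symptom that the architecture is wrong: after your reduction, the ``heart of the matter'' you propose --- extensive amenability of $Q\acts X_B$ --- is in fact trivial and needs no recurrence at all, since $Q=[[\H_B]]$ is locally finite by Theorem \ref{T: AF equivalences}, hence amenable, and every action of an amenable group is extensively amenable; yet the theorem is hard. In the actual argument of \cite{Juschenko-Nekrashevych-Salle:recurrentgroupoids}, recurrence of the tail pseudogroup's orbital graphs is used to establish extensive amenability of the action of $G$ itself (more precisely, of the groupoid generated by $G$ together with tail equivalence), with bounded type entering to compare $G$-orbital graphs to tail orbital graphs; and a further, genuinely separate step converts extensive amenability of the action plus amenability of the \emph{germ} isotropy groups $\G_x$ into amenability of $G$. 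Note that the naive criterion would require amenability of the point stabilizers $\stab_G(x)$, which is essentially the conclusion rather than a hypothesis; bridging $\G_x$ to $\stab_G(x)$ is a real step that your outline silently elides (your faulty embedding is what hid this circularity). On the positive side, your preliminary reduction is sound: $[[\G]]$ is again a group of bounded type homeomorphisms with the same groupoid of germs, so the ``moreover'' clause does follow by applying the main statement to $[[\G]]$, and extensive amenability plus recurrence is indeed the right toolbox --- but the proof as proposed collapses at its central construction.
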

Recall that $M\acts X$ is the Schreier dynamical system of the mothergroup. We denote $\M$ its groupoid of germs. We have:
\begin{thm}\label{T: Bratteli}
There exists a stationary, simple Bratteli diagram $B$ and a homeomorphism $X\simeq X_B$ that conjugates the action $M\acts X$ to an action by homeomorphisms of bounded type. Moreover
 the action $M\acts X_B$ is topologically regular (equivalently, $\M_x$ is trivial for every $x\in X_B$).
\end{thm}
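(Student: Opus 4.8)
The plan is to read the Bratteli diagram directly off the self-similar structure of $M$ and to realise $X$ as its path space by recording the ``manner of approach'' to the unique singular orbit. First I would pin down the points of $X$. By Corollary \ref{C: regular ray}, for any regular ray $\gamma$ we have $X=\overline{\{\stab(g\gamma):g\in M\}}$, and by Corollary \ref{C: totally non free} the stabiliser map is injective on the regular orbit $O(\gamma)$; hence $O(\gamma)$ embeds as a dense subset of $X$ and every point of $X$ is a limit of stabilisers of regular rays. The only source of non-trivial topology is the unique singular orbit, that of $0^\infty$ (Lemma \ref{L: properties mother group}): since elements of $B$ fix $0^\infty$ with non-trivial germ, $0^\infty$ is not topologically regular and $\stab$ is discontinuous there, so approaching it along different directions yields distinct limit subgroups. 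Thus each singular ray is replaced in $X$ by finitely many points carrying extra local data. Finiteness of this data is forced by the contraction property (Lemma \ref{L: properties mother group}, the nucleus being $A\cup B$), and its self-replicating nature is forced by the wreath isomorphism $M\cong M\wr_{\A}A_d$ of Lemma \ref{L: Brieussel}.

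Concretely I would define a refining sequence of finite clopen partitions $\mathcal P_n$ of $X$ whose incidence structure is the edge set of $B$: the vertex reached at level $n$ records the length-$n$ prefix of the underlying ray together with the finite decoration needed to separate the points lying over a singular ray. The wreath isomorphism $M\cong M\wr_{\A}A_d$ shows that the refinement of a cylinder from level $n$ to level $n+1$ is governed by a single transition rule — read off from how the $d$ first-level subtrees, together with the $B$-data at the coordinate $0$, subdivide the cylinder — which makes $B$ stationary. Simplicity of $B$ follows from the criterion recalled before Theorem \ref{T: AF equivalences} ($\H_B$ is minimal if and only if $B$ is simple) once one identifies the tail equivalence of $X_B$ with cofinality of rays and invokes minimality of $M\acts X$. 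The homeomorphism $X\cong X_B$ is then built so that the part of the $M$-action that merely permutes cylinders away from the singular orbit coincides with the tail-groupoid part $\H_B$.

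Bounded type then reduces, via the nucleus, to the generators in $A\cup B$. Condition (i) — that in each tower $T_v$ only boundedly many cylinders fail to be moved by a tail map $\tau_{\gamma,\gamma'}$ — is a direct translation of the bounded activity function $p_S$: at every level only a bounded number of vertices carry a non-trivial section, so only a bounded number of cylinders per tower are active. Condition (ii) — that the germs of a given generator escaping $\H_B$ form a finite set — holds because a generator $g\in A$ acts as a tail map everywhere, while a generator $g\in B$ has $g|_{0}=g$ and sections that terminate after finitely many steps along every ray cofinal to but distinct from $0^\infty$; hence the only non-tail germs of $g$ sit over $0^\infty$ itself, a finite set after the blow-up.

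The main obstacle is the final claim, topological regularity, i.e. $\M_x$ trivial for every $x$; this is exactly the assertion that the blow-up separates germs along the singular orbit. On the dense regular part it is essentially automatic: if $\xi=\stab(\eta)$ with $\eta$ regular, then $\eta$ is topologically regular on $\partial T_d$, so any $g$ fixing $\xi$ is the identity on a cylinder $C_w\ni\eta$ and hence acts as the identity on the clopen set of $X$ corresponding to $C_w$, which is a neighbourhood of $\xi$; in particular its germ at $\xi$ is trivial. The real work is over the singular orbit, and by equivariance it suffices to treat the blow-up points over $0^\infty$. There I would compute each limit subgroup $H=\lim_k\stab(\eta_k)$ and its normaliser $N_M(H)$ using the recursion for elements of $B$, and show that any $g\in N_M(H)$ which is not already locally trivial at $x$ must act non-trivially on the directional decoration recorded by the levels of $B$, contradicting $gx=x$. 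Equivalently, the crux is to verify that the decoration built into the partitions $\mathcal P_n$ is a \emph{complete invariant} for the germ of the $M$-action at the singular points: that the $\mathcal P_n$ separate the finitely many limit subgroups over each singular ray together with their germ data finely enough that fixing $x$ forces the germ to be trivial.
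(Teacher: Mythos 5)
Your overall architecture matches the paper's: blow up the unique singular orbit into finitely many decorated points, encode the result as the path space of a stationary simple Bratteli diagram using self-similarity, get bounded type from bounded activity, and treat topological regularity at the blow-up points as the decisive issue. But there is a genuine gap exactly at that decisive issue, and it shows up in two ways. First, you never say what the ``finite decoration'' actually is, and the one concrete prescription you do give --- ``the vertex reached at level $n$ records the length-$n$ prefix of the underlying ray together with the finite decoration'' --- cannot be right: if vertices recorded prefixes, the level sets would grow like $d^n$ (so the diagram could not be stationary), and two paths agreeing from some level on would have the same prefix, so tail equivalence would collapse to equality, the tail groupoid $\H_B$ would be essentially trivial, and the bounded-type condition could not even be formulated usefully. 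In the paper the prefix is carried by the edge \emph{labels}; the vertex carries only a bounded look-ahead, namely a pair $(ab,i)$ where $a$ is the first non-zero letter strictly after the current position, $b$ is the letter following $a$, and $i\in\{0,\ast\}$ records whether the next letter is zero. Correspondingly, each singular ray $\rho$ is replaced by the formal sequences $\rho ab$ with the two letters ``at infinity'', on which $g\in M$ acts by $g(\rho ab)=g(\rho)\,g|_\rho(ab)$ via the eventual section $g|_\rho\in B$ of Lemma \ref{L: properties mother group}.

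Second, topological regularity --- which you correctly flag as ``the real work'' --- is left as a plan (compute limit subgroups $H=\lim_k\stab(\eta_k)$ and their normalisers, show the decoration is a ``complete invariant'') rather than an argument. With the right decoration no such computation is needed: if $g$ fixes $\gamma\in X_B$, contraction gives an $n$ such that the section of $g$ at the $n$-th prefix of $\gamma$ lies in the finite group $B$; elements of $B$ act only on the first non-zero letter of a sequence and on the letter following it; and these are precisely the letters that the $n$-th vertex of $\gamma$ remembers, hence they are constant on the cylinder $C_{\gamma_n}$. Since $g$ fixes $\gamma$, it therefore fixes $C_{\gamma_n}$ pointwise, which is regularity. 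This is the step where the specific choice of decoration does all the work, and it is also what produces the homeomorphism $X_B\simeq X$: regularity makes $\stab\colon X_B\to\sub(M)$ continuous (Lemma \ref{L: regularity implies continuity}), its image is $X$ because $X_B$ contains a dense invariant copy of $\partial T_d\setminus O$, and injectivity follows from total non-freeness (Corollary \ref{C: totally non free}, via Lemma \ref{L: Brieussel}); a continuous equivariant bijection of compact Hausdorff spaces is then the required conjugacy. Your proposal gestures at each of these steps, but without pinning down the decoration none of them can actually be verified, so the proof as written does not go through.
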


By Theorem \ref{T: JNS}, this implies:
\begin{cor}
The group $[[M]]$ is amenable.
\end{cor}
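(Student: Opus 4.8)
The plan is to turn the dynamical description of $M\acts\partial T_d$ into an explicit stationary Bratteli diagram whose path space realizes $X$ as a blow-up of $\partial T_d$ along its unique singular orbit. First I would pin down the structure of $X$. By Corollary~\ref{C: regular ray} we have $X=\overline{\{\stab(g\gamma)\}}$ for a regular ray $\gamma$, and Corollary~\ref{C: totally non free} gives that $\stab\colon\partial T_d\to\sub(M)$ is injective; combined with Lemma~\ref{L: regularity implies continuity} and the fact that regular rays are topologically regular, this shows that $\stab$ is a continuous injection on the dense set of regular rays. Hence the only locus where $X$ can differ from $\partial T_d$ is the single singular orbit, namely the orbit of $0^\infty$ (Lemma~\ref{L: properties mother group}). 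The germ analysis of Lemma~\ref{L: properties mother group}(3) is the key local input: along a singular ray $\rho$ every $g\in M$ has an eventual section $g|_\rho$ lying in the \emph{finite} subgroup $B<\aut(T_d)$. I expect this to force the fibre of $X$ over each singular ray to be a finite set indexed by germ data from $B$, so that $X$ is $\partial T_d$ with each singular ray replaced by a finite germ fibre.

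Next I would build the diagram. Using the self-similarity isomorphism $M\,\tilde{\to}\, M\wr_{\mathcal A}A_d$ of Lemma~\ref{L: Brieussel}, the local picture --- how the regular children and the singular spine, together with their germ data, branch at one level --- repeats identically at every level, and this is what makes the diagram \emph{stationary}. Its vertex set at each level would consist of one ``regular'' vertex together with finitely many vertices recording the germ types from $B$ encountered on approach to the spine, with edge multiplicities dictated by the recursion and by $|B|$. The path space $X_B$ of this diagram is then identified with $X$ by the map $\phi$ that reads a point $\stab(\eta)\in X$ into the coordinates of $\eta$ together with its approaching germ data; showing that $\phi$ is a well-defined continuous bijection of Cantor sets (hence a homeomorphism) is a careful but essentially routine check, and under $\phi$ the cofinality (equivalently $M$-orbit) equivalence on $\partial T_d$ matches the tail equivalence of the diagram. \emph{Simplicity} of the diagram then follows because its tail groupoid is minimal exactly when the diagram is simple, and minimality is here inherited from level-transitivity of $M$ (equivalently minimality of the URS $M\acts X$).

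With $\phi$ fixed I would verify that each $g\in M$ acts by a homeomorphism of bounded type, and this is precisely where bounded activity enters. For the first condition, the cylinders of a tower $T_v$ correspond to level sets in $\partial T_d$, and $g$ fails to act on such a cylinder as a tail map $\tau_{\gamma,\gamma'}$ only where it has a nontrivial section; by bounded activity the number of level-$n$ vertices carrying a nontrivial section is bounded uniformly in $n$, which yields the required uniform bound over $v$. For the second condition, the germ of $g$ at a point of $X_B$ lies outside $\H_B$ only along the finitely many rays on which the sections of $g$ remain nontrivial, all of which terminate in the singular orbit and contribute only finitely many points after the finite blow-up; hence the exceptional set is finite.

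Finally, topological regularity ($\M_x$ trivial for all $x$) should fall out of the same encoding. For a regular point $\stab(\gamma)$ the conjugation action fixes it iff $\stab(g\gamma)=\stab(\gamma)$, i.e.\ iff $g\gamma=\gamma$ by injectivity of $\stab$, and topological regularity of the regular ray $\gamma$ then provides a fixed neighbourhood that transfers through $\phi$; for a blown-up singular point the added germ coordinate is exactly what separates the directions of approach and forces $g$ to fix a neighbourhood. The main obstacle I anticipate is the construction and verification of the second paragraph: getting the vertex set, the edge multiplicities, and the blow-up of the singular orbit exactly right so that $\phi$ is genuinely a homeomorphism while the diagram is simultaneously stationary and simple. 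Once that is in place, both the bounded-type estimate and topological regularity reduce to bounded activity and the germ structure already recorded in Lemma~\ref{L: properties mother group}.
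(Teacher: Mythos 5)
Your proposal is correct and follows essentially the same route as the paper: Theorem \ref{T: Bratteli} is proved there exactly by blowing up the singular orbit $O$ of $0^\infty$ into finitely many points carrying germ data from $B$ (the space $\widetilde{X}=(\partial T_d\setminus O)\cup O_*$), encoding this as a stationary simple Bratteli diagram, and verifying bounded type via bounded activity and topological regularity via the eventual sections of Lemma \ref{L: properties mother group}(3), after which amenability of $[[M]]$ is exactly the application of Theorem \ref{T: JNS} that your setup is designed for.
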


Before proving the theorem, let uus first describe the idea of the construction of the Bratteli diagram in Theorem \ref{T: Bratteli} and fix some notation. The path space of the diagram $B$ will be obtained from the boundary of the tree by ``removing'' the orbit of the ray $0^\infty$ and replacing it by finitely many ones with two extra letters ``at infinity''.

 More precisely, the path space of $B$ will be in bijection with the set $\widetilde{X}$ defined as follows. let $O\subset \partial T_d$ be the orbit of the zero ray $0^\infty$. Recall from Lemma \ref{L: properties mother group} that $O$ consists exactly of rays that are co-final with $0^\infty$, and that $O$ is the only singular orbit of $M$. For $a, b\in \A=\{0, \ldots , d-1\}$ with $a\neq 0$, denote $O_{ab}$ the set of formal words of the form $\rho ab$ where $\rho \in O$. We say that the two letters $ab$ lie ``at infinity'' in positions $\omega, \omega+1$. Set $O_*=\cup O_{ab}$, where the union is taken over all $a, b\in \mathcal{A}$ with $a\neq 0$. 

\begin{defin}
With the notations above, we denote \label{D: tilde X} the set
\[\widetilde{X}= (\partial T_d \setminus O)\cup O_*.\]
We make the group $M$ act on  $\widetilde{X}$ as follows. The action on $\partial T_d \setminus O$ is given by the restriction of the action on the boundary of the tree. If $\rho ab \in O_*$ and $g\in M$ we set $g(\rho ab)=g(\rho)g|_{\rho}(ab)$, where the section $g|_\rho$ is as in point 3 of Lemma \ref{L: properties mother group}.
\end{defin}
Note that we do not consider any topology on $\widetilde{X}$ yet. We will now construct a Bratteli diagram $B$ so that the path space $X_B$ is in bijection with $\widetilde{X}$, and then we will consider on $\widetilde{X}$ the topology induced by this bijection. We will then show that $M\acts \widetilde{X}=X_B$ (with this topology) is conjugate to the Schreier dynamical system $M\acts X$.

\subsubsection*{Construction of the diagram}

 We define a Bratteli diagram $B$ as follows. The level set $V_0$ consists of a single vertex, the \emph{top vertex}. All levels $V_n, n\geq 1$ are identified with a copy of the set $\{(ab,i)\}$ where $a\in \mathcal{A}\setminus\{0\}, b\in \mathcal{A}$ and $i\in \{0, \ast\}$.
Every vertex $(ab, 0)$ at level $n$ is connected with the vertices $(ab, 0)$ and $(ab, \ast)$ at level $n+1$. We label  edges of this form by the symbol $0$. Every vertex of the form $(ab, \ast)$ with $b\neq 0$ is connected to all vertices of the form $(bc, \ast)$ with $c$ arbitrary. We label edges of this type by with the symbol $a$.
Finally every vertex of the form $(a0, \ast)$ is connected to all vertices of the form $(cd, 0)$ with $c\neq 0$ and $d$ arbitrary. These edges are also labelled $a$. The top vertex is connected to each vertex of $V_1$ by $d$ edges, labelled by $\{0, \ldots, d-1\}$. Let $B$ be the Bratteli diagram obtained in this way. Note that $B$ is simple.

The path space $X_B$ is in bijection with $\widetilde {X}$ as follows. Every path $\gamma\in X_B$ corresponds to the sequence $\tilde{\gamma}\in \widetilde{X}$ read on the labels of its edges, if this sequence does not end with infinitely many zeros. If the sequence read ends with infinitely many $0$s, then observe that the path $\gamma$ must end with an infinite sequence of edges between vertices of the form $(ab, 0)$ for some fixed $a\neq 0, b\in \mathcal{A}$, and in this case we put the two letters $ab$ ``at infinity''. Conversely, given any sequence $\tilde{\gamma}\in \widetilde{X}$ one can find a unique path $\gamma\in X_B$, so that the labels of its first $n$ edges coincide with the first $n$ letters of $\tilde{\gamma}$, and the $n$-th vertex $v_n=(ab, i)\in V_n$ of $\gamma$ encodes the following information: the symbol $i\in \{0,\ast\}$ tells wether the $n+1$th letter is zero or non-zero,  $a$ is the first non-zero letter in $\gamma$ appearing after position $n$ and $b$ is the letter following $a$.

The group $M$ acts on on $X_B$ through the identification $\widetilde{X}\simeq X_B$. We will systematically use this identification.
We put on  $\widetilde{X}$ the topology induced by this identification. Let us describe this topology in a more explicit way. 
\begin{lemma}\label{L: topology tilde X}
Let $v\in B$ be a vertex at level $n$ and $\eta$ be a finite path ending at $v$. Let $w\in \mathcal{A}^n$ be the sequence of labels read on $\eta$. Consider the cylinder subset $C_\eta\subset X_B$, and view it as a subset of $\widetilde{X}$ through the bijection $X_B\simeq \widetilde{X}$ described above.
Then 
\begin{enumerate}
\item If $v$ has the form $(ab, \ast)$ then $C_\eta$ consists exactly of all sequences starting with $wab$.
\item If $v$ has the form $(ab, 0)$ then $C_\eta$ consists exactly of all sequences in $\widetilde{X}$ starting with a prefix of the form $w0^n$ for some $n\in \N_+\cup\{\infty\}$, and such that the first non-zero letter following this prefix is $a$ and the next letter is $b$ (the letters $ab$ could be ``at infinity'').
\end{enumerate}

\end{lemma}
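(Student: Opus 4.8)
The plan is to read both statements directly off the explicit bijection $X_B \simeq \widetilde{X}$, the only real work being to express the condition defining $C_\eta$ purely in terms of the letters of a sequence. I write $\tilde\gamma = c_1 c_2\cdots$ for the letters of a sequence in $\widetilde{X}$ (allowing the last two letters to sit ``at infinity''), and I recall that its associated path is the unique $\gamma\in X_B$ whose $k$-th edge carries label $c_k$ and whose vertex $v_k\in V_k$ records, via the encoding fixed just after the construction of $B$, the symbol $i_k\in\{0,\ast\}$ telling whether $c_{k+1}$ is zero, the first non-zero letter $a_k$ occurring after position $k$, and its successor $b_k$.

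First I would observe that, under the bijection, $C_\eta$ consists exactly of those $\tilde\gamma$ whose path begins with $\eta$. I would check that the first $n$ edges of any path are determined by their labels $w$ together with the terminal vertex $v_n$: between two vertices of consecutive levels $\geq 1$ there is at most one edge, and it carries a label prescribed by its origin, while the top edge is pinned down by its label; so, running backwards from $v_n$ and $w$, one reconstructs $\eta$ uniquely. Consequently $\eta$ itself depends only on $(w,v)$, and path$(\tilde\gamma)$ begins with $\eta$ if and only if the first $n$ labels of $\tilde\gamma$ equal $w$ and its level-$n$ vertex equals $v=(ab,i)$. Everything then reduces to translating the condition $v_n=(ab,i)$ into a condition on $c_{n+1},c_{n+2},\ldots$

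Next I would split into the two cases. If $i=\ast$, then $c_{n+1}\neq 0$, so $c_{n+1}$ is itself the first non-zero letter after position $n$ and equals $a$, while $b=c_{n+2}$; hence $v_n=(ab,\ast)$ is equivalent to $\tilde\gamma$ beginning with $wab$, which is part (1). If $i=0$, then $c_{n+1}=0$, and reading forward one meets a run of zeros $0^m$ with $m\geq 1$ followed by the first non-zero letter, which must equal $a$, and then $b$; this is exactly the condition that $\tilde\gamma$ starts with a prefix $w0^m$, $m\in\N_+\cup\{\infty\}$, whose first non-zero continuation is $a$ followed by $b$, which is part (2). Both equivalences are immediate once the encoding is in hand.

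The one delicate point, and the step I would treat most carefully, is the value $m=\infty$ in part (2), corresponding to $\tilde\gamma\in O_*$. Here the run of zeros never terminates at a finite position and the letters $a,b$ are precisely the two symbols placed ``at infinity''; I would verify that the two descriptions of the bijection agree in this case, namely that a path ending in infinitely many $0$-labelled edges stabilises at a vertex of the form $(ab,0)$ (so that its level-$n$ vertex is $(ab,0)$ for every $n$ at least the length of $w$), and that the rule defining the bijection then attaches exactly these two tail letters at infinity. With this identification the encoding reads $a$ as the first non-zero letter after position $n$, now living at infinity, and $b$ as its successor, so the finite and infinite runs of zeros are covered by the single formula in part (2), completing the proof.
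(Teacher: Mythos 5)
Your proof is correct and follows essentially the same route as the paper, whose entire proof is the one-line remark that the lemma ``follows directly by inspecting the constructed bijection $\widetilde{X}\simeq X_B$''; you have simply carried out that inspection in detail (uniqueness of the path given its label sequence and terminal vertex, the translation of $v_n=(ab,i)$ into conditions on the letters, and the stabilisation of $0$-labelled tails at a fixed vertex $(ab,0)$), all of which checks out against the construction of $B$.
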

\begin{proof} Follows directly by inspecting  the constructed  bijection $\widetilde{X}\simeq X_B$.\qedhere \end{proof}

\begin{lemma}
The action of $M$ on $X_B$ is by homeomorphisms of bounded type.
\end{lemma}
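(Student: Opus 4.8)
The plan is to verify the two defining properties of bounded type for each $g\in M$, the decisive elementary observation being that whenever the section $g|_w$ at a finite word $w\in\A^*$ is trivial, $g$ acts on the cylinder $C_\gamma$ (with $\gamma$ the path reading $w$) as a pure prefix change, i.e.\ as a tail homeomorphism $\tau_{\gamma,\gamma'}$: indeed $g(w\cdot x)=g(w)\cdot x$ for every continuation $x$, so $g$ fixes all edges beyond the level of $\gamma$ and preserves its endpoint vertex. This is immediate from the bijection $\widetilde{X}\simeq X_B$ described in Lemma \ref{L: topology tilde X}. I record the converse formulation that I will use twice: if $g|_{C_\gamma}$ is \emph{not} of the form $\tau_{\gamma,\gamma'}$, then the section of $g$ at the word read by $\gamma$ is non-trivial.

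First I would establish the bound on ``bad'' cylinders. By the observation above, every cylinder $C_\gamma$ of level $n$ on which $g$ fails to be a tail homeomorphism has a non-trivial section at its label word. Since $M$ is a bounded automaton group, the activity of $g$ is bounded, i.e.\ $\#\{w\in\A^n:g|_w\neq e\}$ is bounded independently of $n$ (by contraction, Lemma \ref{L: properties mother group}, all deep sections lie in the finite nucleus $A\cup B$, whose elements have at most $d$ non-trivial sections per level). Hence the number of bad cylinders of each level $n$ is uniformly bounded, and a fortiori so is the number lying in any single tower $T_v$. This yields the first condition with no further work.

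Next I would analyse the germs. For $x\in\partial T_d\setminus O$ the ray $x$ is regular, because $O$ is the unique singular orbit of $M$ (Lemma \ref{L: properties mother group}); thus the sections of $g$ along $x$ are eventually trivial and $g$ is a tail homeomorphism on a cylinder neighbourhood of $x$, so its germ lies in $\H_B$. It remains to treat the blown-up orbit $O_*$. Fix $x=\rho ab\in O_*$ with $\rho=u0^\infty\in O$ and $a\neq0$, and let $h:=g|_\rho\in B$ be the eventual section (Lemma \ref{L: properties mother group}), written $h=(h,\sigma_1,\dots,\sigma_{d-1})\pi$ with $\pi$ fixing $0$. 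A direct computation using $\pi(0)=0$, $h|_0=h$, and $h|_a=\sigma_a\in A$ (so $h|_{ab}=e$ as $a\neq0$) gives $h(0^{s}ab\,t)=0^{s}\,\pi(a)\,\sigma_a(b)\,t$ for every tail $t$. Hence $h$ preserves every continuation through $x$ precisely when $\pi(a)=a$ and $\sigma_a(b)=b$, and in that case (in particular whenever $h=e$) $g$ is a tail homeomorphism on a small cylinder about $x$, so its germ lies in $\H_B$. Consequently the germ of $g$ fails to lie in $\H_B$ only at points $\rho ab$ for which $g|_\rho$ moves one of the two ``letters at infinity'', which forces $g|_\rho\neq e$.

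Finally I would bound $\{\rho\in O:g|_\rho\neq e\}$. If $\rho_1,\dots,\rho_N\in O$ are distinct with $g|_{\rho_i}\neq e$, write $\rho_i=u_i0^\infty$; since each section stabilises, for all sufficiently large $n$ the level-$n$ prefixes $w_i=u_i0^{\,n-|u_i|}$ are pairwise distinct and satisfy $g|_{w_i}=g|_{\rho_i}\neq e$. The activity bound then forces $N$ to be at most the activity constant of $g$, so the set is finite; together with the finitely many pairs $ab$, the germ of $g$ is non-tail at only finitely many points, giving the second condition. The main obstacle is exactly this analysis over $O_*$: one must verify that the sole obstruction to a tail germ there is the permutation of the two letters at infinity by the eventual section $h\in B$ (all deeper tail structure being automatically preserved), and then convert bounded activity into the finiteness of $\{\rho:g|_\rho\neq e\}$ through the prefix-counting argument above.
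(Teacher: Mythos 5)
Your proof is correct, and for the first condition it is exactly the paper's argument: if the section $g|_w$ at the word read by a path $\gamma$ is trivial, then $g$ restricted to $C_\gamma$ is a tail map $\tau_{\gamma,\gamma'}$ (onto the unique path reading $g(w)$ and ending at the same vertex), so bad cylinders in a tower correspond to words with non-trivial section, and bounded activity bounds their number uniformly. For the germ condition your argument is in fact more complete than the one printed in the paper: the paper verifies that condition only for the standard generators in $A\cup B$ (whose germs can leave $\mathcal{H}_B$ only at the finitely many points $0^\infty ab$) and leaves implicit the extension to arbitrary elements of $M$, which rests on the fact that the bad-germ set of a product is contained in the union of the translated bad-germ sets of the factors. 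You instead treat an arbitrary $g\in M$ directly, via the eventual section $g|_\rho\in B$ along rays $\rho\in O$, the explicit computation $h(0^{s}ab\,t)=0^{s}\pi(a)\sigma_a(b)\,t$ showing that the only obstruction to a tail germ at $\rho ab$ is the action of $g|_\rho$ on the two letters at infinity, and a prefix-counting argument converting bounded activity into finiteness of $\{\rho\in O: g|_\rho\neq e\}$. Both routes are sound; yours costs a small computation inside $B$ but avoids the (unstated) appeal to good behaviour of germs under composition, so it stands entirely on its own.
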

\begin{proof}
Pick a vertex $v$ in $B$ and let $\eta$ be a path ending at $v$. Let $w$ be the word read on $\eta$, and note that $\eta$ is the unique path ending at $v$ on which $w$ is read.  Let $g\in M$. It follows from the definition of the action $M\acts \widetilde{X}$ and from Lemma \ref{L: topology tilde X} that $g$ restricted to $C_\eta$ coincides with a transformation of the form $\tau_{\eta, \eta'}$ unless $g|_w\neq e$. Since $M$ has bounded activity, there are a bounded number of such $w$.

Moreover  for elements in the standard generating set of $M$, the germs at every point of $X_B$ belongs to $\mathcal{H}_B$ except perhaps for points that correspond to sequences of the form $0^\infty ab\in \widetilde{X}$, that may be sent to sequences of the same form for a different  choice of $ab$. The conclusion follows.
\end{proof}
\begin{lemma}\label{L: X_B topologically regular}
The action $M\acts X_B$ is topologically regular. 
\end{lemma}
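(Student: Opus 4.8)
The plan is to verify topological regularity pointwise, using the explicit description of the topology on $\widetilde X\simeq X_B$ given by Lemma \ref{L: topology tilde X}. So I fix $x\in\widetilde X$ and $g\in M$ with $g(x)=x$, and I must exhibit a clopen neighbourhood of $x$ that $g$ fixes pointwise. I would split into two cases according to whether $x$ comes from $\partial T_d\setminus O$ or from the set $O_*$ of added points ``at infinity''.

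When $x=x_1x_2\cdots\in\partial T_d\setminus O$, the ray $x$ is regular, since by Lemma \ref{L: properties mother group}(2) the orbit $O$ is the unique singular orbit. Hence there is $N$ with $g|_{x_1\cdots x_N}=e$, and since $g$ fixes $x$ it fixes the vertex $x_1\cdots x_N$ and therefore acts trivially on the whole subtree below it; by the formula for the action on $O_*$ (Definition \ref{D: tilde X}) it also fixes every point of $O_*$ extending $x_1\cdots x_N$. Choosing $n\ge N$ with $x_{n+1}\neq 0$ and taking the cylinder $C_\eta$ attached to the vertex $(x_{n+1}x_{n+2},\ast)$, Lemma \ref{L: topology tilde X}(1) identifies $C_\eta$ with the set of all sequences starting with $x_1\cdots x_{n+2}$, all of which extend $x_1\cdots x_N$; so $g$ fixes $C_\eta$ pointwise.

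The main work is the case $x=\rho ab\in O_*$ with $\rho=x_1\cdots x_k0^\infty$ and $a\neq 0$. Here $g(x)=x$ unpacks, via Definition \ref{D: tilde X}, to $g(\rho)=\rho$ together with $h(ab)=ab$, where $h:=g|_\rho\in B$ by Lemma \ref{L: properties mother group}(3). I would choose $N$ so that the section of $g$ along $\rho$ has already stabilised to $h$ at level $N$, and take the cylinder $C_\eta$ attached to the vertex $(ab,0)$ at level $N$. By Lemma \ref{L: topology tilde X}(2) this cylinder consists of $x$ itself together with the rays $y=x_1\cdots x_k0^{\,p}ab z$ with $p\ge N-k$ and $z$ arbitrary. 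The key computation is that $g(y)=y$: after peeling off the fixed prefix $x_1\cdots x_k0^{\,N-k}$, on which $g$ has section $h$, one is reduced to the action of $h$ on $0^{\,p-(N-k)}abz$; the defining wreath recursion of $B$ gives $h|_0=h$ and $h(0)=0$, so $h$ commutes with the leading block of zeros, while the sections $h|_i$ at nonzero vertices $i$ are finitary permutations of the first level with trivial sections. Since $h$ fixes the word $ab$, its first-level action fixes $a$ and $h|_a$ fixes $b$; as $h|_a$ has trivial sections it fixes the entire tail $bz$. Hence $h(abz)=abz$ and $g(y)=y$, so $g$ fixes $C_\eta$ pointwise.

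The step I expect to be the crux is this last case: the points of $O_*$ are exactly those produced by collapsing the \emph{singular} orbit $O$, so it is a priori unclear that they should become topologically regular in the new topology. The reason it works is the special self-similar structure of $B$ --- the fixed first coordinate $h|_0=h$ combined with the finitary, trivial-section behaviour at the nonzero vertices --- which forces any element of $B$ fixing the two letters $ab$ at infinity to act trivially on a whole neighbourhood of $x$. I would also check the boundary subcase $k=0$ (that is, $\rho=0^\infty$) separately, although the same computation applies verbatim.
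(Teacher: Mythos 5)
Your proof is correct and follows essentially the same route as the paper's: both rest on the stabilization of sections into the finite group $B$, together with the observation that an element of $B$ can only affect the first non-zero letter of a sequence and the letter following it --- which is exactly the data recorded by the Bratteli cylinders of Lemma \ref{L: topology tilde X}. The only difference is presentational: the paper treats all points of $X_B$ uniformly (for regular rays the section is eventually trivial, and the trivial element lies in $B$), whereas you split into the cases $\partial T_d\setminus O$ and $O_*$ and carry out the $B$-computation explicitly in the second case.
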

\begin{proof}
Let $\gamma\in X_B$ and $g\in M$ so that $g\gamma=\gamma$. View $\gamma$ as an element of $\widetilde{X}$. By Lemma \ref{L: properties mother group} 3, there exists an $n$ so the section of $g$ at $n$th prefix of $\gamma$ belongs to the finite subgroup $B$. Consider the cylinder $C_{\gamma_n}\subset X_B$ corresponding to the first $n$ edges of the path $\gamma$ (now viewed as a path in the Bratteli diagram). We claim that $g$ fixes $C_{\gamma_n}$ point-wise. The reason is that $C_{\gamma_n}$ is reminiscent of the letters $ab$, where $a$ is the first letter that follows the $n$th position in $\gamma$ which is non-zero, and $b$ is the letter following $a$ ($b$ may be 0). An element of $B$ only acts on the first non-zero letter of a ray and on the next one. Since $g$ fixes $\gamma$ and its section at the $n$th level belong to $B$, it follows that $g$ fixes $C_{\gamma_n}$ pointwise.\qedhere
\end{proof}
\begin{proof}[Proof of Theorem \ref{T: Bratteli}]
Consider the stabiliser map $\stab:X_B\to \sub(G)$. This map is continuous by Lemma \ref{L: X_B topologically regular} and Lemma \ref{L: regularity implies continuity}. Moreover its image is exactly the Schreier dynamical system $X$, since $X_B$ contains an invariant dense subset on which the action of $M$ is conjugate to the action on $\partial T_d\setminus O$. Hence we only need to check that it is injective. Let $\gamma\neq\gamma'\in X_B$. It is easy to construct, $g\in M$ so that $g\gamma=\gamma$ and $g\gamma'\neq \gamma'$ (e.g. using Lemma \ref{L: Brieussel}) and thus $\stab(\gamma)\neq \stab(\gamma')$.  \qedhere

\end{proof}
 \begin{remark}\label{R: trivial normalizer}
 It follows from the proof that every Schreier graph in $X$ has no non-trivial automorphism (as an unrooted labelled graph). Indeed the proof shows that $X$ coincides with its stability system, and thus every element of $X$ (regarded now as a subgroup of $M$) coincides with its normalizer in $M$.
 
 \end{remark}
\section{Finite generation}

\label{S: finite generation}
In this section we show
\begin{thm} \label{T: finitely generated}
$[[M]]_t'$ is finitely generated.
\end{thm}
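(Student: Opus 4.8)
The plan is to establish finite generation of $[[M]]_t'$ by exhibiting an explicit finite generating set adapted to the stationary Bratteli diagram $B$ produced in Theorem \ref{T: Bratteli}, and to exploit the self-similarity that this stationarity encodes. The key structural input is that $B$ is stationary: all levels $V_n$ for $n\geq 1$ are identified with the same finite vertex set, and the connection rule between consecutive levels is the same at every level. First I would recall that $[[M]]_t' = N$ is generated by $3$-cycles (in the sense of the earlier lemma, permuting three disjoint clopen sets cyclically), so it suffices to produce finitely many elements whose conjugates and products generate all $3$-cycles. The natural candidates are $3$-cycles supported on cylinders $C_\gamma$ lying in a common tower $T_v$ at a fixed small level, together with elements of $M$ itself (which we have shown embeds in $[[M]]_t'$) used to move supports deeper into the diagram via the self-similar structure.

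The core of the argument should be a telescoping/absorption step. Let $F = H_1 = [[\H_B^{(1)}]]$ be the finite group of full-group elements supported on level-$1$ data; recall from the preliminaries that each $H_n$ is a direct product of symmetric groups and that $[[\H_B]]'$ is simple. I would first show that the finite set consisting of the $3$-cycles inside $F \cap [[\G]]_t'$ together with a finite generating set of $M$ suffices. The mechanism is: any $3$-cycle in $[[M]]_t'$ is supported on three cylinders; by minimality and the bounded-type structure one can conjugate it by an element of $[[M]]_t'$ into one whose support consists of cylinders ending at a single vertex $v$ at some level $n$. The point of stationarity is that the pattern of a $3$-cycle at level $n$ is a ``shifted copy'' of a $3$-cycle at level $1$, and the shift can be realised by an element of $M$ (using Lemma \ref{L: Brieussel}, the wreath-recursion isomorphism $M \cong M \wr_{\mathcal A} A_d$, which lets us push a configuration from the top of the tree into a subtree). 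Thus conjugating the fixed level-$1$ generators by suitable elements of $M$ produces all the level-$n$ generating $3$-cycles, for every $n$.

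Concretely I would argue as follows. Fix a finite set $\Sigma$ of $3$-cycles supported on triples of cylinders at level $1$ that generate $F' \cap [[\G]]_t'$ (such a set exists since $F$ is finite), and let $S_M$ be a finite generating set for $M \subset [[M]]_t'$. I claim $\Sigma \cup S_M$ generates $[[M]]_t'$. Given an arbitrary $3$-cycle $g$ with support $V_1 \sqcup V_2 \sqcup V_3$, refine each $V_i$ into cylinders, so that $g$ is a product of ``elementary'' $3$-cycles each cyclically permuting three cylinders. It therefore suffices to generate every elementary $3$-cycle whose three cylinders lie at a common level and end at a common vertex $v \in V_n$. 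Using the wreath recursion, choose $w \in M$ carrying the relevant depth-$n$ configuration to a depth-$1$ configuration, so that $g^{w} \in F$ is one of the generators in (the group generated by) $\Sigma$; then $g = (g^{w})^{w^{-1}}$ lies in $\langle \Sigma \cup S_M\rangle$. A compactness argument handles the cylinders not ending at a common vertex by writing any elementary $3$-cycle as a bounded product of ones that do, again moving supports by elements of $M$.

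The main obstacle I expect is the bookkeeping that ensures the conjugating elements $w \in M$ genuinely normalise the support into level-$1$ position and that the conjugated element lands inside the finite group $F$ rather than merely inside $[[M]]_t'$ at some intermediate level. This requires care with the two extra ``letters at infinity'' in $\widetilde X$ and with the non-uniformity coming from the vertices of type $(ab,0)$ versus $(ab,\ast)$ in Lemma \ref{L: topology tilde X}; the germs at the exceptional points $0^\infty ab$ must be controlled so that the moves stay within $[[M]]_t'$ and do not introduce torsion-free germs. A secondary difficulty is verifying that elementary $3$-cycles whose supporting cylinders end at \emph{different} vertices of the same level can be reduced, by a bounded number of moves, to the common-vertex case; here minimality of $\G$ and the simplicity of $B$ (every vertex connects to every vertex a few levels down) provide the needed paths, and one invokes the doubly-infinitesimally-generated structure from Lemma \ref{L: infinitesimally generated} to keep all intermediate factors inside the subgroup. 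Once these reductions are in place, finiteness of $\Sigma \cup S_M$ yields the theorem.
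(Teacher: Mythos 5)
There is a genuine gap, and it sits exactly at the step carrying all the weight of your argument: the claim that one can ``choose $w\in M$ carrying the relevant depth-$n$ configuration to a depth-$1$ configuration, so that $g^w\in F$''. Elements of $M$ are automorphisms of the rooted tree, so they preserve depth: $w\in M$ maps a tree cylinder of depth $m$ to a tree cylinder of depth $m$, and in the identification $X\simeq X_B$ of Theorem \ref{T: Bratteli} it maps level-$n$ data to level-$n$ data. The same is true locally for every element of $[[M]]$, since such elements are by definition locally given by elements of $M$; hence \emph{no} conjugation available inside $[[M]]$ (let alone inside $M$ or inside the group you are trying to generate) can change the depth of the support of a $3$-cycle. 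Concretely, $\supp(g^w)=w(\supp(g))$ is again a union of three small sets lying deep in the diagram; for $n$ large these are properly contained in single level-$1$ cylinders, whereas every non-trivial element of $F=H_1=[[\H_B^{(1)}]]$ has support equal to a union of level-$1$ cylinders, so $g^w\in F$ forces $g=e$. The wreath-recursion isomorphism $M\simeq M\wr_{\A}A_d$ of Lemma \ref{L: Brieussel} is an isomorphism of abstract groups expressing self-similarity; the map that implements it spatially is the one-sided shift on sequences, which is $d$-to-$1$, not a homeomorphism of $X$, and in particular not a legitimate conjugator. With this, the telescoping/absorption step collapses, and the proof that $\Sigma\cup S_M$ generates $[[M]]_t'$ has no content.

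There is also a secondary problem: your level-$1$ generators are taken from $F\le[[\H_B]]$, the full group of the AF (tail) groupoid, whose elements are tail-equivalence maps $\tau_{\gamma,\gamma'}$; these need not be locally given by elements of $M$, i.e.\ need not belong to $[[M]]$ at all, and after you hedge by passing to $F\cap[[\G]]_t'$ you never verify that this intersection contains level-$1$ models of all the $3$-cycles you need. For comparison, the paper's two proofs both avoid depth-changing conjugation entirely. The first proof shows that $M\acts X$ is conjugate to a subshift (Lemma \ref{L: subshift}) and invokes Nekrashevych's theorem that the alternating subgroup of the full group of an expansive groupoid is finitely generated. The second follows Matui's scheme: the finite generating set consists of elements $\eta_{V,s,t}$ with $V$ a Gray code cylinder of one \emph{fixed} length, and elements supported on deeper cylinders are obtained by a downward induction via the commutator identity $[\eta_{U_r,t,t'},\eta^{-1}_{U_l,s',s}]=\eta_{U,s,t}$ of Lemma \ref{L: commutator trick}, whose validity rests on the combinatorial fact that a central Gray code piece is determined by its marginals (Proposition \ref{P: marginals}, giving $C_{I,\gamma}=C_{I_l,\gamma}\cap C_{I_r,\gamma}$ in Lemma \ref{L: marginals and cylinders}). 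It is this product/commutator mechanism for passing between depths---not conjugation---that your proposal is missing, and some substitute for it (expansivity, or the marginal combinatorics) would be needed to repair the argument.
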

We provide two proofs:  the first  (that was added after the appearance of \cite{Nekrashevych:groupoidsimple}) consists in using the explicit description of $M\acts X$ obtained in the previous section to show that the action $M\acts X$ is conjugate to  subshift, and then applying a theorem of Nekrashevych from \cite{Nekrashevych:groupoidsimple}. The second  consists in constructing an explicit  generating set and is based on combinatorial properties of Schreier graphs of $M$.

We denote points in the Schreier dynamical system $X$ with the notation $(\Gamma, \rho)$, where $\Gamma$ is a labelled Schreier graph and $\rho\in \Gamma$ is its basepoint. 

Recall that a group action by homeomorphisms on the Cantor set $G\acts X$ is said to be conjugate to a \emph{subshift} over a finite alphabet, if there exists a finite partition of $\mathcal P$ of $X$ into clopen sets such that the $G$-translates of elements of $\mathcal{P}$ separate points.

 \begin{lemma}\label{L: subshift}
 $M\acts X$ is  conjugate to subshift over a finite alphabet.
 \end{lemma}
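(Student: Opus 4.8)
The plan is to exhibit a finite clopen partition $\mathcal{P}$ of $X$ whose $M$-translates separate points; since $X$ is compact, this is exactly the assertion that $M\acts X$ is conjugate to a subshift. I would build $\mathcal{P}$ out of the local structure of the Schreier graphs. Fix the finite nucleus $N=A\cup B$ of $M$ (Lemma \ref{L: properties mother group}(4)) and regard each point of $X$ as an $N$-labelled pointed Schreier graph. For a radius $R$ to be chosen below, color a point by the labelled isomorphism type of the ball of radius $R$ around its basepoint; equivalently, record for each $n\in N$ whether $n$ fixes the point and, if so, the pattern of fixed and moved neighbours. Because $N$ is finite there are only finitely many such types, and each color class is clopen by the explicit description of the topology in Lemma \ref{L: topology tilde X} (membership of a nucleus element in $\stab(\tilde{\gamma})$ is locally constant on $\widetilde{X}\simeq X$). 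This yields a finite clopen partition $\mathcal{P}$.

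It then remains to show that the $M$-translates of $\mathcal{P}$ separate points, i.e. that the equivariant map $\Psi\colon X\to \mathcal{P}^{M}$, $x\mapsto (g\mapsto \text{color of } gx)$, is injective. Under the identification $X\simeq \widetilde{X}$ of Theorem \ref{T: Bratteli}, two distinct points $\tilde{\gamma}\neq\tilde{\gamma}'$ first differ at some finite position, and I would use the wreath recursion isomorphism of Lemma \ref{L: Brieussel} to produce an element $g\in M$ whose section at a suitable prefix is a nucleus element that fixes the basepoint of $g\tilde{\gamma}$ but moves that of $g\tilde{\gamma}'$ (or otherwise alters the local picture). Corollary \ref{C: totally non free} guarantees the existence of such an element on $\partial T_d$, and the contraction property (Lemma \ref{L: properties mother group}(4)) ensures that the relevant germ is already visible inside the radius-$R$ ball once $R$ exceeds the depth at which all sections of the finitely many generators enter $N$. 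Consequently $g\tilde{\gamma}$ and $g\tilde{\gamma}'$ receive different colors, so $\Psi(\tilde{\gamma})\neq\Psi(\tilde{\gamma}')$.

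The points requiring extra care are those of the singular orbit and of the ``infinity'' set $O_*$: here the two extra letters $ab$ sit at positions $\omega,\omega+1$, and one must check that the local-type coloring still distinguishes such points from each other and from ordinary rays, using that the section $g|_\rho$ of Definition \ref{D: tilde X} lands in $B\subset N$ and acts only on these two letters. Remark \ref{R: trivial normalizer}, which asserts that every graph in $X$ coincides with its stability system and carries no nontrivial automorphism, is what rules out two distinct points having identical colored orbit data.

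The main obstacle is precisely that the action on $\widetilde{X}$ preserves the level structure of the tree, so no single generator can transport a difference occurring at a deep position up to the root; a naive ``first $k$ letters'' partition therefore fails. The genuine content is to verify \emph{expansiveness} of the local-type partition, namely that scanning the bounded-radius local data over the whole orbit via basepoint-moving reconstructs the point. This is where contraction does the essential work, by reducing the question of whether an arbitrary element of $M$ fixes a point to nucleus data read along the orbit, and it is the step I expect to require the most careful bookkeeping.
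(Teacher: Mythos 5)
Your setup (a finite clopen partition by bounded local type, plus the reduction to showing that $M$-translates of the partition separate points) is the same as the paper's, which in fact uses an even coarser partition: two points of $X$ get the same colour iff the loops at their basepoints carry the same labels, i.e.\ iff the same generators in $S=A\cup B$ fix them. The gap is in the separation step, where you leave the actual mechanism unproved and the tools you propose in its place do not supply it. The paper's key move is the following prefix-zeroing trick: if $\tilde\gamma\neq\tilde\gamma'$ first differ at a finite position $r$, with common prefix $w$ and differing letters $x\neq y$, one uses level-transitivity together with Lemma \ref{L: Brieussel} to produce $g\in M$ with $g(w)=0\cdots01$ and $g|_w=e$; then $g\tilde\gamma=0\cdots01x\cdots$ and $g\tilde\gamma'=0\cdots01y\cdots$, and since a generator in $B$ reads (and moves) exactly the first non-zero letter and the letter after it (Remark \ref{R: visible}), the sets of $B$-generators fixing these two translates differ, so the translates get different colours; for points of $O_*$ differing only at the positions $\omega,\omega+1$ one instead maps the common non-zero prefix to all zeros. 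This does not contradict the level-preservation obstacle you correctly identify: the difference is never transported towards the root, it is made \emph{$B$-visible} by zeroing out everything above it.

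Neither Corollary \ref{C: totally non free} nor contraction can replace this construction. Total non-freeness gives an $f\in M$ with $f\tilde\gamma=\tilde\gamma$ and $f\tilde\gamma'\neq\tilde\gamma'$, but such an $f$ acts ``diagonally'': it distinguishes the two points through its section at a prefix whose depth grows with $r$, and nothing about this is detectable in a radius-$R$ ball around the basepoint of \emph{any} translate until one has performed the prefix-zeroing — generators only ever affect visible positions, so for fixed $R$ there are distinct points all of whose translates have identical $R$-balls unless the separating translate is chosen as above. Contraction cannot produce a uniform $R$ either: the depth at which the sections of $f$ enter the nucleus depends on $f$, hence on $r$, and your clause ``once $R$ exceeds the depth at which all sections of the finitely many generators enter $N$'' is vacuous, since the generating set $A\cup B$ \emph{is} the nucleus (Lemma \ref{L: properties mother group}), so its sections lie in $N$ at depth $0$. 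Tellingly, the paper's proof of this lemma never invokes contraction; what it uses is Lemma \ref{L: Brieussel} plus the explicit form of the $B$-generators. Your honest admission that the ``expansiveness'' verification is the step requiring the most careful bookkeeping is accurate: that verification is precisely the missing prefix-zeroing argument, and it is the whole content of the lemma.
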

 \begin{proof}
We define a clopen partition $\mathcal{P}$ of $X$ so that $M$-translates of $\mathcal{P}$ separate points. By definition $(\Gamma, \gamma)$ and $(\Gamma', \rho)$ are in the same element of $\mathcal{P}$ if the loops at $\gamma, \rho$ have the same labels. Let us show that the $M$-translates of $\mathcal{P}$ separate points. Use the identification $X\simeq \widetilde{X}$ introduced in the previous section (see Definition \ref{D: tilde X}). Let $\gamma\neq \rho \in \widetilde{X}$. Since $\gamma\neq \rho$, there is a first bit in which $\gamma$ and $\rho$ differ, say a position $r\in \N\cup \{\omega, \omega+1\}$.  Assume at first that $r\in \N$. Let this bit be $x$  in $\gamma$ and $y$ in $\rho$. If $r=1$ then there are generators in $A$ that fix $\gamma$  but not $\rho$ and we are done. Otherwise let $w$ be the common prefix of length $r+1$. Since the group $M$ acts level-transitively on $T_d$ and using Lemma \ref{L: Brieussel}, we can find $g\in M$ so that $g(w)=0^{r-2}1$ and $g|_w=e$. Then $g(\gamma)= 0^{r-2}1x\cdots$ and $g(\rho)=0^{r-2}1y\cdots$. It is easy to see that the generators in $B$ that fix $g(\gamma)$ and $g(\rho)$ are different and thus $g(\gamma), g(\rho)$ lye in different elements of $\mathcal{P}$. The case $r\in \{\omega, \omega+1\}$ is similar, but choose instead $g$ so that $g(w)=0\cdots 0$ (where $w$ is the non-zero prefix common to $\gamma$ and $\rho$).\qedhere

 \end{proof} 
 
 \begin{remark}
The mother group plays an important role in the previous proof: by the proof of Lemma \ref{L: basilica}, the Schreier dynamical system of a bounded automaton group is not always conjugate to a subshift.
 \end{remark}
 
 \begin{proof}[First proof of Theorem \ref{T: finitely generated}]
By Nekrashevych's  result \cite{Nekrashevych:groupoidsimple}, Lemma \ref{L: subshift}Ê  is enough to conclude the proof. Namely it is proved in \cite{Nekrashevych:groupoidsimple} that if $\G$ is an expansive groupoid with infinite orbits, the alternating subgroup $A(\G)<[[\G]]$ is finitely generated (see \cite{Nekrashevych:groupoidsimple} for the definition of $A(\G)$), and that the groupoid of germs of a subshift is expansive. In this situation the group $A(\G)$ coincides with $[[M]]_t'$, indeed it is a  normal subgroup of $[[M]]_t'$ and the latter is simple by Theorem \ref{T: simplicity}. This is enough to conclude.
 \end{proof}

We now turn to the second proof. 

\subsection{Basic properties of Schreier graphs of $M$}

In this subsection we collect some preliminary considerations on Schreier graphs of $M$.

We let $\widetilde{X}= (\partial T_d \setminus O)\cup O_*$ be the space of sequences introduced in Definition \ref{D: tilde X}. We will often use the identification $X\simeq \widetilde{X}$.

Fix $\rho\in \widetilde{X}$ and let $O(\rho)$ be its orbit. Denote $\Gamma$ the corresponding Schreier graph, with respect to the generating set $S=A\cup B$.
\subsubsection*{Projection to the Gray code line}

For $\gamma\in \widetilde{X}$, let $\overline{\gamma}$ be the sequence in the binary alphabet $\{0, \ast\}$ obtained by replacing all non-zero letters of $\gamma$ by $\ast$.

This defines a graph projection $p:\Gamma \to \oGamma$ where $\oGamma$ is the following graph. Its vertex set consists of sequences $\overline{\gamma}$ where $\gamma\in O(\rho)$. Two such sequences are neighbour if and only if they differ either in the first bit only, or in the bit that follows the first appearance of $\ast$ only. Edges of the first type are called edges `` of type $A$'', and edges of the second type are called edges `` of type $B$''. Since every vertex has exactly two neighbours, $\oGamma$ is a line.
The projection $p:\Gamma\to \oGamma$ preserves adjacency. More precisely, edges given by actions of generators in $A$ project to edges of type $A$ (unless its endpoints are mapped to the same vertex), and the same with $B$.

It is convenient to think of the graph $\Gamma$ as `` fibered over'' the line $\oGamma$.

\begin{defin}
We call $\oGamma$ the \emph{Gray code line} associated to $\Gamma$. 
\end{defin}

The reason for the name is the connection to the Gray code ordering of binary sequences. This connection, and the fact that Schreier graphs of $M$ projects to lines was used in \cite{Amir-Virag:positivespeed, Amir-Virag:speedexponent, Liouvilletrees}. 
\begin{defin}
Let $\gamma\in \widetilde{X}$. We say that the bit at position $r\in \N\cup\{\omega, \omega+1\}$  is \emph{visible} in $\gamma$ if it is either the first bit (in which case we say that it is $A$-visible), or if it is the first non-zero bit, or the following one (in which cases we say that it is $B$-visible).

The same terminology applies to bits in the projected sequence $\overline{\gamma}$.

If $\overline{I}\subset \overline{\Gamma}$ is a segment, we say that a bit is visible in $\overline{I}$ if it is visible in at least one sequence in $\overline{I}$.
\end{defin}
\begin{remark}\label{R: visible}
Acting on $\gamma$ with a generator in $A, B$ only modifies a bit that is $A, B$-visible (this is a straightforward consequence of the definition of the groups $A, B$).

\end{remark}

The following definition will allow us to define a basis for the topology of the Schreier dynamical system $X$, particularly convenient to work with.
\begin{defin}[Gray code piece] \begin{enumerate}
\item A \emph{Gray code piece} $I$  a subgraph of $\Gamma$ which is a connected component of $p^{-1}(\overline I)$, where $\overline{I}=[\overline{\gamma}_0,\cdots \overline{\gamma}_{n-1}]\subset\overline \Gamma$ is a finite segment. We still denote $p: I\to \overline{I}$ the restriction of the projection map to $I$. The \emph{length} of a Gray code piece is the length of $\overline{I}$ (i.e. the number of its vertices).

\item A \emph{pointed Gray code piece} $(I, \gamma)$ is a Gray code piece $I$ together we a preferred vertex $\gamma\in I$. We will always denote $\overline v\in\overline{I}$ the projection of $v$ to $\overline I$.

\item A pointed Gray code piece $(I, \gamma)$ is said to be \emph{central} if $\gamma$ projects to the midpoint of $\overline I$ (in particular, a central Gray code piece has odd length).
\end{enumerate}
\end{defin}

\begin{lemma} Gray code pieces are finite graphs.
\end{lemma}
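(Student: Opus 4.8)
The plan is to exploit the interplay between the projection $p\colon \Gamma\to\oGamma$ and the notion of visibility. Fix a Gray code piece $I$, that is, a connected component of $p^{-1}(\overline{I})$ for a finite segment $\overline{I}=[\overline{\gamma}_0,\ldots,\overline{\gamma}_{n-1}]\subset\oGamma$. The key observation is that, because $\overline{I}$ is finite, only finitely many positions can ever be visible in it; since by Remark \ref{R: visible} a generator in $A\cup B$ modifies only a visible bit, and since $I$ is connected and projects into $\overline{I}$, every vertex of $I$ must agree with a fixed basepoint outside this finite set of positions. This forces $I$ to be finite.

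First I would introduce the set $P\subset \N\cup\{\omega,\omega+1\}$ of all positions that are visible in some sequence belonging to $\overline{I}$. Each vertex $\overline{v}\in\overline{I}$ contributes at most three visible positions, namely the first position (which is $A$-visible), the position of the first $\ast$, and the one immediately following it (the $B$-visible positions); note that the first $\ast$ always exists, since every sequence of $\widetilde{X}$ has a non-zero letter. As $\overline{I}$ has exactly $n$ vertices, it follows that $|P|\le 3n$, so $P$ is finite.

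Next I would record that whether a position is visible at a vertex $v\in\Gamma$ depends only on its zero--nonzero pattern, i.e.\ only on the projection $\overline{v}$. Consequently the positions visible at $v$ coincide with those visible at $\overline{v}$; when $v\in I$ we have $\overline{v}\in\overline{I}$, so all positions visible at $v$ lie in $P$. Now fix a basepoint $v_0\in I$. Any $v\in I$ is joined to $v_0$ by a path inside $I$ whose successive edges are applications of generators in $A\cup B$. Each such edge modifies only a bit visible at its source vertex, and that source vertex projects into $\overline{I}$ (because the path stays within the component $I\subseteq p^{-1}(\overline{I})$); hence every modified position belongs to $P$. Therefore $v$ and $v_0$ agree at every position outside $P$. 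Since each of the finitely many positions in $P$ carries one of at most $d$ symbols, the number of vertices of $I$ is bounded by $d^{|P|}$, and local finiteness of $\Gamma$ (the generating set $A\cup B$ being finite) then shows that $I$ is a finite graph.

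The point requiring the most care is the third step: one must verify that staying inside the connected component $I$ genuinely confines the path to vertices whose projections remain in $\overline{I}$, so that the visible positions encountered never escape the finite set $P$. This is exactly where the definition of a Gray code piece as a connected component of $p^{-1}(\overline{I})$ is used, together with the fact that the projection of any edge of $\Gamma$ is an edge of $\oGamma$ or a single vertex. A minor additional bookkeeping concerns the two ``infinity'' positions $\omega,\omega+1$, which enter the count on the same footing as the ordinary positions and cause no difficulty.
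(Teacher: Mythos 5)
Your proof is correct and follows essentially the same route as the paper's: both argue that, since a Gray code piece is a connected component of $p^{-1}(\overline{I})$, any vertex is reached from a basepoint by generator moves that (by Remark \ref{R: visible}) alter only bits visible in the finite segment $\overline{I}$, so vertices can differ from the basepoint in only finitely many positions. Your write-up merely makes the paper's counting explicit (the bound $|P|\le 3n$ and $d^{|P|}$ vertices), which is a harmless elaboration rather than a different argument.
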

\begin{proof}

Let $I\subset \Gamma$ be a Gray code piece, and pick a vertex $\gamma\in I$. Since $I$ is connected, every vertex $\gamma'$ of $I$ can be reached from $\gamma$ acting with generators in $A \cup B$ and without never getting out of $\overline{I}$ in the projection. Thus, the corresponding sequence $\gamma'$ only differs from $\gamma$ in bits that are visible in  $\overline{I}$. Thus there are only finitely many possibilities for $\gamma'$. \qedhere
\end{proof}

 \begin{defin}[Marginals]
Let $(I, \gamma)$ be a central Gray code piece of length $2n+1$ with $n\geq 2$ and $\overline{I}=[\overline{\gamma}_{-n},\cdots , \overline{\gamma}_0, \cdots \overline{\gamma}_n]$. Denote $\overline I_l=[\overline{\gamma}_{-n}, \cdots \overline{\gamma}_{n-2}]\subset\overline I$ 
the segment consisting of the $2n-1$ leftmost vertices of $\overline I$ and by $\overline I_r$ the segment consisting of the rightmost $2n-1$ vertices. Let $I_l, I_r\subset I$ be respectively the connected components of $\gamma$ in $p^{-1}(\overline I_l), p^{-1}(\overline I_r)$. Then $({I}_l, \gamma), ({I}_r, \gamma)$ are two (non-central) pointed Gray code pieces, that we call the \emph{marginals} of $(I, \gamma)$.

We also call $\overline{I_c}\subset \overline{I}$ the segment consisting of the $2n-3$ central vertices, and let $I_c\subset I$ be the connected component of $\gamma$ in $p^{-1}(I_c)$, so that $(I_c, \gamma)$ is a central Gray code piece of length $2n-3$.
\end{defin}

The main combinatorial feature of the graphs that we will use is contained in the next proposition.

\begin{prop}\label{P: marginals}
Let $(I, \gamma)\subset \Gamma$ be a centered Gray code piece. Then the isomorphism class of $(I, \gamma)$ as a labelled graph is uniquely determined by the isomorphism classes of its marginals.
\end{prop}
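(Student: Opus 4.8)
The plan is to realise $(I,\gamma)$ as the amalgam of its two marginals, glued along the central sub-piece $(I_c,\gamma)$, and to show that this amalgam depends only on the isomorphism types of $(I_l,\gamma)$ and $(I_r,\gamma)$. The geometric input is locality: by Remark~\ref{R: visible} a generator only alters a bit that is visible, so the part of a Gray code piece lying over the left end of $\overline I$ and the part lying over the right end involve disjoint coordinates once the data over the centre has been fixed. Concretely, with $\overline I=[\overline\gamma_{-n},\dots,\overline\gamma_n]$, $\overline I_l=[\overline\gamma_{-n},\dots,\overline\gamma_{n-2}]$, $\overline I_r=[\overline\gamma_{-n+2},\dots,\overline\gamma_n]$ and $\overline I_c=[\overline\gamma_{-n+2},\dots,\overline\gamma_{n-2}]$, we have $\overline I_l\cup\overline I_r=\overline I$ and $\overline I_l\cap\overline I_r=\overline I_c$. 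The first step is a connectivity lemma: for any sub-segment $\overline J\subseteq\overline I$ containing the basepoint position $\overline\gamma_0$, the preimage $p^{-1}(\overline J)\cap I$ is connected. I expect to prove this by a monotone lifting argument along the line $\oGamma$, joining any vertex over $\overline J$ to $\gamma$ by applying generators that move its projection towards $\overline\gamma_0$ without leaving $\overline J$. Granting it, the connected components in the definition of the marginals are full preimages, so $I_l=p^{-1}(\overline I_l)\cap I$, $I_r=p^{-1}(\overline I_r)\cap I$, $I_c=p^{-1}(\overline I_c)\cap I$; hence $I=I_l\cup I_r$ and $I_l\cap I_r=I_c$, exhibiting $I$ as the pushout of $I_l\leftarrow I_c\rightarrow I_r$ as a labelled graph.

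The second step is to make this pushout intrinsic. I would argue that the projection $p$ and the induced linear coordinate (the signed distance to the basepoint along the line) are recoverable from the labelled graph $(I_l,\gamma)$ alone: the two edge-labels $A,B$ organise $I_l$ over a line, the basepoint pins the origin, and the collapsed edges are detected intrinsically. Consequently any label- and basepoint-preserving isomorphism automatically respects $p$ and the coordinate, and therefore carries the central sub-piece to the central sub-piece. In particular, if $(I_l,\gamma)\cong(I_l',\gamma')$ and $(I_r,\gamma)\cong(I_r',\gamma')$ are the marginals of two central pieces, then any chosen isomorphisms $\phi_l,\phi_r$ restrict to isomorphisms $(I_c,\gamma)\to(I_c',\gamma')$, which may however differ by an automorphism $\psi\in\aut(I_c',\gamma')$.

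The crux, and the step I expect to be the main obstacle, is an extension lemma: every automorphism of the pointed central sub-piece $(I_c,\gamma)$ extends to an automorphism of each marginal, i.e.\ the restriction maps $\aut(I_l,\gamma)\to\aut(I_c,\gamma)$ and $\aut(I_r,\gamma)\to\aut(I_c,\gamma)$ are surjective. This is a rigidity statement: the two extra layers of $I_l$ over the positions $\overline\gamma_{-n},\overline\gamma_{-n+1}$ attach only to the boundary fibre of $I_c$ over $\overline\gamma_{-n+2}$, and their whole structure is governed by the bits visible near that boundary (again by Remark~\ref{R: visible}); the width-two overlap is exactly what guarantees that any bit whose visibility is activated by the extension is already pinned down inside $I_c$. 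Thus an automorphism of $I_c$, which permutes this boundary data, transports the left extension to itself and so extends, and symmetrically on the right. Proving this cleanly requires the explicit description of the fibres of $p$ and of the horizontal edges between adjacent fibres, which is the genuinely combinatorial content and where I anticipate the real work.

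Once the extension lemma is available the argument closes formally. Post-compose $\phi_l$ with an automorphism of $I_l'$ extending $\psi=\phi_r|_{I_c}\circ(\phi_l|_{I_c})^{-1}$; the modified $\phi_l$ then agrees with $\phi_r$ on $I_c$, and by the pushout description the two maps glue to a single label- and basepoint-preserving isomorphism $(I,\gamma)\to(I',\gamma')$. This proves that the isomorphism classes of the marginals determine the isomorphism class of $(I,\gamma)$.
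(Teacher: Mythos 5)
Your argument collapses at its first step: the ``connectivity lemma'' is false. For a sub-segment $\overline{J}\subseteq\overline{I}$ containing $\overline{\gamma}_0$, the preimage $p^{-1}(\overline{J})\cap I$ need \emph{not} be connected. Two vertices of $I$ lying over the same point of $\overline{I}_c$ can differ in a bit that is visible nowhere in $\overline{I}_c$ but is visible somewhere in $\overline{I}\setminus\overline{I}_c$ (cf.\ Remark~\ref{R: visible}); any path joining them must make an excursion outside $\overline{J}$ to a place where that bit becomes visible, so inside $p^{-1}(\overline{I}_c)$ they lie in different connected components. This is precisely the \emph{branching} phenomenon around which the paper's proof is organised, and it genuinely occurs: the Gray code line contains segments whose first-$\ast$ position dips deep at one or both ends (the extreme case being the \emph{quasi-levels} of the paper, which are essentially level Schreier graphs of $M\acts T_d$ and branch on both sides). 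Once connectivity fails, your pushout description fails with it: $I_l$, $I_r$, $I_c$ are in general proper subsets of the corresponding preimages, the overlap $I_l\cap I_r$ can be strictly larger than $I_c$ (it may contain branches), and $I$ is not presented as the amalgam $I_l\cup_{I_c}I_r$. The genuine difficulty of the proposition --- invisible in your setup --- is that a left branch of $I_l$ and a right branch of $I_r$ may or may not be the \emph{same} subgraph of $I$, and this matching is not recorded by either marginal separately.

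A second point: the ``extension lemma'' you single out as the crux is vacuous rather than hard, which signals that the difficulty has been misplaced. In these labelled graphs every vertex carries at most one edge with each label $s\in S$, so a label- and basepoint-preserving isomorphism of a connected pointed piece is unique if it exists; in particular $\aut(I_c,\gamma)$ is trivial and $\phi_l,\phi_r$ automatically agree on the core. No correction by automorphisms is ever needed, and rigidity of the pointed core does no work on the branches, which are exactly the components of $p^{-1}(\overline{I}_c)\cap I$ \emph{not} containing the basepoint. The paper's proof confronts this head-on by a dichotomy: if $(I,\gamma)$ does not branch on both sides, the overlap of the marginals is exactly the core and the gluing is forced; if it bi-branches, an analysis of roots and anti-roots on the Gray code line shows that $(I,\gamma)$ must be a quasi-level, and quasi-levels are rigid enough to be reconstructed from their marginals (one recognises the preimages of the root and anti-root inside $I_l$ and $I_r$ and reads off the deep letters from them). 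Your proposal would need to be rebuilt around this case analysis; as written, its first lemma is false and the rest does not engage with the actual obstruction.
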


The proof of this proposition requires a more detailed analysis of the Schreier graphs, which will not be relevant for the rest of the proof. For this reason, it is postponed to Subsection~\ref{S: proof of prop}.
\subsection{Second proof of Theorem~\ref{T: finitely generated}}

 The strategy is to follow the same reduction steps as in Matui's proof  \cite[Theorem 5.4]{Matui:simple} of finite generation for the topological full group of a minimal $\Z$-subshift. However, the analysis is substantially more involved, as we need to deal with the more complicated nature of the graphs.

 We define a convenient basis for the topology on $X$.
\begin{defin}[Gray code cylinder]
\begin{enumerate}
\item Let $(I, \gamma)$ be a pointed Gray code piece. 
The corresponding \emph{Gray code cylinder} $C_{I, \gamma}\subset X$ the collection of all $(\Gamma, \rho)\in X$ so that a Gray code piece around $\rho$ is isomorphic to $(I, \gamma)$. We say that a Gray code cylinder is \emph{central} if the corresponding Gray code piece is central.
\item For $(\Gamma, \rho)\in X$ we denote $(\Gamma|_n, \rho)$ the pointed central Gray code piece of length $2n+1$ around $\rho$. 

\end{enumerate}
\end{defin} 
Central Gray code cylinders are a basis of clopen sets for the topology on $X$.

The following is a consequence of Proposition~\ref{P: marginals}.
\begin{lemma}\label{L: marginals and cylinders}
Let $(I, \gamma)$ be a central Gray code piece, and $(I_l, \gamma), (I_r, \gamma)$ be its marginals. Then
\[C_{I, \gamma}=C_{I_l, \gamma}\cap C_{I_r, \gamma}.\]
\end{lemma}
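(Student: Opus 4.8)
The plan is to prove the set equality $C_{I,\gamma}=C_{I_l,\gamma}\cap C_{I_r,\gamma}$ by establishing both inclusions, with the nontrivial direction resting entirely on Proposition~\ref{P: marginals}. First I would record what membership in each cylinder means unwound through the definitions: a point $(\Gamma,\rho)\in X$ lies in $C_{I,\gamma}$ precisely when the central Gray code piece of the appropriate length around $\rho$ is isomorphic (as a pointed labelled graph) to $(I,\gamma)$, and likewise $(\Gamma,\rho)\in C_{I_l,\gamma}$ (respectively $C_{I_r,\gamma}$) means the corresponding pointed Gray code piece around $\rho$ is isomorphic to the left marginal $(I_l,\gamma)$ (respectively the right marginal $(I_r,\gamma)$).

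The inclusion $C_{I,\gamma}\subset C_{I_l,\gamma}\cap C_{I_r,\gamma}$ is the easy direction and I would dispatch it first. If a Gray code piece around $\rho$ is isomorphic to $(I,\gamma)$, then since the marginals $(I_l,\gamma)$ and $(I_r,\gamma)$ are obtained from $(I,\gamma)$ by a canonical construction (passing to the connected components of $\gamma$ in the preimages $p^{-1}(\overline I_l)$ and $p^{-1}(\overline I_r)$), any isomorphism of pointed labelled graphs $(I,\gamma)\to$ (piece around $\rho$) restricts to give the corresponding isomorphisms of the marginals. Thus the marginals around $\rho$ are isomorphic to $(I_l,\gamma),(I_r,\gamma)$, placing $(\Gamma,\rho)$ in both cylinders. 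The point here is just that the marginal operation is functorial with respect to pointed labelled isomorphisms, which follows because the projection $p$ and the segment structure on $\overline I$ are preserved.

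For the reverse inclusion $C_{I_l,\gamma}\cap C_{I_r,\gamma}\subset C_{I,\gamma}$, suppose $(\Gamma',\rho)$ lies in both cylinders, so that the left marginal around $\rho$ is isomorphic to $(I_l,\gamma)$ and the right marginal is isomorphic to $(I_r,\gamma)$. Let $(J,\rho)$ denote the central Gray code piece of length $2n+1$ around $\rho$, whose marginals are therefore isomorphic to those of $(I,\gamma)$. By Proposition~\ref{P: marginals}, the isomorphism class of a central Gray code piece is uniquely determined by the isomorphism classes of its two marginals; hence $(J,\rho)\cong(I,\gamma)$, so $(\Gamma',\rho)\in C_{I,\gamma}$. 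I expect the only genuine subtlety to be bookkeeping about lengths and basepoints—namely checking that the central piece around $\rho$ whose marginals we are comparing has exactly the length $2n+1$ matching $(I,\gamma)$, so that Proposition~\ref{P: marginals} applies verbatim; once the definitions are aligned this is immediate, and the entire content of the lemma is the invocation of Proposition~\ref{P: marginals}, which is the main (and already cited) obstacle deferred to Subsection~\ref{S: proof of prop}.
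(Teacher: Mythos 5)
Your proof is correct and follows exactly the paper's (much terser) argument: the forward inclusion is immediate from restricting isomorphisms to the marginals, and the reverse inclusion is precisely an invocation of Proposition~\ref{P: marginals}. The extra bookkeeping you flag about lengths and basepoints is a reasonable elaboration, but the substance is identical.
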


\begin{proof} The inclusion $\subset$ is obvious. The reversed inclusion is a consequence of Proposition \ref{P: marginals}.\qedhere \end{proof}

We define a partial action of $M$ on pointed Gray code pieces. The action of $g\in M$ on $(I, \gamma)$ is defined if and only if it is possible to write $g$ as  $g=s_n\cdots s_1$ with $s_i\in S$, in such a way that the path in $I$ starting from $\gamma$ with labels $s_1,\cdots, s_n$ is entirely contained in $I$. In that case we set $g(I, \gamma)=(I, g\gamma)$ where $g\gamma$ is the endpoint of the above path.  

\begin{defin}\begin{enumerate}
\item Given a clopen subset $U\subset X$, and elements $g,h\in M$ we call the triplet $(U,g,h)$ \emph{admissible} if the sets $U,g^{-1}U,hU$ are disjoint.
\item Given an admissible triplet $(U,g,h)$ we define $\eta_{U,g,h}$ to be the element of $[[M]]$ that acts as $g,h,g^{-1}h^{-1}$ on $g^{-1}U,U,hU$ respectively, and the identity elsewhere.
\end{enumerate}
\end{defin}

\begin{lemma}\label{L: admissible commutator} Let $(U, g, h)$ be an admissible triplet. Then $\eta_{U,g,h}\in
[[M]]_t'$.
\end{lemma}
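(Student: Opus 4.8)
The plan is to exhibit $\eta_{U,g,h}$ as a commutator of two involutions lying in $[[M]]_t$, so that it lands in $[[M]]_t'$ by definition. Write $V_1=g^{-1}U$, $V_2=U$, $V_3=hU$; by admissibility these three clopen sets are pairwise disjoint. Unwinding the definition of $\eta_{U,g,h}$, one sees that it cyclically permutes them, acting as $g\colon V_1\to V_2$, as $h\colon V_2\to V_3$, as $g^{-1}h^{-1}\colon V_3\to V_1$, and as the identity on $X\setminus(V_1\cup V_2\cup V_3)$. In other words $\eta_{U,g,h}$ is a $3$-cycle in the sense of the lemma above: tracing a point $x\in V_1$ gives $x\mapsto gx\mapsto hgx\mapsto g^{-1}h^{-1}hgx=x$, and similarly on $V_2,V_3$, so $\eta_{U,g,h}$ has order dividing $3$ and is a torsion element of $[[M]]$. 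By the lemma identifying $[[\G]]_t'$ with the group generated by $3$-cycles, membership in $[[M]]_t'$ would already follow; to keep the argument self-contained I would instead produce the commutator explicitly.

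The explicit step is to define $k_1\in\homeo(X)$ acting as $g$ on $V_1$, as $g^{-1}$ on $V_2$, and as the identity elsewhere, and $k_2$ acting as $h$ on $V_2$, as $h^{-1}$ on $V_3$, and as the identity elsewhere. Each is locally given by an element of $M$, hence lies in $[[M]]=[[\M]]$; each exchanges two disjoint clopen sets and fixes the rest, so each is an involution and thus a torsion element. Therefore $k_1,k_2\in[[M]]_t$, whence $[k_2,k_1]\in[[M]]_t'$. It then remains to verify, by a direct computation on $V_1,V_2,V_3$ and their complement, that $[k_2,k_1]=\eta_{U,g,h}$. A short orientation check decides which of $[k_1,k_2]$ and $[k_2,k_1]$ equals $\eta_{U,g,h}$ rather than $\eta_{U,g,h}^{-1}$, but since both lie in $[[M]]_t'$ this is immaterial for the conclusion.

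The four-fold composition $k_1k_2k_1k_2$ is the only real content, and it is routine bookkeeping of which of the three sets each point visits at each stage. The single point requiring care is that admissibility furnishes the pairwise disjointness of $V_1,V_2,V_3$, which is exactly what guarantees that $k_1$ acts trivially on $V_3$, that $k_2$ acts trivially on $V_1$, and so on; this is what makes the intermediate images land in the correct set at each step, so that the extraneous pieces of $k_1$ and $k_2$ never interfere and the composition collapses to the claimed $3$-cycle. I expect no genuine obstacle beyond carrying out this orbit-tracing cleanly.
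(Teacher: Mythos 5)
Your proof is correct and is essentially the paper's own argument made explicit: the paper observes that $\eta_{U,g,h}$ lies in a copy of $S_3$ inside $[[M]]$ (hence inside $[[M]]_t$) permuting the three disjoint sets $g^{-1}U, U, hU$, and that a $3$-cycle is a commutator there; your $k_1$ and $k_2$ are precisely the two transpositions generating that $S_3$, and the commutator identity $[k_2,k_1]=\eta_{U,g,h}$ you set up does check out on each of $V_1,V_2,V_3$ and their complement. No gap beyond the routine orbit-tracing you already describe.
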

\begin{proof}  Clearly $\eta_{U, g, h}$ belongs to a subgroup of $[[M]]$ isomorphic to the symmetric group $S_3$ (which is thus contained in $[[M]]_t$) and corresponds to a 3-cycle (which is thus a commutator). \end{proof}

\begin{lemma}
The elements $\eta_{U,g,h}$ generate $[[M]]_t'$ as $(U, g, h)$ varies among admissible triplets.
\end{lemma}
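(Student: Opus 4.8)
The plan is to show that the elements $\eta_{U,g,h}$ generate $[[M]]_t'$ by invoking the structural result established in Section \ref{S: groupoids}. By Lemma \ref{L: admissible commutator}, each $\eta_{U,g,h}$ is a $3$-cycle lying in $[[M]]_t'$, so one inclusion is immediate: the subgroup $N$ they generate satisfies $N\le [[M]]_t'$. For the reverse inclusion, I would recall from the proof of the earlier lemma (the one establishing that $[[\G]]_t'$ is perfect and doubly infinitesimally generated) that $[[M]]_t'$ is \emph{generated by $3$-cycles}. Thus it suffices to express an arbitrary $3$-cycle $g\in [[M]]_t'$ as a product of elements of the form $\eta_{U,g,h}$.

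First I would unwind the definition of $\eta_{U,g,h}$ for an admissible triplet $(U,g,h)$: it is the homeomorphism cyclically permuting the three disjoint clopen sets $g^{-1}U, U, hU$ via the local pieces $g,h,g^{-1}h^{-1}$, and acting trivially elsewhere. The key observation is that any $3$-cycle in $[[M]]_t'$, whose support decomposes as $\supp= V_1\sqcup V_2\sqcup V_3$ with the element sending $V_i$ to $V_{i+1}$ cyclically, is of exactly this shape: one sets $U=V_2$, chooses $g,h\in M$ realizing the relevant local bisections so that $g^{-1}U=V_1$ and $hU=V_3$, and checks admissibility (the three sets are disjoint by construction since they are the distinct pieces of the support of a $3$-cycle). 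Here I would use that $[[M]]$ is the topological full group of the groupoid of germs, so every element is \emph{locally} given by elements of $M$; by compactness of $X$ and the clopen structure, after refining into finitely many clopen pieces the local action of the $3$-cycle on $V_1\to V_2$ and on $V_2\to V_3$ is given by genuine elements $g,h\in M$.

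The main obstacle I anticipate is that a single pair $(g,h)\in M$ may not realize the local homeomorphism on the \emph{entire} set $U$ at once — the $3$-cycle is only \emph{locally} given by elements of $M$, so different clopen subsets of $U$ may require different group elements. To handle this I would partition $U$ into finitely many clopen subsets $U=U_1\sqcup\cdots\sqcup U_k$ on each of which both transition maps are given by single elements $g_\ell, h_\ell\in M$, forming admissible triplets $(U_\ell, g_\ell, h_\ell)$ (admissibility being inherited since the three families of clopen pieces remain disjoint). The original $3$-cycle then factors as the product $\prod_\ell \eta_{U_\ell, g_\ell, h_\ell}$, because these factors have disjoint supports and each reproduces the $3$-cycle on the corresponding sub-orbit. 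Combined with the fact that the $\eta_{U_\ell,g_\ell,h_\ell}$ lie in $N$, this exhibits every generating $3$-cycle of $[[M]]_t'$ inside $N$, giving $[[M]]_t'\le N$ and hence equality. The only routine checks remaining are the verification of admissibility and disjointness of supports, both of which follow directly from the partition and the defining disjointness of the three pieces of a $3$-cycle's support.
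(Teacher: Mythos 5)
Your proof is correct, and its core mechanism is the same as the paper's: partition the middle piece of the $3$-cycle's support into clopen sets on which the transition maps are realized by single elements of $M$, and write the $3$-cycle as a product of the resulting commuting elements $\eta_{U_\ell,g_\ell,h_\ell}$ (the paper phrases this as covering the support by $k$-invariant clopen sets $W=W_1\sqcup W_2\sqcup W_3$ and refining the cover to a partition). Where you genuinely diverge is the reduction step, and your route is arguably more robust. The paper reduces to an \emph{arbitrary element of order $3$}, using simplicity of $[[M]]_t'$, and must then invoke topological regularity of $M\acts X$ (Theorem \ref{T: Bratteli}) to know that the fixed-point set of such an element is clopen --- without that, its support could not be chopped into clopen pieces at all. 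You instead reduce to \emph{$3$-cycles} in the sense of Section \ref{S: groupoids}, whose supports are clopen by definition, quoting the fact established inside the proof of the lemma on perfectness of $[[\G]]_t'$ that, for any minimal $\G$, the group $[[\G]]_t'$ coincides with the subgroup generated by $3$-cycles. Consequently your argument makes no use of Theorem \ref{T: Bratteli} and would prove the analogous statement for the torsion-derived full group of any minimal groupoid, whereas the paper's proof is tied to the mother group through regularity; the paper's version, in exchange, uses only the black-box statements of simplicity and regularity rather than a fact extracted from the interior of an earlier proof. Two small points you leave implicit are routine but worth recording: admissibility of $(U_\ell,g_\ell,h_\ell)$ holds because $g_\ell^{-1}U_\ell\subset V_1$, $U_\ell\subset V_2$, $h_\ell U_\ell\subset V_3$ are contained in the three disjoint pieces of the support, and the factor $\eta_{U_\ell,g_\ell,h_\ell}$ agrees with the $3$-cycle $k$ on the third piece $k(U_\ell)$ because $k^3=\operatorname{id}$ forces $k|_{k(U_\ell)}=(h_\ell g_\ell)^{-1}=g_\ell^{-1}h_\ell^{-1}$ there.
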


\begin{proof}
As in \cite{Matui:simple}, the starting observation is that since $[[M]]_t'$ is simple, it is generated by its elements of order 3 (note that there are such elements, for instance by Lemma \ref{L: admissible commutator}). Let $k\in [[M]]_t$ with order 3. Let us show that it can be written as a product of elements of the form $\eta_{U, g, h}$. Since the action of $M$ on $X$ is topologically regular (Theorem~\ref{T: Bratteli}), the set of fixed points of $k$ is clopen. Let $V$ be its complement. $V$ is covered by clopen sets of the form $W=W_1\sqcup W_2\sqcup W_3$ so that $k$ permutes cyclically the $W_i$ and so that the restriction of $k$ to each $W_i$ coincides with the restriction of an element of $M$. After taking a finite sub-cover and refining it, we may suppose that $V$ is partitioned into clopen sets $W$ of this form. Note that given such a set $W$, the element of $[[M]]$ that acts as $k$ on $W$ and as the identity elsewhere can be written in the form $\eta_{W_1, g, h}$ for some $g, h\in M$. Thus $k$ can be written as a product of commuting elements of this form.

\end{proof}
\begin{lemma}\label{L: disjoint cylinders}
For every $R>0$, there exists $n>0$ such that the following holds. Let $(\Gamma, \gamma) \in X$. Assume that $\rho\neq \gamma$ is a vertex of $\Gamma$ lying at distance less than $R$ from $\gamma$. Then $(\Gamma|_{n}, \gamma)\neq (\Gamma|_{n}, \rho)$. \end{lemma}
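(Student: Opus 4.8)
The plan is to read this as a uniform \emph{expansiveness} estimate and to extract the uniformity from compactness of $X$, rather than from any combinatorial analysis of the graphs. First I would fix $R$ and let $F=\{g\in M: 1\le |g|_S< R\}$ be the (finite) punctured ball of radius $R$ in the word metric associated to $S=A\cup B$. Since the edges of every Schreier graph $\Gamma$ are labelled by $S$, a vertex $\rho\neq\gamma$ of $\Gamma$ at distance less than $R$ from $\gamma$ is exactly a vertex of the form $\rho=g\gamma$ with $g\in F$ and $g\gamma\neq\gamma$. Thus it suffices to produce, for each $g\in F$, an integer $n_g$ such that $(\Gamma|_{n_g},\gamma)\neq(\Gamma|_{n_g},g\gamma)$ whenever $g\gamma\neq\gamma$, and then set $n=\max_{g\in F}n_g$.

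Fix $g\in F$. Because the action $M\acts X$ is topologically regular (Lemma~\ref{L: X_B topologically regular}, Theorem~\ref{T: Bratteli}), the fixed-point set of $g$ is clopen, so $X_g=\{(\Gamma,\gamma)\in X: g\gamma\neq\gamma\}$ is clopen and hence compact. Under the identification $X\simeq\widetilde X$ a point is literally a sequence, so on $X_g$ the two basepoints $\gamma$ and $g\gamma$ are \emph{distinct} points of $X$ (equivalently, $\stab$ is injective on $X$, which is part of the proof of Theorem~\ref{T: Bratteli}). Since the central Gray code cylinders form a basis of clopen sets, they separate points; moreover, for a fixed $P=(\Gamma,\gamma)$ the cylinders $C_{\Gamma|_n,\gamma}$ are nested decreasing in $n$ (a depth-$(n{+}1)$ piece determines the depth-$n$ piece by restricting to the central $2n+1$ vertices of the Gray code line), so $\bigcap_n C_{\Gamma|_n,\gamma}=\{P\}$. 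Consequently, for every $(\Gamma,\gamma)\in X_g$ the separation depth
\[
N_g(\Gamma,\gamma)=\min\{n\ge 1: (\Gamma|_n,\gamma)\neq (\Gamma|_n,g\gamma)\}
\]
is finite.

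The remaining point is uniformity. For each fixed $n$ the maps $(\Gamma,\gamma)\mapsto(\Gamma|_n,\gamma)$ and $(\Gamma,\gamma)\mapsto(\Gamma|_n,g\gamma)$ are locally constant: the first because central Gray code cylinders are clopen, the second because it is the composition of the first with the homeomorphism $(\Gamma,\gamma)\mapsto(\Gamma,g\gamma)$ of $X$. Hence $\{N_g\le n\}=\{(\Gamma,\gamma): (\Gamma|_n,\gamma)\neq(\Gamma|_n,g\gamma)\}$ is clopen, and by the nesting above these sets increase with $n$ and have union $X_g$. Compactness of $X_g$ then yields a single $n_g$ with $N_g\le n_g$ on all of $X_g$. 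Taking $n=\max_{g\in F}n_g$ finishes the argument: if $\rho\neq\gamma$ lies at distance less than $R$, write $\rho=g\gamma$ with $g\in F$, and then $(\Gamma|_n,\gamma)\neq(\Gamma|_n,g\gamma)=(\Gamma|_n,\rho)$ by monotonicity. I expect the only genuinely delicate step to be verifying the clopenness and monotonicity of the sublevel sets $\{N_g\le n\}$; the heart of the matter is simply that $X$ is compact and each $X_g$ is clopen, so \emph{no} use of Proposition~\ref{P: marginals} or of the fine combinatorics of the Schreier graphs is needed for this lemma.
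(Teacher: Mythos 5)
Your proof is correct, and it reaches the lemma by a compactness mechanism that is cognate to, but organized quite differently from, the paper's. The paper argues by contradiction with \emph{sequential} compactness: given counterexamples $(\Gamma_n,\gamma_n,\rho_n)$ at every depth $n$, it passes to a convergent subsequence, obtains a single limit graph $\Gamma$ with two distinct basepoints that are indistinguishable at all scales, and hence a non-trivial automorphism of a labelled Schreier graph in $X$, contradicting Remark~\ref{R: trivial normalizer}. You instead run a direct covering-compactness argument, decomposed over the finitely many $g$ with $1\le |g|_S<R$: the separation depth $N_g$ is finite on $X_g$, its sublevel sets are clopen, and compactness of $X_g$ gives a uniform bound. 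Both proofs consume the same essential input, namely that distinct vertices of a graph in $X$ define distinct points of $X$: you get it from injectivity of $\stab$ (proof of Theorem~\ref{T: Bratteli}), the paper phrases it as triviality of the automorphism group (Remark~\ref{R: trivial normalizer}); these are the same statement, since labelled-graph automorphisms of the Schreier graph of $H\le M$ correspond to $N_M(H)/H$. The trade-off is that your argument additionally needs topological regularity of $M\acts X$ (Lemma~\ref{L: X_B topologically regular}) to know the non-fixed locus $X_g$ is closed, an input the paper's sequential argument never uses, whereas you avoid the paper's implicit K\"onig-type step of assembling an automorphism of the limit graph from isomorphisms of its finite pieces. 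One step you should spell out more carefully: the lemma's hypothesis concerns vertices of $\Gamma$ (cosets of the subgroup), while your $X_g$ concerns points of $X$; the bridge is that the vertex set of $\Gamma$ identifies with the $M$-orbit of the corresponding sequence in $\widetilde{X}$ precisely because that point of $X$ equals the full stabiliser of the sequence, and only then does injectivity of $\stab$ convert ``distinct sequences'' into ``distinct points of $X$''. With that made explicit the argument is complete, and you are right that Proposition~\ref{P: marginals} plays no role here --- the paper's own proof does not use it either.
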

\begin{proof}
Assume that there is a sequence $(\Gamma_n, \gamma_n, \rho_n)$ verifying the assumptions so that for every $n$ we have $(\Gamma_n|_n, \gamma_n)=(\Gamma_n|_n, \rho_n)$. Up to taking a subsequence we may assume that $(\Gamma_n, \gamma_n)$ and $(\Gamma_n, \rho_n)$ converge to limits $(\Gamma, \gamma)$ and $(\Gamma, \rho)$, where the limit graph $\Gamma$ is the same with two different basepoints (here we use the assumptions that $\gamma_n$ and $\rho_n$ are at bounded distance in $\Gamma_n$). Then $\gamma$ and $\rho$ are indistinguishable in $\Gamma$. Arguing as in the proof of Lemma~\ref{L: first reduction}, this contradicts Remark~\ref{R: trivial normalizer}, since then $\Gamma$ has a non-trivial automorphism.  \qedhere
\end{proof}

We will need the following simple fact.

\begin{lemma}
Let $\Delta$ be a finite connected graph, with vertex set $\{1, \ldots, n\}$. Then the alternating group over $n$ elements is generated by 3-cycles of the form $(x,y,z)$, where $x, y, z\in \{1,\ldots, n\}$ are the vertices of a simple path of length 3 in $\Delta$. 
\end{lemma}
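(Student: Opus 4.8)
The plan is to reduce to a spanning tree and then induct on $n$. First I would fix a spanning tree $T\subseteq\Delta$; since $T$ has the same vertex set and every path in $T$ is also a path in $\Delta$, the $3$-cycles coming from paths of $T$ form a subset of the admissible generators. Hence it suffices to treat the case $\Delta=T$, and more precisely to prove that the subgroup $G$ generated by the path $3$-cycles of $T$ equals $A_n$. Note that $G\le A_n$ automatically, since $3$-cycles are even permutations. The base case is $n=3$: here $T$ is a path $x-y-z$, the only admissible generator is $(x,y,z)^{\pm1}$, and $\langle(x,y,z)\rangle=A_3$.

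For the inductive step $n\ge 4$, I would pick a leaf $v$ of $T$ with unique neighbour $u$. Because $T$ is a connected tree on $n\ge 3$ vertices, the vertex $u$ has degree at least $2$, so it has a second neighbour $w\neq v$; thus $v-u-w$ is a path of $T$ and the $3$-cycle $(v,u,w)$ lies in $G$. Deleting $v$ yields a tree $T'$ on the $n-1$ vertices $\{1,\dots,n\}\setminus\{v\}$, whose path $3$-cycles are among those of $T$; by the induction hypothesis they generate the full alternating group $A_{n-1}$ on $\{1,\dots,n\}\setminus\{v\}$, so $G$ contains this copy of $A_{n-1}$.

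It then remains to combine $A_{n-1}\le G$ with the single cycle $(v,u,w)\in G$, and here I would use a counting argument rather than an explicit conjugation computation. Since $n-1\ge 3$, the subgroup $A_{n-1}$ acts transitively on $\{1,\dots,n\}\setminus\{v\}$; together with $(v,u,w)$, which sends $v\mapsto u$, this shows that $G$ acts transitively on all of $\{1,\dots,n\}$. The point stabiliser $G_v$ contains $A_{n-1}$, so by the orbit–stabiliser relation $|G|=n\,|G_v|\ge n\,|A_{n-1}|=n\cdot (n-1)!/2=n!/2=|A_n|$. Combined with $G\le A_n$ this forces $G=A_n$, completing the induction and hence the proof for $\Delta$.

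I do not anticipate a genuine obstacle in this argument; it is a clean reduction plus the standard fact that a point stabiliser equal to $A_{n-1}$ inside a transitive subgroup of $A_n$ already accounts for all of $A_n$ by cardinality. The only points needing a brief verification are that the neighbour of a leaf has a second neighbour (which fails only when $n=2$, excluded by $n\ge 3$) and that the transitivity input $A_{n-1}$ is available, which is precisely why the induction is based at $n=3$.
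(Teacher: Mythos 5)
Your proof is correct. Note that the paper itself offers no argument for this lemma at all --- it is introduced with ``We will need the following simple fact'' and stated without proof --- so there is no proof in the source to compare against; your write-up supplies the missing verification. Your route (pass to a spanning tree, induct on the number of vertices by removing a leaf $v$ with neighbour $u$ and second neighbour $w$, then combine the inductively obtained copy of $A_{n-1}$ with the cycle $(v,u,w)$ via transitivity and orbit--stabiliser counting) is the standard way to prove this fact, and every step checks out: the reduction to the tree is legitimate since the tree's path $3$-cycles are a subset of the admissible generators and all generators are even; the degree-$\geq 2$ claim for $u$ holds precisely because $n\geq 3$; and the counting $|G| = n\,|G_v| \geq n\,|A_{n-1}| = |A_n|$ together with $G\leq A_n$ closes the induction. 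The only stylistic alternative would have been to show directly that $G$ contains all $3$-cycles (by conjugating $(v,u,w)$ by elements of $A_{n-1}$ and invoking the classical fact that all $3$-cycles generate $A_n$), but your cardinality argument avoids even that classical input, which makes it arguably more self-contained.
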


\begin{lemma}\label{L: first reduction}
The elements $\eta_{U,s,t}$ generate $[[M]]_t'$ as $(U,s,t)$ varies among admissible triplets so that $s,t\in S=A \cup B$.

\end{lemma}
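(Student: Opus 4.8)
The plan is to reduce the generators $\eta_{U,g,h}$ (for arbitrary $g,h\in M$) that were shown to generate $[[M]]_t'$ in the preceding lemma, down to those with $g,h$ in the finite generating set $S=A\cup B$. The key structural tools are Lemma~\ref{L: marginals and cylinders} (which lets one write a central Gray code cylinder as an intersection of cylinders for its marginals) and Lemma~\ref{L: disjoint cylinders} (which provides, for each fixed radius $R$, a scale $n$ separating any two distinct nearby vertices by their central Gray code pieces). First I would fix an arbitrary admissible triplet $(U,g,h)$ and, by writing $g=s_k\cdots s_1$ and $h=t_l\cdots t_1$ as words in $S$, attempt to factor $\eta_{U,g,h}$ as a product of elements $\eta_{U',s,t}$ with single generators $s,t\in S$. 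The idea, following Matui, is that moving a point along the path labelled by $g$ one generator at a time corresponds to a telescoping product of ``elementary moves'', each realized by an $\eta$ supported on a small cylinder and using a single $s\in S$.

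The central point is to arrange the supports to be disjoint and the individual moves to be genuinely admissible. Here I would shrink $U$ to a central Gray code cylinder $C_{\Gamma|_n,\gamma}$ with $n$ chosen large (via Lemma~\ref{L: disjoint cylinders}) relative to the word lengths $k,l$, so that the whole $g$-path and $h$-path starting from any point of $U$ stay ``visible'' inside a Gray code piece of controlled size and remain distinguishable. Because the action of each $s\in S$ only modifies a visible bit (Remark~\ref{R: visible}), each elementary step $s$ acts on the cylinder by a partial homeomorphism that agrees with $s$ on a suitable subcylinder; Lemma~\ref{L: marginals and cylinders} then guarantees these subcylinders are clopen and can be described purely in terms of marginals. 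Summing over a finite clopen partition of $X$ refining $U$ (obtained by compactness), I would write $\eta_{U,g,h}$ as a finite product of conjugates of elements $\eta_{U',s,t}$, each conjugation being by an element of $[[M]]_t'$ that itself decomposes into single-generator $\eta$'s. Using that $[[M]]_t'$ is generated by $3$-cycles and that each $\eta_{U,g,h}$ is such a $3$-cycle (Lemma~\ref{L: admissible commutator}), this reduction suffices.

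The main obstacle I expect is the bookkeeping required to guarantee admissibility at each elementary step, i.e.\ that the three pieces $U',\,s^{-1}U',\,tU'$ stay disjoint as one telescopes along the words. This is where the disjointness afforded by Lemma~\ref{L: disjoint cylinders} is essential: by taking $n$ large compared to $R=k+l$, the relevant translates of a small central cylinder are forced to be distinct, so one can separate supports and invoke the $S_3$-relations without collisions. A secondary technical point is to verify that the partial action of each $s\in S$ on the pointed Gray code pieces is well defined and stays inside the piece (so that the move is realized within the cylinder rather than leaving it); this follows from choosing $n$ large enough that the $g$- and $h$-paths never exit the marginals, using the visibility constraint of Remark~\ref{R: visible} to control which bits are affected. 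Once disjointness and containment are secured, the algebraic identity expressing a long $\eta$ as a product of short ones is routine group theory in $[[M]]_t'$, completing the proof.
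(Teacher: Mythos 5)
Your proposal is correct and takes essentially the same route as the paper: shrink $U$ to central Gray code cylinders long enough that the $g$- and $h$-paths are defined inside the piece, invoke Lemma~\ref{L: disjoint cylinders} so that the cylinders based at distinct vertices along those paths are pairwise disjoint, and then write the 3-cycle $\eta_{U,g,h}$ as a product of single-generator 3-cycles along the path (the paper packages this last step as the fact that the alternating group on the vertex set of a finite connected graph is generated by 3-cycles along simple paths, acting on the disjoint cylinders, whereas you do the equivalent telescoping by hand). Note only that Lemma~\ref{L: marginals and cylinders} plays no role in this reduction---the disjointness from Lemma~\ref{L: disjoint cylinders} is all that is needed; the marginals lemma enters later, in the commutator trick of Lemma~\ref{L: commutator trick}.
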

\begin{proof}
Consider $\eta_{g,h, U}$ with $g,h$ arbitrary. By writing $U$ as a disjoint union of central Gray code cylinders, we may suppose $U=C_{I, \gamma}$  is a central Gray code cylinder whose Gray code piece $(I, \gamma)$ is such that the length of $I$ is bigger than $2\max\{|g|_S, |h|_S\}$ so that $(I, g\gamma)$ and $(I, h^{-1}\gamma)$ are defined, and $gU$ and $h^{-1}U$ are the corresponding Gray-code cylinders.
Let $\Delta\subset I$ be a minimal connected subgraph containing $h^{-1}\gamma, \gamma$ and $g\gamma$, and let $R$ be its diameter.  and let $n_0$ be given by Lemma~\ref{L: disjoint cylinders}. Up do decomposing again into Gray code cylinders of bigger length, we may assume that the length of $I$ is at least $n_0+R$. Then Lemma~\ref{L: disjoint cylinders} guarantees the following: if $\delta, \delta'\in \Delta$ are distinct, then the Gray code cylinders $C_{I, \delta}$ and $C_{I, \delta'}$ are disjoint. Hence $[[M]]$ contains a copy of the symmetric group acting over $|\Delta|$ elements that acts by permuting such cylinders. Then $\eta{g, h, U}$ corresponds to the 3-cycle $(h^{-1}\gamma, \gamma, g\gamma)$. Moreover 3-cycles permuting adjacent elements of $\Delta$ are of the form $\eta_{C_{I, \delta}, s, t}$ with $s,t\in S$. Hence $\eta_{g, h, U}$ belongs to the group generated by such elements. \qedhere

%

\end{proof}

From this point on, let $n_0$ be the integer furnished by Lemma~\ref{L: disjoint cylinders} for $R=8$.
In the next definition and the following lemma, it will be convenient slightly extend the generating set by setting $\widetilde{S}=S^2$. Note that this is still a generating set since $e\in S$.

\begin{defin}\label{D: convenient admissible}
We say that  $(U, s, t)$ is a \emph{convenient admissible triplet} if it is an admissible triplet that has the following form: $U=C_{I, \gamma}$ is a central Gray code cylinder where $I$ has length at least $2n_0+1$, $s, t\in \widetilde{S}$, and $s^{-1}\gamma, \gamma, t\gamma$  project to three distinct consecutive points in the Gray code line. 

\end{defin}

The following Lemma is based on the same commutator trick as in~\cite[Lemma 5.3]{Matui:simple}. The possibility to apply it in this situation relies on the combinatorial nature of Gray code cylinders (Proposition~\ref{P: marginals} and Lemma \ref{L: marginals and cylinders}).
\begin{lemma}\label{L: commutator trick}
Let $(U, s,t)$ be a convenient admissible triplet, with $U=C_{I, \gamma}$. Let $(I_l, \gamma),  (I_r, \gamma)$ be the marginals of $(I, \gamma)$. Observe that the definition of convenient admissible triplet implies that $(I_l, s^{-1}\gamma)$ and $(I_r, t\gamma)$ are central Gray code pieces of smaller length.  Set $U_l=C_{I_l, s^{-1}\gamma}$ and $U_r=C_{I_r, t\gamma}$. Choose $s', t'$ such that the triplets $(U_l, s', s)$ and $(U_r, t, t')$ are admissible. Then we have
\[[\eta_{U_r,t,t'},\eta^{-1}_{U_l,s',s}] = \eta_{U, s, t}.\]

\end{lemma}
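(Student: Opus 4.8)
<br>

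The goal is to verify the commutator identity
\[[\eta_{U_r,t,t'},\eta^{-1}_{U_l,s',s}] = \eta_{U, s, t}.\]
The plan is to unwind both sides as explicit partial homeomorphisms of $X$ and compare them on the relevant cylinders, using the marginal decomposition from Lemma \ref{L: marginals and cylinders} to identify where each factor is supported.

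First I would record precisely the supports and the action of each building block. By definition $\eta_{U,s,t}$ acts as $s$ on $s^{-1}U$, as $t$ on $U$, as $s^{-1}t^{-1}$ on $tU$ (wait: it acts as $g,h,g^{-1}h^{-1}$ on $g^{-1}U,U,hU$; here the roles are $g=s$, $h=t$), so $\eta_{U,s,t}$ is the $3$-cycle pushing $s^{-1}U\to U\to tU\to s^{-1}U$ along the three consecutive cylinders in the Gray code line. The crucial point I would isolate is the relationship between the three cylinders $U=C_{I,\gamma}$, $U_l=C_{I_l,s^{-1}\gamma}$ and $U_r=C_{I_r,t\gamma}$. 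By Lemma \ref{L: marginals and cylinders} we have $U=C_{I_l,\gamma}\cap C_{I_r,\gamma}$, and more importantly the marginal cylinders $U_l, U_r$ are \emph{strictly larger} clopen sets than the corresponding pieces of $U$: passing from the central piece $I$ to a marginal $I_l$ (resp.\ $I_r$) forgets information on the right (resp.\ left) end of the Gray code line. I would compute how $\eta_{U_l,s',s}$ and $\eta_{U_r,t,t'}$ act on the images of $U$ under the maps built into $\eta_{U,s,t}$, i.e.\ on $s^{-1}U, U, tU$ and their further translates.

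The heart of the argument is the classical commutator trick from \cite[Lemma 5.3]{Matui:simple}: two $3$-cycles whose supports overlap in exactly one of the three relevant cylinders have as commutator a single $3$-cycle on the symmetric difference. Concretely, I would check that $\eta_{U_l,s',s}$ moves $s^{-1}U$ to $U$ (matching the first ``leg'' of $\eta_{U,s,t}$, since $(I_l,s^{-1}\gamma)$ and $(I_l,\gamma)=$ one step of $\eta_{U,s,t}$ agree as marginals of $U$), while $\eta_{U_r,t,t'}$ moves $U$ to $tU$ (the second leg). The remaining legs, involving $s'$ and $t'$, are supported on cylinders disjoint from $U\cup s^{-1}U\cup tU$ by admissibility of the triplets $(U_l,s',s)$ and $(U_r,t,t')$; when one forms the commutator these ``extra'' legs cancel in pairs, leaving exactly the composite motion $s^{-1}U\to U\to tU$ and its inverse, which is $\eta_{U,s,t}$. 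I would carry this out by tracking a generic point $x$ through all four factors case by case, according to which of the finitely many cylinders ($s^{-1}U$, $U$, $tU$, the $s'$- and $t'$-images, and the fixed complement) it lies in; outside the union of these cylinders everything acts trivially, so only finitely many cases arise.

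The main obstacle I anticipate is purely bookkeeping: making sure the marginal identity is used correctly so that $\eta_{U_l,s',s}$ genuinely carries $s^{-1}U$ onto the subcylinder of $U$ on which $\eta_{U,s,t}$ subsequently acts by $t$, rather than onto a shifted or larger set. This is where Proposition \ref{P: marginals} and Lemma \ref{L: marginals and cylinders} are essential: they guarantee that although $U_l$ and $U_r$ are larger than $U$, the restrictions of the four partial bijections to the chain $s^{-1}U\to U\to tU$ agree exactly on the orbit relevant to $\eta_{U,s,t}$, while all non-matching parts of the supports are pairwise disjoint and hence disappear in the commutator. Once the supports are aligned, the identity reduces to the elementary computation in the symmetric group $S_n$ that $[(b\,c),(a\,b)]=(a\,b\,c)$, applied fiberwise over the finite set of cylinders permuted by these elements.
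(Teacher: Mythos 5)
Your overall strategy coincides with the paper's: identify the commutator as an instance of Matui's trick for two $3$-cycles of clopen sets overlapping in a single block, and use Lemma~\ref{L: marginals and cylinders} to pin down the overlap, namely $s(U_l)\cap t^{-1}(U_r)=C_{I_l,\gamma}\cap C_{I_r,\gamma}=C_{I,\gamma}=U$. That part of your proposal is correct and matches the paper's first step.

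There is, however, a genuine gap in how you justify disjointness. For the identity to hold you need \emph{every} pair among the six cylinders $s'^{-1}(U_l),\, U_l,\, s(U_l),\, t^{-1}(U_r),\, U_r,\, t'(U_r)$, other than the pair $\bigl(s(U_l), t^{-1}(U_r)\bigr)$ whose intersection is $U$, to be disjoint. You attribute this to ``admissibility of the triplets $(U_l,s',s)$ and $(U_r,t,t')$'' and, later, to Proposition~\ref{P: marginals} and Lemma~\ref{L: marginals and cylinders}. Neither suffices: admissibility only gives disjointness \emph{within} each triplet and says nothing about cross-intersections such as $U_l\cap U_r$, $U_l\cap t^{-1}(U_r)$, or $s'^{-1}(U_l)\cap t'(U_r)$; and the marginal results give the overlap statement, not any disjointness. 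If, say, $s'^{-1}(U_l)$ met $t'(U_r)$, the commutator would move points of that intersection on which $\eta_{U,s,t}$ acts trivially, and the identity would fail. The missing ingredient---and the entire reason the triplet is required to be \emph{convenient}---is Lemma~\ref{L: disjoint cylinders}: since $I$ has length at least $2n_0+1$ (with $n_0$ chosen for $R=8$ in Definition~\ref{D: convenient admissible}) and the six cylinders are Gray code cylinders based at distinct points within bounded distance of $\gamma$, all the unwanted pairs are disjoint. Your argument never invokes Lemma~\ref{L: disjoint cylinders} or the length hypothesis, so the case-by-case tracking you propose cannot be completed as written.
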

\begin{proof}
First, observe that by Lemma~\ref{L: marginals and cylinders}
\[s(U_l)\cap t^{-1}(U_r)=C_{I_l, v}\cap C_{I_r, v}=C_{I,v}=U,\]
where in the second equality we used Lemma~\ref{L: marginals and cylinders}.

Second observe that Lemma~\ref{L: disjoint cylinders} together with the assumption that the length of $I$ is at least $2n_0+1$ (made in Definition~\ref{D: convenient admissible}) implies that all other pairs in the list $s'^{-1}(U_l), U_l, s(U_l), t^{-1}(U_r),U_r, t'(U_r)$ are disjoint.

Third and last, observe that if $(W,s',s),(V,t,t')$ are any two admissible triplets such that $sW\cap t^{-1}V\neq \varnothing$ and such that all other pairs in the list  $s'^{-1}(W), W, s(W), t^{-1}(V),V, t'(V)$ are disjoint, then we have the identity
\[[\eta_{V,t,t'},\eta^{-1}_{W,s',s}] = \eta_{tW\cap s'^{-1}V,s,t}.\]
The last identity applied to $W=U_l, V=U_r$ gives the desired conclusion.
\end{proof}

\begin{proof}[Proof of Theorem \ref{T: finitely generated}]
Consider the set $T=\{\eta_{V, s,t}\}$, where $(V, s,t)$ runs over convenient admissible triplets such that $V$ is a Gray-code cylinder of length $2n_0+1$. We shall show that $T$ generates $[[M]]_t'$.

Let us first show that $\langle T \rangle $ contains all elements $\eta_{U, s, t}$ where $(U, s, t)$ is any convenient admissible triplet.  Let $(I, v)$ be the Gray code piece corresponding to $U$. We prove the claim by induction on $2n+1$, the length of $I$. Since $(I_l, s^{-1}v)$ and $(I_r, t(v))$ are centered of length $2n-1$, by the inductive assumption we have that $\eta_{U_l,s', s}$ and $\eta_{U_r, t, t'}$ belong to $\langle T\rangle$, where $U_l, U_r, s', t'$ are as in Lemma~\ref{L: commutator trick}. By Lemma~\ref{L: commutator trick} also $\eta_{U, s,t}\in \langle T \rangle$.

 To conclude the proof, by Lemma  \ref{L: first reduction} it is enough to show that every element of the form $\eta_{U, s, t}$, with $U$ clopen and $s,t\in S$,  lies in $\langle T\rangle$ (here the triplet $(U,s,t)$ is admissible, but not necessarily convenient). By taking a partition of $U$ into central Gray code cylinders, we may assume that $U$ is a centered Gray code cylinder of depth $2n+1\geq 2n_0+1$. Let it correspond to $(I, v)$. If $s^{-1}(v), v, tv$ project to three consecutive points on the Gray code line, then the triplet is convenient and we are done. The other cases are covered by taking suitable conjugates of convenient admissible triplets (the reason why we used the generating set $S^2$ instead of $S$ in the definition of a convenient admissible triplet is that this gives enough room to perform this step).
 Consider the case where $v, tv$ project to the same point on the Gray code line. Pick $t'\in S$ so that the points $s^{-1}(v), v, t'(v)$ project to three distinct consecutive points (hence, the triplet $U, s, t'$ is convenient admissible). Choose also $s'\in S^2$ so that $s'^{-1}(v)\neq s^{-1}(v)$ and $s'^{-1}(v), s^{-1}(v)$  project to the same point on the line (note that it may not be possible if we considered the generating set $S$  only). Let $V$ be the cylinder corresponding to $(I, t(v))$. Then the triplet $V, s', t't^{-1}$ is convenient admissible, and we have
 \[\eta_{U, s, t}= \eta_{V, s', t't^{-1}}^{-1}\eta_{U, s, t'}\eta_{V, s', t't^{-1}}.\]
 The other cases are done similarly.
  \qedhere 

\end{proof}
\subsection{Proof of Proposition~\ref{P: marginals}}
\label{S: proof of prop}

\begin{defin}
Let $(I, \gamma)\subset \Gamma$ be a central Gray code piece. We say that $(I, \gamma)$ \emph{branches at the left} if the left marginal $(I_l, \gamma)$ contains vertices that project to $\overline{I}_c$ but that do not belong to $I_c$. The connected components of $p^{-1}(\overline I_c)\cap I_l$ are called the \emph{left branches}. The left branch containing $\gamma$ coincides with $I_c$ and is called the \emph{core branch}.
 In the same way we define branching at the right. We say that $(I, \gamma)$ \emph{bi-branches} if it branches both at the left and at the right. 

\end{defin}
We begin with the following special case of Proposition~\ref{P: marginals}.
\begin{lemma}
Let $(I, \gamma)\subset \Gamma$ be a central Gray code piece. If $(I, \gamma)$ does not bi-branch, then its isomorphism class is uniquely determined by the isomorphism class of its marginals.
\end{lemma}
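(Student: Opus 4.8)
The plan is to reconstruct $(I,\gamma)$ by gluing its two marginals along their common central part, using the non-bi-branching hypothesis to make the gluing rigid. Without loss of generality assume that $(I,\gamma)$ does not branch at the left, so that $I_l\cap p^{-1}(\overline{I}_c)$ is connected and equals the core branch $I_c$. Let $(I',\gamma')$ be any central Gray code piece whose marginals are isomorphic, as pointed labelled graphs, to those of $(I,\gamma)$; since branching at the left is detected inside the left marginal (it is the statement that the basepoint component of the restriction to the central columns is proper), $(I',\gamma')$ also fails to branch at the left. Fix pointed isomorphisms $\phi\colon(I_l,\gamma)\to(I_l',\gamma')$ and $\psi\colon(I_r,\gamma)\to(I_r',\gamma')$; the goal is to assemble $\phi$ and $\psi$ into a single pointed isomorphism $(I,\gamma)\to(I',\gamma')$.

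First I would note that the projection $p$, and hence the decomposition into columns of $\overline{\Gamma}$, is intrinsic to the labelled graph structure, being read off from the $A$- and $B$-edge labels; consequently any pointed labelled isomorphism preserves line coordinates on interior vertices and in particular maps the central columns $p^{-1}(\overline{I}_c)$ to the central columns. Thus $\phi$ and $\psi$ both restrict to pointed isomorphisms $(I_c,\gamma)\to(I_c',\gamma')$ of the core. Next I would check that these two restrictions agree. For this I would establish a local rigidity statement: a pointed Gray code piece admits \emph{at most one} pointed labelled isomorphism onto another piece. This should follow from the same mechanism as Remark~\ref{R: trivial normalizer}, where the ambient Schreier graphs are shown to carry no non-trivial (even unrooted) labelled automorphism. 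Granting this, $\phi|_{I_c}=\psi|_{I_c}$.

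The heart of the argument is then a covering statement: $I=I_l\cup I_r$ with $I_l\cap I_r=I_c$, and likewise for the primed piece. The equality of intersections is exactly the no-left-branching hypothesis, since $I_l\cap I_r\subseteq p^{-1}(\overline{I}_c)$ while $I_l\cap p^{-1}(\overline{I}_c)=I_c$, and the core is also a right branch. Granting $I=I_l\cup I_r$, the map $\Phi$ defined to be $\phi$ on $I_l$ and $\psi$ on $I_r$ is well defined, by $\phi|_{I_c}=\psi|_{I_c}$, and is visibly a pointed labelled isomorphism $(I,\gamma)\to(I',\gamma')$, which proves the lemma.

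The main obstacle is establishing $I=I_l\cup I_r$, i.e.\ that no vertex of $I$ lying over the two extremal columns of $\overline{I}$ escapes the corresponding marginal; equivalently that $I\cap p^{-1}(\overline{I}_l)$ is connected, so that it equals $I_l$. This is where the finer geometry of the Schreier graphs is needed: one must rule out a vertex over the left-extension columns that is joined to $\gamma$ only through the right side. I would control this with the visibility analysis of Remark~\ref{R: visible}, since a generator alters only a visible bit, a path in $I$ reaching the left extension must sweep through the central columns, and together with the line structure of $\overline{\Gamma}$ this confines it to the basepoint component $I_l$. Finally, the role of the non-bi-branching hypothesis is precisely to make the gluing canonical: with several branches on both sides one cannot, from the isomorphism classes of the marginals alone, match up the central components of the two sides, and it is exactly this matching ambiguity, resolved only by the additional bookkeeping in the proof of Proposition~\ref{P: marginals}, that the general bi-branching case requires.
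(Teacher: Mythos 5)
Your plan follows the same route as the paper's own (very terse) proof: glue the two marginals along the core branch, use rigidity of pointed labelled isomorphisms to make the gluing canonical, and use the absence of branching on one side to get $I_l\cap I_r=I_c$. Where you go beyond the paper is in isolating the covering statement $I=I_l\cup I_r$, and that is exactly where the genuine gap lies: the visibility argument you sketch for it fails, and in fact the statement itself is false under the hypothesis of the lemma. ``Does not bi-branch'' still allows branching on one side, say at the right. By Lemma~\ref{L: branching and visible}, such branching is witnessed by a position $r$ which is $B$-visible in $\overline{I}_r\setminus\overline{I}_c$, not visible over $\overline{I}_l$, and whose letter is non-zero over the central columns (one can arrange it non-zero over all of $\overline{I}_l$). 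By Remark~\ref{R: visible}, generators only alter visible positions, so the letter at position $r$ is \emph{frozen} on all of $I_l$: every vertex of $I_l$ carries the same value there as $\gamma$. But inside $I$ one can travel from $\gamma$ to the right extension (edges crossing to an adjacent fibre exist at every vertex), change the letter at position $r$ there by a $B$-edge within the fibre, and then travel back leftwards across the whole segment to a vertex $v$ over the leftmost column. Then $v\in I$, while $v\notin I_r$ (wrong projection) and $v\notin I_l$ (wrong letter at position $r$); hence $I\supsetneq I_l\cup I_r$. So your map $\Phi=\phi\cup\psi$, even once well defined, is an isomorphism onto a proper subgraph of $I'$ and does not prove the lemma. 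A correct argument must account for these extra vertices: when the right marginal branches, each of its branches carries its own ``sheet'' over $\overline{I}_l$ (in the Schreier graph these are parallel copies of $I_l$, differing only in the frozen letters), and one must show that this whole configuration is forced by the isomorphism classes of the two marginals.

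There is a second, smaller unjustified step: you assert that the projection $p$ is intrinsic, ``read off from the $A$- and $B$-edge labels'', and you use this both to transfer no-left-branching to $(I',\gamma')$ and to see that $\phi,\psi$ map core to core. Whether an edge crosses fibres is not determined by its label: an $A$-generator $\sigma$ moves $xw$ within its fibre when $x\neq 0\neq\sigma(x)$ and across fibres when $\sigma(x)=0$, so preservation of line coordinates by an abstract pointed labelled isomorphism requires an argument. To be fair, the paper's three-sentence proof is silent on both issues as well; but your more explicit reconstruction makes clear that the route you propose for the main covering step cannot work as stated, since that step is false precisely in the one-sided branching situation that the hypothesis permits.
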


\begin{proof}
Let $(I_l, \gamma)$ and $(I_r, \gamma)$ be the marginals. Since the position of $\gamma$ is given, the identification between vertices of the marginals is uniquely determined on the core branch. So if one of the marginals does not have any other branches, this determines the isomorphism class of $(I, \gamma)$ completely.
\end{proof}

To conclude, we need to understand in which situations bi-branching may occur.

\begin{lemma}\label{L: branching and visible}
Let $(I, \gamma)\subset \Gamma$ be a Gray code piece. Then $(I, \gamma)$ branches at the left if and only if there is a bit which is $B$-visible in $\overline{I}_l\setminus \overline{I}_c$, is not visible in $\overline{I_c}$ and is non-zero in at least a point of $\overline{I}_c$. The same characterization holds for branching at right.
\end{lemma}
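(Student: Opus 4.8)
The plan is to reduce everything to the explicit description of how generators act on bits, recorded in Remark~\ref{R: visible}, together with a connectedness criterion for Gray code pieces over a sub-segment. Recall that a generator in $A$ modifies only bit $1$ (the $A$-visible bit), while a generator in $B$ has the form $g=(g,\sigma_1,\dots,\sigma_{d-1})\rho$ with $\rho$ fixing $0$: such a $g$ skips the leading zeros of a sequence, permutes the first non-zero letter through $\rho$ (hence keeps it non-zero) and permutes the following letter through some $\sigma_a\in A_d$ (which may toggle its zero/non-zero status). Thus a generator in $B$ modifies exactly the first non-zero bit and the one after it, i.e. the two $B$-visible bits. Since bit $1$ is $A$-visible at every vertex of $\oGamma$, the only positions visible in $\overline{I}_l$ but not in $\overline{I}_c$ are those that are $B$-visible at one of the two leftmost vertices $\overline{\gamma}_{-n},\overline{\gamma}_{-(n-1)}$ and not visible in $\overline{I}_c$; this already matches conditions (1) and (2). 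First I would isolate the following connectedness criterion, proved by a direct reachability argument as in the finiteness lemma for Gray code pieces: two vertices of $I$ that project into a common sub-segment $\overline{J}$ and agree outside the positions visible in $\overline{J}$ lie in the same connected component of $p^{-1}(\overline{J})$; equivalently, vertices lying in distinct components of $p^{-1}(\overline{J})$ must differ in a position not visible in $\overline{J}$.

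For the forward implication, suppose $(I,\gamma)$ branches at the left, so there is a vertex $\delta\in I_l$ with $p(\delta)\in\overline{I}_c$ and $\delta\notin I_c$. Applying the criterion with $\overline{J}=\overline{I}_c$, the sequences $\gamma$ and $\delta$ must differ in some position $r$ that is not visible in $\overline{I}_c$. On the other hand $\delta$ is joined to $\gamma$ by a path inside $I_l$, so $r$ is visible in $\overline{I}_l$; being different from $1$, it is $B$-visible at $\overline{\gamma}_{-n}$ or $\overline{\gamma}_{-(n-1)}$. This gives conditions (1) and (2). For condition (3), note that $\gamma$ and $\delta$ both project into $\overline{I}_c$ and disagree at position $r$; hence at least one of the central projections of $\gamma$ and $\delta$ carries a $\ast$ at position $r$, so bit $r$ is non-zero at a point of $\overline{I}_c$.

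For the converse I would construct a left branch explicitly from a bit $r$ satisfying (1)--(3). Walk from $\gamma$ along $\oGamma$ to the leftmost vertex $v^\ast\in\{\overline{\gamma}_{-n},\overline{\gamma}_{-(n-1)}\}$ at which $r$ is $B$-visible, keeping the projection inside $\overline{I}_l$; this lifts to a path inside $I_l$ ending at some $\gamma^\ast$ over $v^\ast$. At $v^\ast$ apply a generator of $B$ that genuinely alters bit $r$: if $r$ is the first non-zero position use the component $\rho$ (which moves $r$ among non-zero values), and if $r$ is the following position use the relevant $\sigma_a$. Reversing the projection of the approach path, transport the modified vertex back to a central vertex $\delta$. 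The modification survives the return trip because $r$ is invisible over $\overline{I}_c$ (condition (2)), so $\delta$ differs from $\gamma$ at $r$ and, by the connectedness criterion, cannot lie in $I_c$; condition (3) is exactly what guarantees that the central projection reached by $\delta$ still lies in $\overline{I}_c$ and that the modification yields a genuinely distinct vertex. Thus $\delta$ witnesses branching at the left, and the identical argument with the two rightmost vertices of $\overline{I}_r$ gives the characterization of branching at the right.

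I expect the main obstacle to be the bookkeeping in the converse direction: one must check that the excursion to $v^\ast$ and back keeps the projection inside $\overline{I}_l$ and lands inside $\overline{I}_c$, and --- most delicately --- split according to whether $r$ is the first non-zero position of $v^\ast$ or the position immediately following it, since $\rho$ preserves the zero/non-zero status of its letter while $\sigma_a$ need not. In each sub-case condition (3) must be used to ensure that the toggled configuration still projects into the central segment rather than escaping $\overline{I}$, which is the only point where the three hypotheses interact non-trivially.
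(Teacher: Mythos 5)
Your skeleton is sound --- a connectedness criterion for $p^{-1}(\overline{J})$ driving the forward implication, and an explicit walk--modify--return witness for the converse --- and indeed any honest proof of this lemma (which the paper disposes of in one line via Remark~\ref{R: visible}) must supply exactly these two steps. But both steps are left with genuine gaps. The first: your criterion (vertices projecting into $\overline{J}$ and agreeing outside the bits visible in $\overline{J}$ lie in a single component of $p^{-1}(\overline{J})$) is justified by appeal to ``a direct reachability argument as in the finiteness lemma'', but that argument proves the \emph{opposite}, trivial implication: applying Remark~\ref{R: visible} along a path shows that vertices in the same component agree outside the visible bits. The direction you need is the one with content, and it requires a construction: travel to a vertex where the bit is visible, alter it, and return without disturbing it. This is delicate for two reasons you only name in passing: a $B$-move at a vertex with first $\ast$ in position $j$ acts on positions $j$ and $j+1$ simultaneously, with the permutation at $j+1$ depending on the letter at $j$; and altering the zero/non-zero status of a bit moves the projection, possibly out of $\overline{J}$ altogether.

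The second gap is that this same difficulty is what your converse direction runs into, and you explicitly leave it unresolved. As written, the recipe can fail: if $r$ is $B$-visible at $v^\ast=\overline{\gamma}_{-n}$ but equal to $0$ there, toggling it via $\sigma_a$ changes the projection, and the relevant $B$-edge at $\overline{\gamma}_{-n}$ may point outside $\overline{I}$; moreover the return trip passes through $\overline{\gamma}_{-n+1}$ and possibly $\overline{\gamma}_{-n}$, where $r$ \emph{is} visible, so ``invisibility over $\overline{I}_c$'' does not by itself protect the modification. The missing observation --- which is precisely how conditions (2) and (3) interact --- is this: since $r\neq 1$, any edge of $\oGamma$ that flips bit $r$ is a $B$-edge both of whose endpoints have $r$ $B$-visible; by (2) no edge of $\overline{I}$ with an endpoint in $\overline{I}_c$ can therefore flip bit $r$, so the status of bit $r$ is constant along $[\overline{\gamma}_{-n+1},\dots,\overline{\gamma}_{n-1}]$, and by (3) it equals $\ast$ on this whole range; one then checks it is $\ast$ at whichever left vertex has $r$ visible (if it were $0$ at $\overline{\gamma}_{-n}$ and $\ast$ at $\overline{\gamma}_{-n+1}$, the edge between them would flip it, forcing $r$ to be visible at $\overline{\gamma}_{-n+1}$ as well). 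Consequently the modification can always be performed non-zero to non-zero, fixing the projection, and on the return every edge either has an endpoint in $\overline{I}_c$ (hence cannot flip bit $r$ and, lifted, cannot touch it) or can be lifted by a generator chosen with $\rho$-part fixing the letter in position $r$. With this observation both your criterion and your converse construction go through; without it, neither of the two load-bearing steps is actually proved.
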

\begin{proof}
This is an elementary consequence of the definitions and of Remark~\ref{R: visible}. \qedhere

\end{proof}
 Let $\overline{I}\subset \overline{\Gamma}$. We say that a point $\overline{\gamma}\in \overline{I}$ is a \emph{root} for $\overline{I}$ if the position of its first non-zero bit, say $j$, is maximal among all points of $\overline{I}$. 
We further say that a point of $\overline{I}$ is an \emph{anti-root} if the position of its first non-zero bit is exactly $j-1$. This terminology is inspired by \cite{Amir-Virag:positivespeed}.

 \begin{remark}\label{R: root}
 It follows from the definition of the Gray code line $\overline{\Gamma}$ that every connected segment has at least one and at most two roots, and if there are two distinct ones, they are neighbours (this follows from the fact that between any two non-neighboring points of $\oGamma$ beginning with a sequence of $j$ zeros there is at least a point beginning with a longer sequence of zeros).
 It could be that $\overline{I}$ has no anti-roots.
\end{remark}

\begin{lemma}\label{L: preimage root}
Let $(I, \gamma)$ be a Gray code piece, and $\overline{\rho}\in \overline{I}$ be a root. Then $p^{-1}(\overline{\rho})\cap I$ is a connected sub-graph of $I$.
\end{lemma}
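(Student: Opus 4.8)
The plan is to use the root hypothesis to pin down a common ``tail'' for all vertices of $I$, and then to realise the connectivity of the fibre by a single element of $B$. Write $j$ for the position of the first non-zero bit of $\overline{\rho}$; by definition of a root, $j$ is maximal among all points of $\overline{I}$, so every vertex $\delta$ of $I$ has its projection $\overline\delta\in\overline I$ with first non-zero bit at some position $\le j$.

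The first step is the claim that any two vertices of $I$ agree at every position $\ge j+2$. I would prove this by taking a path inside $I$ joining two given vertices and inspecting it one edge at a time. Each edge applies a generator $s\in S=A\cup B$ to the current vertex $\delta$, and by Remark~\ref{R: visible} $s$ can only alter a bit that is visible in $\delta$: the first bit if $s\in A$, or the first non-zero bit of $\delta$ and the bit following it if $s\in B$. Since every vertex of the path lies in $I$ and hence projects into $\overline I$, its first non-zero bit sits at position $\le j$, so no edge of the path ever touches a position $\ge j+2$. As $I$ is connected, this proves the claim. This is the step where the root hypothesis is essential, and I expect it to be the main (indeed the only) conceptual point: maximality of $j$ is exactly what keeps the window of $B$-visibility from reaching beyond position $j+1$ while the projection stays inside $\overline I$.

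Now take $\gamma',\gamma''\in p^{-1}(\overline\rho)\cap I$. Both have first non-zero bit exactly at position $j$, hence agree at all positions $<j$ (they are zero) and, by the previous step, at all positions $\ge j+2$; so they can differ only at positions $j$ and $j+1$. Writing the letters of $\gamma'$ there as $(a,b)$ and those of $\gamma''$ as $(a',b')$, with $a,a'\ne 0$ and with $b,b'$ both zero or both non-zero according to $\overline\rho$, it remains to pass from $(a,b)$ to $(a',b')$ by one edge that stays in the fibre.

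For this I would invoke the structure $B\cong A_d\wr_{\{1,\dots,d-1\}}A_{d-1}$. Applying $g=(g,\sigma_1,\dots,\sigma_{d-1})\rho\in B$ to $\gamma'$ recurses through the leading zeros (since $\rho$ fixes $0$) and transforms the pair $(a,b)$ into $(\rho(a),\sigma_a(b))$, leaving all positions $<j$ and $>j+1$ untouched and keeping the first non-zero bit at $j$. Because $d\ge 5$, $A_{d-1}$ is transitive on $\{1,\dots,d-1\}$ and $A_d$ is transitive on $\{0,\dots,d-1\}$, so I can choose $\rho$ with $\rho(a)=a'$ and the alternating permutation acting on position $j+1$ so that it sends $b$ to $b'$ (taking it to fix $0$ in the case $b=b'=0$). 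The resulting $g$ satisfies $g\gamma'=\gamma''$, and since $\gamma''$ projects into $\overline I$ and is adjacent in $\Gamma$ to $\gamma'\in I$, it lies in $I$, with the connecting edge contained in $p^{-1}(\overline\rho)$. Hence $p^{-1}(\overline\rho)\cap I$ is connected. Finally, when the first non-zero bit of $\overline\rho$ occurs at an infinite position $\omega$, the same argument applies verbatim: there are then no positions beyond $\omega+1$, so the first step gives the conclusion at once, and the action of the eventual section $g|_\rho\in B$ on the letters at infinity (Definition~\ref{D: tilde X}) plays the role of the move above.
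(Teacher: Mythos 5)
Your proof is correct and follows essentially the same route as the paper's: use maximality of the root position $j$ together with Remark~\ref{R: visible} to show that vertices in the fibre can differ only in positions $j$ and $j+1$, and then connect any two of them by a single edge labelled by a generator in $B$. You merely fill in details the paper leaves implicit, namely the explicit wreath-recursion construction of the connecting element of $B$ and the case where the root's first non-zero bit sits at infinity.
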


\begin{proof}
Let $j$ be the position of the first '$\ast$' bit in $\overline{\rho}$. We claim that any two sequences $\rho, \rho'\in p^{-1}(\overline{\rho})\cap I$ can only differ in the $j$th bit, and in the $j+1$th bit if the latter is non-zero.  Assume that they differ in the $r$th bit with $r>j+1$. Then this bit must be visible in $\overline{I}$. This implies that there is a sequence in $\overline{I}$ whose first $'\ast'$ bit is at position $>j$, contradicting the definition of a root. This is enough, since then $\rho$ and $\rho'$ are connected by an edge corresponding to a generator in $B$.

\end{proof}

\begin{defin}
Let $(I, \gamma)\subset \Gamma$ be a central Gray code piece with $\overline{I}=[\overline{\gamma}_{-n}, \cdots, \overline{\gamma}_0, \cdots \gamma_n]$. We say that $(I, \gamma)$ is a \emph{quasi-level} if the roots of $\overline{I}$  are contained in the two leftmost vertices $\{\overline{\gamma}_{-n}, \overline{\gamma}_{-n+1}\}$, its antiroots are contained in the two rightmost vertices  $\{\overline{\gamma}_{n-1}, \overline{\gamma}_{n}\}$, and there is at least one anti-root, or if the symmetric situation (exchanging left and right) occurs.

A quasi-level has \emph{depth} $j$, where $j$ is the position of the first non-zero bit in a root.

\end{defin}

The reason for the name is that a quasi-level is essentially isomorphic the the finite Schreier graph for the action of $M$ on the $j$-th level of the tree, up to a bounded number of edges close to the left and right extremities.
\begin{lemma}
If $(I, \gamma)$ bi-branches, then it is a quasi-level.
\end{lemma}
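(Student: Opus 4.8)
The plan is to translate the hypothesis of bi-branching into purely combinatorial conditions via Lemma~\ref{L: branching and visible}, and then to analyze the ``first-$\ast$ profile'' along the Gray code line $\oGamma$ in order to locate the roots and antiroots of $\overline{I}$. Throughout I write $f(\overline{\gamma})$ for the position of the first $\ast$ bit of $\overline{\gamma}$, so that a bit at position $r$ is visible in $\overline{\gamma}$ precisely when $r=1$ or $r\in\{f(\overline{\gamma}),f(\overline{\gamma})+1\}$.

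First I would record what each branching gives. By Lemma~\ref{L: branching and visible}, branching at the left produces a position $r_L$ and branching at the right a position $r_R$, each satisfying the three listed properties; since $\overline{I}_l\setminus\overline{I}_c=\{\overline{\gamma}_{-n},\overline{\gamma}_{-n+1}\}$ and $\overline{I}_r\setminus\overline{I}_c=\{\overline{\gamma}_{n-1},\overline{\gamma}_n\}$, the $B$-visibility witnesses lie among the two extreme vertices on the corresponding side. Rephrasing in terms of $f$: property (ii) (not visible in $\overline{I}_c$) says $r\neq 1$ and $f(\overline{\gamma})\notin\{r-1,r\}$ for every center vertex; property (iii) (non-zero in $\overline{I}_c$) then forces some center vertex to carry an $\ast$ at position $r$ with $f\le r-2$ there; and property (i) ($B$-visible at an extreme vertex) says that one of the two leftmost (resp.\ rightmost) vertices has $f\in\{r-1,r\}$. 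In particular, at the relevant extreme vertex the first $\ast$ is strictly deeper than at the witnessing center vertex.

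The second step is to exploit the dynamics of $f$ along $\oGamma$. The crucial observation is that a type-$B$ move flips only the bit immediately following the first $\ast$ and therefore leaves $f$ unchanged, whereas a type-$A$ move flips the first bit and hence toggles $f$ between the value $1$ and some value $>1$ (the position of the next $\ast$). Since the edges of $\oGamma$ alternate between type $A$ and type $B$, the profile $(f_i)$ is constant on each $B$-pair of consecutive vertices, and the value $1$ alternates with values $>1$ across $A$-edges; this rigidity is exactly what underlies Remark~\ref{R: root}, namely that $\overline{I}$ has at most two roots and that two roots are adjacent (joined by a $B$-edge). Writing $j=\max_i f_i$ for the depth attained at the root(s), I would use Lemma~\ref{L: preimage root} to control the fibre over a root. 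The heart of the argument is then a case analysis: using the rephrasing of (i)--(iii) together with the toggling structure, one shows that each branching forces the deepest first-$\ast$ on its side of $\overline{I}$ to be attained among the two extreme vertices, with value $r_L$ or $r_L-1$ on the left (resp.\ $r_R$ or $r_R-1$ on the right). Since $\overline{I}$ admits at most two adjacent roots, the global maximum $j$ cannot be realized near both extremities of a long piece; hence one extreme carries the root(s), of depth $j$, while the branching at the opposite extreme is produced by a vertex of depth $j-1$, i.e.\ an antiroot. This is precisely the definition of a quasi-level of depth $j$, and the presence of at least one antiroot is guaranteed by the branching occurring at the end not containing the root.

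The main obstacle I anticipate is exactly this final case analysis. One must rule out the configurations in which a branch is produced by a deep $\ast$ lying in the \emph{interior} of $\overline{I}$ rather than at an extremity, and in particular the seemingly dangerous scenario of a root in the interior flanked by antiroots at both ends. Handling this requires careful bookkeeping of which type-$A$ and type-$B$ edges occur in the two-vertex boundary layers $\overline{I}_l\setminus\overline{I}_c$ and $\overline{I}_r\setminus\overline{I}_c$, using that $f$ can only be raised above $r-2$ through a type-$A$ move emanating from a vertex with $f=1$, so that reaching depth $\ge r-1$ within two steps of an extremity constrains the shape of the sequence sharply. The adjacency-of-roots statement from Remark~\ref{R: root} is what ultimately separates the two halves into a root end and an antiroot end and thereby forces the quasi-level structure.
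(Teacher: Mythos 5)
Your setup is sound: the rephrasing of Lemma~\ref{L: branching and visible} in terms of the first-$\ast$ position $f$ is correct, as is the observation that type-$B$ edges preserve $f$ while type-$A$ edges toggle it against the value $1$. But the proposal stops exactly where the proof has to start. The two decisive claims are deferred to an unexecuted ``case analysis'' that you yourself flag as the main obstacle: (1) that branching pushes the deep vertices to the extremities of $\overline{I}$, i.e.\ no root lies in $\overline{I}_c$; and (2) that the branching witness position at the end opposite the roots equals exactly $j$, so that the witness there has $f=j-1$ and is an antiroot. Moreover, the mechanism you propose for (1) --- ``$f$ can only be raised above $r-2$ through a type-$A$ move emanating from a vertex with $f=1$'' --- does not suffice: a center vertex with very large $f$ is perfectly compatible with the toggling structure, since nothing in that structure forbids such a type-$A$ move inside $\overline{I}_c$. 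What actually kills it is Remark~\ref{R: visible} applied to the \emph{fixed} position $r_L$ rather than to $f$: a bit can change along an edge only if it is visible at the endpoints of that edge, so non-visibility of $r_L$ in $\overline{I}_c$ (your property (ii)) forces the bit at position $r_L$ to be constant along $\overline{I}_c$, hence constantly $\ast$ there by property (iii); therefore $f\le r_L-2$ on all of $\overline{I}_c$, and the roots must sit among the four extreme vertices. (Interestingly, this purely combinatorial route would bypass Lemma~\ref{L: preimage root}, which is precisely what the paper uses for this step via the connectivity of branches and of the fibre over a root.)

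Even granting (1), claim (2) is not a formality and is missing: one must rule out $r_R>j$ (then the right witness would have $f\ge r_R-1\ge j$ and be a root at the wrong end) and $r_R<j$ (then the bit at $r_R$ is $0$ at the root but $\ast$ on $\overline{I}_c$, so it must switch along one of the two leftmost edges, making $r_R$ visible either at a vertex of $\overline{I}_c$, contradicting (ii), or at the root, where $f=j>r_R$, also a contradiction). Finally, the definition of a quasi-level requires that \emph{all} antiroots lie in the two rightmost vertices, not merely that one exists there; your proposal never addresses this, and it needs the closing remark that an antiroot in $\overline{I}_c$ would make position $j=r_R$ visible in $\overline{I}_c$, again contradicting (ii). As it stands, the proposal is a correct plan whose heart --- exactly the content of the paper's proof --- is missing.
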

\begin{proof}
Assume that $(I, \gamma)$ bi-branches. Let $\overline{\rho}\in \overline{I}$ be a root.  By Lemma~\ref{L: preimage root} and the fact that $(I, \gamma)$ bi-branches, we conclude that $\overline{\rho}\notin \overline{I}_c$. Hence, by Remark~\ref{R: root} and up to exchanging left and right, we may suppose that all  roots are contained in $\overline{I}_l\setminus \overline{I}_c$. 
We need to check that there is at least one anti-root and that anti-roots belong to $\overline{I}_r\setminus \overline{I}_c$.
Let $j$ be the position of the first $\ast$ in a root.
By Lemma~\ref{L: branching and visible}, there is at least a visible bit in $\overline{I}_r\setminus \overline{I}_c$ which is not visible in $\overline{I}_c$ and is $\ast$ in at least one point of $\overline{I}_c$. We claim that such a bit is necessarily at position $j$. Let $r$ be its position. Assume that $r>j$. Then it is preceded either by the prefix $0^{r-1}$ or by the prefix $0^{r-2}\ast$. In both cases the first $\ast$-bit in the corresponding sequence is at position $\geq j$, contradicting the fact that the roots are contained in the two left-most vertices. Assume that $r<j$. Since the bit at position $r$ is zero in the root and $\ast$ in a point in $\overline{I}_c$, there is an edge in $\overline{I}_c$ where it switches from $0$ to $\ast$. But then at this point it is also be visible, contradiction.
It follows that there is a point in $\overline{I}_r\setminus\overline{I}_c$ that has a visible bit at position $j$. Since this point is not a root, it also has a $\ast$ at position $j-1$. It follows that it is an anti-root.
If there was another anti-root within $\overline{I}_c$, the bit at position $j$ would be visible in $\overline{I}_c$, contradicting the previous reasoning. \qedhere
 \end{proof}

\begin{lemma}
Let $(I, \Gamma)$ be a quasi-level. Then it is uniquely determined by its marginals.

\end{lemma}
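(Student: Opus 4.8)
The plan is to reconstruct the labelled graph $(I,\gamma)$ from the pair of marginals $(I_l,\gamma)$ and $(I_r,\gamma)$, which together cover $\overline I$ and overlap exactly over the central segment $\overline I_c$. I assume without loss of generality that the roots of $\overline I$ lie in the two leftmost vertices and the anti-roots in the two rightmost (the symmetric case is identical). First I would recover the \emph{depth} $j$: a root lies in $\overline I_l$, so it appears in the left marginal, and $j$ is the position of its first non-zero bit; by definition at least one anti-root exists and lies in $\overline I_r$. The core branch $I_c$, containing $\gamma$, is identified in both marginals through the basepoint, exactly as in the non--bi-branching case. Since $(I,\gamma)$ bi-branches, the remaining task is to match the left branches with the right branches over $\overline I_c$ and glue $I_l$ to $I_r$ accordingly.

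The key structural input is that over $\overline I_c$ the branches run as parallel strands carrying a well-defined invariant label. Indeed, $\overline I_c$ contains neither roots nor anti-roots, so by the definition of depth the position of the first $\ast$-bit is at most $j-2$ throughout $\overline I_c$. Hence, moving along $\overline I_c$ as a line, every Gray code step changes only a bit in position $\le j-1$, so the $\ast$-pattern in positions $\ge j$ is constant over $\overline I_c$; and by Remark~\ref{R: visible} the generators acting inside a branch change only bits in positions $\le j-1$, so the actual letters in positions $\ge j$ stay constant along each branch. I assign to every branch this constant suffix from position $j$ on as its \emph{label}, and I would verify, using Lemma~\ref{L: preimage root} and the quasi-level hypothesis, that each branch projects onto all of $\overline I_c$ and that distinct branches carry distinct labels.

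The matching is then obtained by propagating outward from the core branch. In $I_l$ the branches are joined through the root region, where the first $\ast$ sits in position $j$ and the action of $B$ permutes the letters in positions $j$ and $j+1$; combinatorially this determines how each branch is related to the core branch in these two positions. Symmetrically, in $I_r$ the branches are joined through the anti-root region, where $B$ acts on positions $j-1$ and $j$, so the right marginal determines the relation in positions $j-1, j$. Since the two regions overlap precisely in position $j$, propagating these relations outward from the already-identified core branch assigns to every branch a consistent set of relative coordinates, and hence produces a canonical bijection between the left branches and the right branches. Gluing $I_l$ and $I_r$ along this bijection and along the core branch reconstructs $(I,\gamma)$ using only data contained in the marginals, which is the desired conclusion.

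The main obstacle is exactly this last step: one must show that the partial relations visible from the root end (positions $j, j+1$) and from the anti-root end (positions $j-1, j$) are mutually consistent on their common position $j$ and, taken together, pin down the relative coordinates of every branch with no monodromy, so that the correspondence is a genuine bijection. This is where the quasi-level hypothesis is indispensable: forcing all roots to the extreme left and all anti-roots (at least one of which exists) to the extreme right keeps the two mixing regions disjoint and entirely contained in one marginal each, and prevents the strands from splitting or merging anywhere in the interior $\overline I_c$. Establishing these transversality and distinctness properties of the strands, via a careful inspection of the $B$-action at roots and anti-roots along the lines of Lemma~\ref{L: branching and visible} and Lemma~\ref{L: preimage root}, is the technical heart of the argument.
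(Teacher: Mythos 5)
Your preparatory observations are correct and consistent with the paper: the depth $j$ can be read off a root lying in the left marginal, the suffix from position $j$ onward is invariant along each branch over $\overline{I}_c$ (by Remark~\ref{R: visible}, since inside $\overline{I}_c$ only bits in positions $\le j-1$ are visible), and the $B$-action mixes branches only at the root region (positions $j,j+1$) and the anti-root region (positions $j-1,j$). But the proposal is not a proof: the decisive step --- that the partial relations read off at the two ends are mutually consistent and pin down a unique, monodromy-free bijection between left and right branches --- is precisely what you defer as ``the technical heart of the argument'' and never establish. A proof cannot leave its main difficulty as a declared obstacle. Moreover, at least one property you propose merely to ``verify'' is genuinely delicate: that distinct branches carry distinct labels is a connectivity statement about fibers of $p$ one scale down (connectedness of the set of admissible length-$(j-1)$ prefixes with a fixed suffix under the generator moves), and this is a statement of exactly the same nature as the branching phenomenon you are trying to control; by self-similarity it cannot simply be assumed. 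Finally, for a gluing of $I_l$ and $I_r$ to reconstruct $I$ at all, you would also need to know that every vertex of $I$ lies in $I_l\cup I_r$, which is not addressed.

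The paper closes the argument by a different, global mechanism that your strategy lacks. Instead of matching branches pairwise, it takes the root $\overline{\alpha}$ and the anti-root $\overline{\omega}$ closest to the center, notes that their fibers $p^{-1}(\overline{\alpha})$ and $p^{-1}(\overline{\omega})$ are connected (Lemma~\ref{L: preimage root}) and hence recognizable inside the respective marginals, and from these finite graphs and their positions recovers the exact value of $j$ together with the letters of $\gamma$ at positions $j$ and $j+1$. It then uses the structural fact recorded just before the lemma: a quasi-level of depth $j$ is, away from a bounded neighborhood of its extremities, a copy of the finite Schreier graph of $M$ acting on level $j$ of the tree --- indeed no bit beyond position $j+1$ is ever visible in $\overline{I}$, so all vertices of $I$ share the tail of $\gamma$ and the generators act only through the first $j+1$ letters. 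Hence this finite amount of data already determines the isomorphism class of $(I,\gamma)$, and no gluing or consistency check is needed. That identification of a quasi-level with a level Schreier graph is the missing idea in your approach; without it, or a worked-out substitute for your matching step, the argument is incomplete.
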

\begin{proof}
Let $\overline{\alpha}\in \overline{I}_l\setminus\overline{I}_c$ be the root closest to the center of $\overline{I}$, and $\overline{\omega}\in \overline{I}_r\setminus \overline{I}_c$ be the anti-root closest to the center. Then the preimages $p^{-1}(\overline{\alpha})$ and $p^{-1}(\overline{\beta})$ in $I_l$ and $I_r$ are connected graphs, and can be recognized by looking at the isomorphism classes of $I_l$ and $I_r$ only. By looking at these finite graphs and at their positions it is possible to recognize the letters at position $j$ and $j+1$ in $\gamma$ and the exact value of $j$. This information is enough to reconstruct the quasi-level.
\end{proof}

\bibliographystyle{alpha}
\bibliography{these1-12-15}
\end{document}